\documentclass[
paper=letter,%
numbers=noendperiod,%
captions=nooneline,%
DIV=10%
]{scrartcl}

\usepackage[T1]{fontenc}%
\usepackage{lmodern}%
\usepackage[american]{babel}%
\usepackage{microtype}%

\usepackage[
hyperref,%
table%
]{xcolor}%

\usepackage{scrlayer-scrpage}%

\usepackage{calc}%
\usepackage{rotating}%
\usepackage{amsmath}%
\usepackage{mathtools}%
\usepackage{amssymb}%
\usepackage{amsthm}%
\usepackage{thmtools}%
\usepackage{etoolbox}%
\usepackage{xparse}%
\usepackage{bm}%
\usepackage{bbm}%
\usepackage{enumitem}%
\usepackage[calc]{datetime2}%
\usepackage{graphicx}%
\usepackage{grffile}%
\usepackage{tikz}%
\usepackage{wrapfig}%
\usepackage{tabularx}%
\usepackage{siunitx}%
\usepackage{booktabs}%
\usepackage{multirow}%
\usepackage{vruler}%
\usepackage{fancyvrb}%
\usepackage{textcomp}%
\usepackage{listings}%
\usepackage{csquotes}%
\usepackage[
style=authoryear,%
dashed=false,%
hyperref=true,%
useprefix=true,%
maxnames=2,%
uniquelist=minyear,%
maxbibnames=6,%
uniquename=false,%
seconds=true,%
date=iso,%
urldate=iso%
]{biblatex}%
\setcounter{biburlnumpenalty}{100}%
\setcounter{biburllcpenalty}{100}%
\setcounter{biburlucpenalty}{100}%
\usepackage[
hypertexnames=false,%
setpagesize=false,%
pdfborder={0 0 0},%
pdfstartview=Fit,%
bookmarksopen=true,%
bookmarksnumbered=true%
]{hyperref}%
\usepackage[hyphens]{xurl}
\hypersetup{breaklinks=true}
\pagestyle{scrheadings}%
\setkomafont{pageheadfoot}{\normalfont\normalcolor\sffamily}%
\setkomafont{pagenumber}{\normalfont\normalcolor\sffamily}%
\automark{section}%
\setcounter{secnumdepth}{3}%
\setkomafont{captionlabel}{\normalfont\normalcolor\sffamily\bfseries}%

\newcommand*{\mysquare}{\rule[0.18em]{0.36em}{0.36em}}
\newcommand*{\mytriangle}{\raisebox{0.12em}{\resizebox{0.48em}{0.48em}{$\blacktriangleright$}}}
\newcommand*{\mybar}{\rule[0.32em]{0.62em}{0.08em}}
\newcommand*{\mydot}{\raisebox{0.14em}{\resizebox{0.44em}{!}{$\bullet$}}}
\setlist{%
  align=left,%
  labelindent=0mm, %
  leftmargin=!,%
  itemindent=0mm, %
  listparindent=\parindent,%
  parsep=0mm,%
  topsep=1mm,%
  itemsep=1mm%
}
\setlist[itemize,1]{label={\mysquare}, labelwidth=\widthof{\mysquare\ }}%
\setlist[itemize,2]{label={\mytriangle}, labelwidth=\widthof{\mytriangle\ }}%
\setlist[itemize,3]{label={\mybar}, labelwidth=\widthof{\mybar\ }}%
\setlist[itemize,4]{label={\mydot}, labelwidth=\widthof{\mydot\ }}%
\setlist[enumerate,1]{label=\arabic*), labelwidth=\widthof{9)}}%
\setlist[enumerate,2]{label=\arabic{enumi}.\arabic*), labelwidth=\widthof{9.9)}}%
\setlist[enumerate,3]{label=\arabic{enumi}.\arabic{enumii}.\arabic*), labelwidth=\widthof{9.9.9)}}%
\setlist[enumerate,4]{label=\arabic{enumi}.\arabic{enumii}.\arabic{enumiii}.\arabic*), labelwidth=\widthof{9.9.9.9)}}%

\allowdisplaybreaks%
\newcommand*{\abstractnoindent}{}%
\let\abstractnoindent\abstract
\renewcommand*{\abstract}{\let\quotation\quote\let\endquotation\endquote
  \abstractnoindent}
\deffootnote[1em]{1em}{1em}{\textsuperscript{\thefootnotemark}}%
\pdfstringdefDisableCommands{\let\bm\relax}%
\renewcommand*{\big}[1]{{\vcenter{\hbox{\scalebox{1.30}{\ensuremath#1}}}}}%

\definecolor{blue}{RGB}{58, 95, 205}%
\definecolor{red}{RGB}{205, 41, 144}%
\definecolor{orange}{RGB}{238, 118, 0}%
\definecolor{chocolate}{RGB}{205, 102, 29}%

\lstset{
  basicstyle=\upshape\ttfamily\small,%
  frame=lrtb, framerule=0pt, framexleftmargin=1pt,%
  basewidth=0.5em,%
  tabsize=4,%
  showstringspaces=false,%
  captionpos=b,%
  breaklines=true,%
  fancyvrb=true,%
  extendedchars=false,%
  rangeprefix=\#\#'\ \{\ ,%
  rangesuffix=\ \},%
  includerangemarker=false,%
  upquote=true%
}

\lstdefinestyle{input}{
  backgroundcolor=\color{black!12},%
  commentstyle=\itshape\color{black!50},%
  keywordstyle=\bfseries\color{black},%
  stringstyle=\color{black}%
}

\lstdefinestyle{output}{
  backgroundcolor=\color{black!6}%
}

\lstdefinestyle{codestyle}{
  language={},%
  keywords={},%
  otherkeywords={}%
}

\lstnewenvironment{codeinput}[1][]{%
  \lstset{style=input, style=codestyle}
  #1%
}{\vspace{-0.25\baselineskip}}%

\lstnewenvironment{codeoutput}[1][]{%
  \lstset{style=output, style=codestyle}
  #1%
}{\vspace{-0.25\baselineskip}}%

\expandafter\let\csname Sinput\endcsname\relax
\expandafter\let\csname endSinput\endcsname\relax
\expandafter\let\csname Soutput\endcsname\relax
\expandafter\let\csname endSoutput\endcsname\relax

\lstdefinestyle{Rstyle}{
  language=R,%
  keywords={},%
  otherkeywords={}%
}

\lstnewenvironment{Sinput}[1][]{%
  \lstset{style=input, style=Rstyle}
  #1%
}{\vspace{-0.25\baselineskip}}%

\lstnewenvironment{Soutput}[1][]{%
  \lstset{style=output, style=Rstyle}
  #1%
}{\vspace{-0.25\baselineskip}}%

\lstdefinestyle{Cstyle}{
  language=C,%
  keywords={},%
  otherkeywords={}%
}

\lstnewenvironment{Cinput}[1][]{%
  \lstset{style=input, style=Cstyle}
  #1%
}{\vspace{-0.25\baselineskip}}%

\lstnewenvironment{Coutput}[1][]{%
  \lstset{style=output, style=Cstyle}
  #1%
}{\vspace{-0.25\baselineskip}}%

\lstdefinestyle{Bashstyle}{
  language=bash,%
  keywords={},%
  otherkeywords={}%
}

\lstnewenvironment{Bashinput}[1][]{%
  \lstset{style=input, style=Bashstyle}
  #1%
}{\vspace{-0.25\baselineskip}}%

\lstnewenvironment{Bashoutput}[1][]{%
  \lstset{style=output, style=Bashstyle}
  #1%
}{\vspace{-0.25\baselineskip}}%

\lstdefinestyle{LaTeXstyle}{
  language=[LaTeX]TeX,%
  texcs={},%
  keywords={},%
  otherkeywords={}%
}

\lstnewenvironment{LaTeXinput}[1][]{%
  \lstset{style=input, style=LaTeXstyle}
  #1%
}{\vspace{-0.25\baselineskip}}%

\lstnewenvironment{LaTeXoutput}[1][]{%
  \lstset{style=output, style=LaTeXstyle}
  #1%
}{\vspace{-0.25\baselineskip}}%

\setlength{\bibhang}{1em}%
\DeclareNameAlias{sortname}{family-given}%
\DefineBibliographyExtras{american}{\DeclareQuotePunctuation{}}%
\renewbibmacro*{volume+number+eid}{%
  \setunit*{\addcomma\space}%
  \printfield{volume}%
  \printfield{number}}
\DeclareFieldFormat*{number}{(#1)}
\DeclareFieldFormat*{title}{#1}%
\DeclareFieldFormat{doi}{%
  \ifhyperref
    {\href{http://dx.doi.org/#1}{\nolinkurl{doi:#1}}}%
    {\nolinkurl{doi:#1}}}%
\renewbibmacro*{in:}{}%
\DeclareFieldFormat{isbn}{ISBN #1}%
\DeclareFieldFormat{pages}{#1}%
\DeclareFieldFormat{url}{\url{#1}}%
\DeclareFieldFormat{urldate}{\mkbibparens{#1}}%
\addbibresource{paper.bib}%
\renewcommand*{\cite}[2][]{\textcite[#1]{#2}}%

\newif\ifstarttheorem
\declaretheoremstyle[%
  spaceabove=0.5em,
  spacebelow=0.5em,
  headfont=\sffamily\bfseries\global\starttheoremtrue,
  notefont=\sffamily\bfseries,
  notebraces={(}{)},
  headpunct={},
  bodyfont=\normalfont,
  postheadspace=\newline%
]{myMainStyle}
\declaretheorem[style=myMainStyle, numberwithin=section]{definition}%
\declaretheorem[style=myMainStyle, sibling=definition]{proposition}
\declaretheorem[style=myMainStyle, sibling=definition]{lemma}
\declaretheorem[style=myMainStyle, sibling=definition]{theorem}
\declaretheorem[style=myMainStyle, sibling=definition]{corollary}
\declaretheorem[style=myMainStyle, sibling=definition]{remark}
\declaretheorem[style=myMainStyle, sibling=definition]{example}

\makeatletter
\preto\itemize{%
  \if@inlabel
    \ifstarttheorem
      \mbox{}\par\nobreak\vskip\glueexpr-\parskip-\baselineskip+0.25em\relax\hrule\@height\z@
    \fi%
  \fi%
  \global\starttheoremfalse%
 \def\tempa{proof}%
 \ifx\tempa\mycurrenvir
    \ifstarttheorem
      \mbox{}\par\nobreak\vskip\glueexpr-\parskip-\baselineskip+0.25em\relax\hrule\@height\z@
    \fi%
 \fi%
 \global\starttheoremfalse%
}
\preto\enditemize{\global\starttheoremfalse}
\makeatother

\makeatletter
\preto\enumerate{%
  \if@inlabel
    \ifstarttheorem
      \mbox{}\par\nobreak\vskip\glueexpr-\parskip-\baselineskip+0.25em\relax\hrule\@height\z@
    \fi%
  \fi%
  \global\starttheoremfalse%
 \def\tempa{proof}%
 \ifx\tempa\mycurrenvir
    \ifstarttheorem
      \mbox{}\par\nobreak\vskip\glueexpr-\parskip-\baselineskip+0.25em\relax\hrule\@height\z@
    \fi%
 \fi%
 \global\starttheoremfalse%
}
\preto\endenumerate{\global\starttheoremfalse}
\makeatother

\makeatletter
\NewDocumentCommand{\tmb}{O{0.1mm} O{0.1mm} O{0.88} m m m}{%
  \mathrel{%
    \vbox{\offinterlineskip\m@th
      \ialign{%
        \hfil##\hfil\cr
        $\scriptscriptstyle\text{\scalebox{#3}{#4}}\mathstrut$\cr%
        \noalign{\vspace{#1}}%
        \vtop{%
          \ialign{%
            \hfil##\hfil\cr
            $#5$\cr\noalign{\vspace{#2}}%
            $\scriptscriptstyle\text{\scalebox{#3}{#6}}\mathstrut$\cr%
          }%
        }\cr
      }%
    }%
  }%
}
\makeatother

\makeatletter
\NewDocumentCommand{\tmbc}{O{0.1mm} O{0.1mm} O{0.88} m m m}{%
  \mathrel{%
    \vbox{\offinterlineskip\m@th
      \ialign{%
        \hfil##\hfil\cr
        $\scriptscriptstyle\mathclap{\text{\scalebox{#3}{#4}}}\mathstrut$\cr%
        \noalign{\vspace{#1}}%
        \vtop{%
          \ialign{%
            \hfil##\hfil\cr
            $#5$\cr\noalign{\vspace{#2}}%
            $\scriptscriptstyle\mathclap{\text{\scalebox{#3}{#6}}}\mathstrut$\cr%
          }%
        }\cr
      }%
    }%
  }%
}
\makeatother

\usetikzlibrary{shapes.geometric}

\def\IR{\mathbb{R}}
\def\IN{\mathbb{N}}
\def\Prob{\mathbb{P}}
\def\E{\mathbb{E}}

\def\bfY{\bm{Y}}

\def\d{\,\mathrm{d}}

\newcommand{\abs}[1]{|#1|}

\newcommand{\ceil}[1]{\left\lceil#1\right\rceil}

\def\gen{\psi}
\def\invgen{\gen^{-1}}

\def\Copula{C}

\def\indist{\overset{\text{\tiny d}}{\longrightarrow}}

\newcommand{\eqindist}{\stackrel{\mbox{\tiny d}}{=}}

\def\cn{c}
\def\dn{d}
\def\an{\cn^*}
\def\bn{\dn^*}

\def\pF{F}
\def\pMargin{\pF}

\def\pG{G}
\def\pD{D}
\def\dD{d}
\def\qD{\pD^{-1}}

\def\EVDist{H}

\def\EVI{\xi}

\def\STDF{\ell}

\def\JointDist{\pMargin}

\newcommand*{\U}{\operatorname{U}}

\newcommand*{\I}{\mathds{1}}

\DeclareMathOperator{\RV}{RV}
\DeclareMathOperator{\MDA}{MDA}
\DeclareMathOperator{\cov}{cov}

\def\STDFone{\eta}
\def\Cdiag{\delta}
\def\Normal{\mathrm{N}}

\DeclareMathOperator{\opFN}{FN}
\def\FN{\opFN}
\def\rate{r}
\def\iidrate{\beta^*}

\DeclareMathOperator{\GEV}{GEV}

\usepackage{dsfont}

\def\opCondD{\mathcal{D}}
\def\CondDu{\opCondD(u_n)}

\newcommand\ct[1]{\text{\rmfamily\upshape #1}}

\def\e{\ct{e}}

\begin{document}
\thispagestyle{plain}
\begin{center}
  \sffamily
  {\bfseries\LARGE Limiting Behavior of Maxima under Dependence\par}
  \bigskip\smallskip
  {\Large Klaus Herrmann\footnote{D\'epartement de Math\'ematiques, Universit\'e de Sherbrooke, \href{mailto:klaus.herrmann@usherbrooke.ca}{\nolinkurl{klaus.herrmann@usherbrooke.ca}}}, Marius Hofert\footnote{Department of Statistics and Actuarial Science, The University of Hong Kong, \href{mailto:mhofert@hku.hk}{\nolinkurl{mhofert@hku.hk}}}, Johanna G.\ Ne\v{s}lehov\'a\footnote{Department of Mathematics and Statistics, McGill University, \href{mailto:johanna.neslehova@mcgill.ca}{\nolinkurl{johanna.neslehova@mcgill.ca}}}%
    \par
    \bigskip
    \today\par}
\end{center}
\par\smallskip
\begin{abstract}
  Weak convergence of maxima of dependent sequences of identically distributed
  continuous random variables is studied under normalizing sequences arising as
  subsequences of the normalizing sequences from an associated
  iid sequence. This general framework allows one to derive several
  generalizations of the well-known Fisher--Tippett--Gnedenko theorem under
  conditions on the univariate marginal distribution and the dependence
  structure of the sequence. The limiting distributions are shown to be
  compositions of a generalized extreme value distribution and a
  distortion function which reflects the limiting behavior of the diagonal of
  the underlying copula. Uniform convergence rates for the weak convergence
  to the limiting distribution are also derived. Examples covering
  well-known dependence structures are provided. Several existing results, e.g.\
  for exchangeable sequences or stationary time series, are embedded in the
  proposed framework.
\end{abstract}
\minisec{Keywords}
copula,
dependent sequences,
distortion function,
extremes,
maxima,
time series,
uniform convergence rate
\minisec{MSC2010}
60G70, 62H05%

\section{Introduction}
Consider a sequence $(X_i, i\in\IN)$ (in short: $(X_i)$) of random variables
with common distribution function $\pMargin$, i.e.\
$X_1,X_2,\ldots\sim\pMargin$. The purpose of this article is to study the
limiting behavior of the maximum of the first $n$ variables of this sequence
under suitable normalization. That is, we seek to identify conditions under
which there exists a nondegenerate distribution function $\pG$ and normalizing
sequences $(\cn_n)$, $\cn_n>0$, and $(\dn_n)$ of constants such that
$M_n=\max(X_1,\dots,X_n)$ satisfies the convergence in distribution
\begin{align}\label{eq:central}
  \frac{M_n - \dn_n}{\cn_n} \indist \pG.
\end{align}
If \eqref{eq:central} holds, we say that $\pMargin$ is in the maximum domain of
attraction of $\pG$, denoted by $\pMargin\in\MDA(\pG)$. Our further interest
lies in identifying the limit $\pG$. Answering these questions is essential for applications that require
the extrapolation into the tail of the unknown underlying distribution function
$\pMargin$; this is a cornerstone of extreme value analysis as described in
standard textbooks such as
\cite{BeirlantGoegebeurTeugelsSegers2004,deHaanFerreira2006,EmbrechtsKluppelbergMikosch1997,Resnick1987}.

  When the random variables $X_i$, $i \in \IN$, are iid the problem has been
solved in \cite{FisherTippett1928} and \cite{Gnedenko1943}.  The
  so-called Fisher--Tippett--Gnedenko theorem, given, e.g.\ in \cite[Proposition~0.3]{Resnick1987}, states that if
  $\pMargin\in\MDA(\pG)$, then $\pG$ necessarily belongs to the class of
  generalized extreme value distributions, denoted by $\pG\in
  \GEV$. Distribution functions in the $\GEV$ class have
  parameters $\xi,\mu\in\IR$ and $\sigma>0$, and they are given by
  \begin{align*}
    \EVDist_{\EVI,\mu,\sigma}(x) = \begin{cases}
                                     \exp(-(1+\xi (x-\mu)/\sigma)^{-1/\xi}),&\text{if}\,\ \xi\neq 0,\\
                                     \exp(-\e^{-(x-\mu)/\sigma}),&\text{if}\,\ \xi=0,
                                   \end{cases}
  \end{align*}
  for all $x \in \mathbb{R}$ with $1+\xi (x-\mu)/\sigma>0$ and by the limiting
  values $\EVDist_{\EVI,\mu,\sigma}(x)=0$ or $\EVDist_{\EVI,\mu,\sigma}(x)=1$
  otherwise. Results about the construction of the normalizing sequences in the
  iid case are also available, see, e.g.\
  \cite{Resnick1987,EmbrechtsKluppelbergMikosch1997,deHaanFerreira2006}.

  Our main contribution is to identify the limiting
  behavior of appropriately centered and scaled maxima of $X_1,X_2,\ldots\sim F$
  under dependence. We derive conditions on $\pMargin$ and on the dependence
  structure of $(X_i)$ under which the limiting distribution $\pG$ in \eqref{eq:central} is
  nondegenerate and are able to identify its form, thereby generalizing the
  Fisher--Tippett--Gnedenko theorem to the case of dependent sequences.  In
  particular, we will show that $\pG$ is a
  composition of a generalized extreme value distribution and a univariate
  function driven by the
  dependence structure of the sequence $(X_i)$.

  Maxima of dependent sequences are of particular interest in actuarial science,
  where the principal idea of insurable exposure is that a large number of
  similar risks are pooled into homogeneous portfolios; see, e.g.\
  \cite[Chapter~2]{MehrCammack1980}. The behavior of the largest loss in such a
  portfolio then plays a key role for risk assessment. In such situations and
  with a large portfolio approximation in mind, one often assumes that risks
  are identically distributed. Also, such risks are typically dependent due
  to common factors affecting all risks.

  The classical univariate Fisher--Tippett--Gnedenko theorem was extended early
  on to incorporate specific types of dependence between the random variables
  $X_i$, $i \in \IN$. Berman \cite{Berman1962b} derived general limiting results
  for maxima of exchangeable sequences.  The special case where the dependence
  between the random variables of the exchangeable sequence is given by an
  Archimedean copula was considered in \cite{Ballerini1994b,Wuthrich2004};
  extremal properties under Archimedean and related dependence structures were
  crucial in modeling collateralized debt obligations, e.g.\ in
  \cite{SchoenbucherSchubert2001} and \cite{hofertscherer2011}.  Almost sure
  limit theorems for maxima under Archimedean dependence were recently studied
  in \cite{DudzinskiFurmanczyk2017}, while \cite{HuangNorthZewotir2017} proposed
  adaptations of the block maxima and peaks-over-threshold methods for
  exchangeable sequences.

  Furthermore, an extensive body of literature exists when $(X_i)$ is a
  stationary time series. The early work of \cite{Watson1954} under
  $m$-dependence, and \cite{Loynes1965} under $\alpha$-mixing was substantially
  extended and refined in \cite{Leadbetter1974} and \cite{Leadbetter1983},
  culminating in \cite{LeadbetterLindgrenRootzen1983}. Numerous statistical
  inference procedures for tail extrapolation in the context of stationary time
  series have also been developed, see e.g.\
  \cite{BeirlantGoegebeurTeugelsSegers2004, FawcettWalsh2012}; a recent review
  with extensions is provided by
  \cite{BuriticaMeyerMikoschWintenberger2021}. Alternatively,
  \cite{FerreiraFerreira2018} consider a local dependence condition for
  stationary time series, while \cite{deHaanMercadierZhou2016} and
  \cite{ChavezGuillou2018} work under a $\beta$-mixing
  framework. %
  Finally, \cite{RussellHuang2021} use a bivariate Gumbel copula to model the
  dependence between consecutive block maxima in a first order Markov
  process.%

Contrary to the existing literature, we study the limiting behavior of $M_n$ in general, i.e.\ without assuming exchangeability or stationarity. This context is akin to the order statistics literature, which studies the distribution of maxima for various multivariate distributions in finite samples, see e.g.\ \cite{BalakrishnanBendreMalik1992,Rychlik1994,ArellanoValleGenton2008}.
We only require throughout the classical assumption that the common distribution
function $\pMargin$ be continuous. The dependence between the random variables
$(X_i, i \in \IN)$ can then be uniquely described using copulas following
Sklar's theorem \cite{Sklar1959}. Specifically, for each $n\in\IN$, there exists
a unique copula $C_n$, i.e.\ a distribution function on $[0,1]^n$ with standard
uniform univariate margins, such that
$\Prob(X_1\le x_1,\dots, X_n\le x_n)=C_n(\pMargin(x_1),\dots,\pMargin(x_n))$,
$(x_1,\dots,x_n)\in\IR^n$. With this representation, we obtain
$\Prob(M_n\le x) = \Cdiag_n(\pMargin(x))$ where $\Cdiag_n(u)=C_n(u,\dots,u)$,
$u\in[0,1]$, is the \emph{diagonal} of $C_n$. As we show in
Section~\ref{section_extremes_and_copula_diagonals}, the limiting behavior of
$M_n$ under dependence is determined by the tail behavior of $\pMargin$ and its
interplay with the properties of the copula diagonal $\delta_n$.  Investigating
this relationship allows us to derive a rigorous theoretical framework for the
study of the limiting behavior of maxima under dependence.

Our first main result (Theorem~\ref{theorem_continuous_limit}) provides
conditions under which maxima of identically distributed dependent random
variables $(X_i)$ can be normalized using the same sequences $(\cn_n)$,
$\cn_n>0$, and $(\dn_n)$ of normalizing constants as in the iid case. Our second
main result (Theorem~\ref{theorem-FTG2}) is then a generalization of the
Fisher--Tippett--Gnedenko theorem under dependence. In both cases the limiting
distribution of appropriately stabilized maxima is of the form
$\pD\circ \EVDist$ for $\EVDist\in\GEV$ and $\pD$ being a \emph{distortion
  function}, i.e.\ a distribution function satisfying $\pD(0) = 0$ and
$\pD(1) = 1$. Uniform convergence rates for the weak convergence of normalized
maxima to the limit $\pD\circ \EVDist$ are derived in
Section~\ref{sec:unif:conv:rate}(Theorem~\ref{theorem_convergence_rate}).

The results from Section~\ref{section_extremes_and_copula_diagonals}
are illustrated through various examples. In Section~\ref{sec:3}, we first
consider so-called power diagonals which still lead to limiting distributions of
maxima that are generalized extreme value. We also establish connections between
our findings and well-known results about extremes of stationary time series
with restricted short-range dependence at extreme levels. Complementing these
results, we also provide examples that illustrate how our methodology can be
used for time series with long-range dependence. Section~\ref{sec:examples} then
presents examples where the limiting distribution of maxima is no longer
generalized extreme value. Notably, we explore the case where $\Copula_n$ is
Archimedean or Archimax and provide a generalization of multiplicative frailty
models linked to the Fr\'echet domain of attraction. Conclusions are given in
Section~\ref{section_conclusion}. Proofs of all results are provided in the
Supplementary Material \cite{HerrmannHofertNeslehova2024}.

\section{Weak convergence of maxima of dependent sequences}\label{section_extremes_and_copula_diagonals}
In this section, we derive a generalization of the Fisher--Tippett--Gnedenko
theorem when $X_1,X_2, \ldots \sim \pMargin$ are identically
  distributed but dependent. As stated before, we
  assume that $\pMargin$ is continuous and denote by $C_n$ and $\Cdiag_n$ the unique copula of
  $(X_1,\ldots, X_n)$ and its diagonal, respectively.

For a fixed $n$, general properties of copula diagonals have
  been studied, e.g., by \cite{Jaworski2009}; the connection between the
  maximum $M_n$ of $X_1,\ldots, X_n$ and $\delta_n$ is also made in
  \cite{HofertMachlerMcNeil2013} who use it to construct a maximum
  likelihood estimator for the parameters of Archimedean
  copula models. In order to exploit the
latter connection in an asymptotic context when $n \to \infty$, we first
define the so-called diagonal power distortion of $\Cdiag_n$.%
\begin{definition}%
\label{def:Dnr} Let
  $X_1,X_2,\ldots\sim\pMargin$ and $\Copula_n$ be the unique copula of
  $(X_1,\ldots,X_n)$ with diagonal $\Cdiag_n$. Let the rate
  $\rate \colon \IN \to (0,\infty)$ be an arbitrary strictly positive
  function; for simplicity, we write $\rate_n$
  for $\rate(n)$. The \emph{diagonal power distortion} with respect to the
  rate $\rate$ is $\pD_n^{\rate} \colon [0,1]\to [0,1]$ given, for all $u \in [0,1]$, by
  $\pD^{\rate}_n(u) = \Cdiag_n(u^{1/\rate_n})$.
\end{definition}
Because $C_n$ is componentwise non-decreasing and uniformly continuous \cite{Nelsen2006}, $\pD^{\rate}_n$ is a continuous distribution
function on $[0,1]$ for any positive rate function $\rate$. %
We begin our investigation towards a generalization of the Fisher--Tippett--Gnedenko theorem for dependent sequences  by describing the
precise impact of $\delta_n$ on the limiting distribution of maxima.
\begin{theorem}%
  \label{theorem_continuous_limit} %
  Consider $X_1,X_2,\ldots\sim\pMargin$ and
  let %
  $M_n^*=\max(X_1^*,\ldots, X_n^*)$ denote the maximum corresponding to an iid
  sequence $X_1^*,X_2^*,\ldots \sim\pMargin$.  Assume that
  $\pMargin\in\MDA(\EVDist)$ for a $\GEV$ distribution $H$ with normalizing
  sequences $(\bn_n)$ and $(\an_n)$, $\an_n > 0$, i.e.\ for any $x \in \IR$,
  $\lim_{n\to\infty} \Prob\left(\frac{M_n^*-\bn_n}{\an_n} \leq x\right) = \EVDist(x)$.
  \begin{enumerate}[label=(\roman*), labelwidth=\widthof{(iii)}]
  \item\label{mainThm:1}
    If there exists a function $\rate \colon \IN \to (0,\infty)$ such that $\lim_{n\to\infty} \rate_n = \infty$ and $\pD^{\rate}_n$ converges pointwise to a continuous function $\pD$ on $[0,1]$, then, for all $x \in \mathbb{R}$,
    \begin{align}
      \lim_{n\to\infty} \Prob\left(\frac{M_n - \bn_{\ceil{\rate_n}}}{\an_{\ceil{\rate_n}}} \leq x \right) = \pD \circ \EVDist (x)\label{eq_generalized_FTP}
    \end{align}
    and $\pD$ is a continuous distribution function on $[0,1]$.
  \item\label{mainThm:2}
    If there exists a function $\rate \colon \IN \to (0,\infty)$ such that
    $1/r_n = O(1/n)$ and a distribution function $\pD$ on $[0,1]$ so
    that, %
    for all continuity points $x$ of $\pD\circ\EVDist$, \eqref{eq_generalized_FTP} holds,
    then %
    $ \lim_{n\to\infty} \pD_n^{\rate}(u) = \pD(u)$
    for $u\in\{0,1\}$ and all continuity points $u\in(0,1)$ of $\pD$.
  \end{enumerate}
\end{theorem}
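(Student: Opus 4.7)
The starting observation for both parts is the identity
\[
\Prob\Bigl(\frac{M_n - \bn_{\ceil{r_n}}}{\an_{\ceil{r_n}}}\le x\Bigr)
= \Cdiag_n\bigl(\pMargin(\an_{\ceil{r_n}}x+\bn_{\ceil{r_n}})\bigr)
= \pD^{r}_n\Bigl(\pMargin(\an_{\ceil{r_n}}x+\bn_{\ceil{r_n}})^{r_n}\Bigr),
\]
which follows from Sklar together with the definition of $\pD^{r}_n$. Since $\pMargin\in\MDA(\EVDist)$ is the iid statement $\pMargin(\an_k x+\bn_k)^k\to\EVDist(x)$ with $k=\ceil{r_n}\to\infty$, and since $r_n/\ceil{r_n}\to 1$, the continuity of the map $t\mapsto t^{r_n/\ceil{r_n}}$ on $[0,1]$ yields $\pMargin(\an_{\ceil{r_n}}x+\bn_{\ceil{r_n}})^{r_n}\to \EVDist(x)$ for every $x\in\IR$. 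Call this sequence $u_n(x)$; so the task for (i) reduces to showing $\pD^{r}_n(u_n(x))\to \pD(\EVDist(x))$, and for (ii) it reduces to reading off pointwise convergence of $\pD^{r}_n$ from the known convergence of $\pD^{r}_n(u_n(x))$.

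For \textbf{(i)}, the natural tool is P\'olya's uniform convergence theorem: each $\pD^{r}_n$ is non-decreasing and right-continuous with $\pD^{r}_n(0)=\Cdiag_n(0)=0$ and $\pD^{r}_n(1)=\Cdiag_n(1)=1$, so each is a distribution function on $[0,1]$. The assumed pointwise limit $\pD$ inherits monotonicity and the boundary values $\pD(0)=0$, $\pD(1)=1$, and is continuous by hypothesis, hence a continuous distribution function on $[0,1]$; therefore $\pD^{r}_n\to\pD$ \emph{uniformly} on $[0,1]$. Combining uniform convergence with $u_n(x)\to \EVDist(x)$ and continuity of $\pD$ at $\EVDist(x)$ gives $\pD^{r}_n(u_n(x))\to \pD(\EVDist(x))$, which is the desired identity.

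For \textbf{(ii)}, note first that $1/r_n=O(1/n)$ forces $r_n\to\infty$, so the reduction to $u_n(x)\to \EVDist(x)$ still applies. The limit $\pD\circ \EVDist$ is a distribution function in $x$, so the boundary values $\pD(0)=0$ and $\pD(1)=1$ follow from letting $x$ tend to the endpoints of the $\EVDist$-support; together with $\pD^{r}_n(0)=0$ and $\pD^{r}_n(1)=1$ this settles $u\in\{0,1\}$. For a continuity point $u\in(0,1)$ of $\pD$, use the continuity and strict monotonicity of $\EVDist$ on its support to pick $x_u$ with $\EVDist(x_u)=u$ and, since $\pD$ has only countably many jumps, choose $x'<x_u<x''$ arbitrarily close to $x_u$ such that $u':=\EVDist(x')$ and $u'':=\EVDist(x'')$ are continuity points of $\pD$. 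The hypothesis yields $\pD^{r}_n(u_n(x'))\to \pD(u')$ and $\pD^{r}_n(u_n(x''))\to \pD(u'')$, while $u_n(x')\to u'<u<u''\leftarrow u_n(x'')$ ensures $u_n(x')<u<u_n(x'')$ eventually. Monotonicity of $\pD^{r}_n$ then sandwiches $\pD(u')\le\liminf \pD^{r}_n(u)\le\limsup \pD^{r}_n(u)\le \pD(u'')$, and letting $x',x''\to x_u$ along continuity points of $\pD$ gives $\pD^{r}_n(u)\to \pD(u)$.

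\textbf{Main obstacle.} The only nontrivial analytic step is the passage from pointwise convergence of $\pD^{r}_n$ to a form strong enough to handle the \emph{moving} argument $u_n(x)$ appearing in the identity; part (i) handles this via P\'olya, and part (ii) handles it via a monotonicity sandwich. A small care point throughout is the replacement of the exponent $\ceil{r_n}$ (coming from the iid normalizing constants) by $r_n$ (appearing inside $\pD^{r}_n$), which is resolved by $r_n/\ceil{r_n}\to 1$ and continuity of the power map.
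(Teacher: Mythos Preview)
Your proof of part~\ref{mainThm:1} is essentially the same as the paper's: both rewrite the probability as $\pD_n^r(u_n(x))$ with $u_n(x)=\pMargin^{r_n}(\an_{\ceil{r_n}}x+\bn_{\ceil{r_n}})\to\EVDist(x)$, upgrade the pointwise convergence $\pD_n^r\to\pD$ to uniform convergence via the P\'olya-type argument (the paper's Lemma~\ref{lemma_uniform_convergence}), and conclude by composing limits.

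Your proof of part~\ref{mainThm:2} is correct but takes a genuinely different route from the paper. The paper fixes $x$ with $\EVDist(x)=u$, sets $u_n=u_n(x)$, and controls $|\pD_n^r(u)-\pD_n^r(u_n)|$ directly via the Lipschitz property of copulas, which gives the bound $n|u^{1/r_n}-u_n^{1/r_n}|$; the mean value theorem then produces a factor $n/r_n$, and it is precisely here that the hypothesis $1/r_n=O(1/n)$ is invoked to keep this bounded. Your monotonicity sandwich bypasses this entirely: you bracket $u$ by $u_n(x')$ and $u_n(x'')$ with $\EVDist(x'),\EVDist(x'')$ continuity points of $\pD$, use only the monotonicity of $\pD_n^r$, and squeeze. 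This is more elementary and, notably, uses only $r_n\to\infty$ rather than the stronger rate condition $1/r_n=O(1/n)$; your argument therefore actually proves a sharper version of~\ref{mainThm:2}. The paper's approach, on the other hand, yields quantitative control of $|\pD_n^r(u)-\pD_n^r(u_n)|$, which is exactly what is reused later in the proof of Proposition~\ref{proposition_convergence_rate_dependence_structure} on convergence rates.
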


\begin{remark}%
  Although some of the results in Theorem~\ref{theorem_continuous_limit} can
  possibly be extended to the case when $\pMargin$ is not continuous, we do not
  consider such a generalization here. First, $\pMargin\in\MDA(\EVDist)$ does
  not hold for several well-known discrete distributions even in the iid case
  \cite[Section~3.1]{EmbrechtsKluppelbergMikosch1997} and other approaches,
  such as maxima of triangular arrays, are typically considered
  (\cite{AndersonColesHusler1997}). Second, the copula is no longer unique when
  $\pMargin$ is not continuous and this complicates matters in the present
  context; see e.g.\ \cite{FeidtGenestNeslehova2010}.
\end{remark}

\begin{remark}%
$\pD_n^{\rate}(u)$ can be interpreted as a probability.
  Indeed, for
  $(U_1,\dots, U_n)\sim\Copula_n$, %
  $\pD_n^{\rate}(u) =
  \Cdiag_n(u^{1/\rate_n}) %
  = \Prob(U_1 \leq u^{1/\rate_n},\dots,U_n \leq u^{1/\rate_n})=
  \Prob(\{\max(U_1,\dots,U_n)\}^{\rate_n} \leq u)$.  Its limiting behavior as
  $n\to \infty$ can thus be rephrased in terms of weak convergence of the
  maximum of dependent standard uniform random variables under power
  normalization to a continuous limit.
  Theorem~\ref{theorem_continuous_limit}~\ref{mainThm:1} thus complements and
  generalizes results of limiting distributions under power normalization in the
  iid case, pioneered in \cite{Pancheva1985}. Due to the positivity of the
  standard uniform random variables we also have a direct connection to scale
  transformations via logarithms. If $W_i$ denotes a standard reverse Weibull
  random variable, i.e.\ $\Prob(W_i\leq x) = \exp(x)$ for $x\le 0$, taking
  logarithms leads to
  $\pD_n^{\rate}(u) = \Prob(\max(W_1,\ldots,W_n) \leq \log(u)/\rate_n)$, where
  the copula of $(W_1,\ldots,W_n)$ is again $\Copula_n$ because of the
  invariance of copulas with respect to strictly increasing transformations
  \cite[Theorem~2.4.3]{Nelsen2006}. %
\end{remark}

Theorem~\ref{theorem_continuous_limit} shows that, under suitable conditions on
$r$, the convergence of the diagonal power distortion
$\pD_n^{\rate}$ to a limit $\pD$ is necessary and sufficient for
\eqref{eq:central} to hold with $\pG=\pD \circ \EVDist$, provided that the
 sequences $(c_n)$ and $(d_n)$ are suitable subsequences of $(c_n^*)$
and $(d_n^*)$. In this setup,
we necessarily have that $\EVDist \in \GEV$ by the classical Fisher--Tippett--Gnedenko theorem.
The requirement that
$\rate_n \to \infty$ is central for these statements to be true. To see this, consider a
sequence with $X_i = X_1$ for all $i \ge 2$ almost surely. In this case, for
every $n\in\IN$, $C_n$ is the comonotone copula given, for all
$u_1,\ldots, u_n \in [0,1]$, by $\min(u_1,\dots,u_n)$ and thus $\Cdiag(u) = u$ for
all $u \in [0,1]$. Then $M_n=X_1$ almost surely and \eqref{eq:central} holds
with $c_n=1$, $d_n=0$, $n \in \IN$, and $\pG=F$. This means that the limiting
distribution $\pG$ can be entirely arbitrary here.

We now consider what happens if (instead of assuming
$r_n \to \infty$ as in Theorem~\ref{theorem_continuous_limit}) $\rate_n \to \varrho$ for
$n \to \infty$ and $\varrho \in (0,\infty)$. In this
case, there is no need to stabilize $M_n$ or to assume that $\pMargin$ is in the
maximum domain of attraction of a nondegenerate distribution function; however,
the limit in \eqref{eq:central} is not necessarily generalized extreme value or
a distortion thereof anymore.

\begin{proposition}%
\label{cor:r(n)finite}
  Let $X_1,X_2,\ldots\sim\pMargin$ and $M_n = \max(X_1,\ldots,X_n)$, $n\in\IN$.
  \begin{enumerate}[label=(\roman*), labelwidth=\widthof{(iii)}]
  \item\label{cor:case:1}
  If there exists
    $\rate \colon \IN \to (0,\infty)$ such that $\rate_n \to \varrho\in(0,\infty)$ as
    $n \to \infty$ and %
    $\pD^{\rate}_n$ %
    converges pointwise to a continuous function
    $\pD$ on $[0,1]$, then, for all $x \in \IR$,
    $\Prob(M_n \leq x) \to \pD (\pMargin^{\varrho}(x))$ as $n \to \infty$.
  \item\label{cor:case:2}
  Conversely, if $\rate_n\to\varrho\in(0,\infty)$ such
    that $\abs{\rate_n-\varrho} = o(1/n)$, and
    $\Prob(M_n \leq x) \to \pD (\pMargin^{\varrho}(x))$ as $n \to \infty$ for
    all continuity points of $\pD\circ\pMargin^{\varrho}$, then
    $\lim_{n\to\infty}\pD^{\rate}_n(u) = \pD(u)$ for $u\in\{0,1\}$ and all
    continuity points $u\in(0,1)$ of $\pD$.
  \end{enumerate}
\end{proposition}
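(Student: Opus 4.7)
The plan is to leverage the two identities $\Prob(M_n \leq x) = \delta_n(\pMargin(x))$ and $\pD_n^{\rate}(u) = \delta_n(u^{1/\rate_n})$, which together give
\[
  \Prob(M_n \leq x) = \pD_n^{\rate}(\pMargin(x)^{\rate_n}).
\]
This single relation underlies both directions, through the substitutions $v = \pMargin(x)^{\rate_n}$ or $v = u^{1/\rate_n}$ applied to $\delta_n(v)$. The rate condition $\abs{\rate_n - \varrho} = o(1/n)$ in part~\ref{cor:case:2} enters precisely because the diagonal $\delta_n$ is only $n$-Lipschitz, so passing from $\delta_n(u^{1/\rate_n})$ back to $\delta_n(u^{1/\varrho})$ requires control at exactly this rate.

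For part~\ref{cor:case:1}, each $\pD_n^{\rate}$ is a continuous distribution function on $[0,1]$ with $\pD_n^{\rate}(0)=0$ and $\pD_n^{\rate}(1)=1$. If the pointwise limit $\pD$ is continuous, then $\pD$ is itself a continuous distribution function on $[0,1]$, and by P\'olya's theorem the convergence $\pD_n^{\rate}\to\pD$ is uniform on $[0,1]$. Since $\rate_n\to\varrho>0$ yields $\pMargin(x)^{\rate_n}\to\pMargin(x)^{\varrho}$ (the case $\pMargin(x)=0$ being trivial as both sides vanish), a triangle inequality combining uniform convergence of $\pD_n^{\rate}$ with continuity of $\pD$ delivers $\Prob(M_n\leq x)\to\pD(\pMargin(x)^{\varrho})$.

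For part~\ref{cor:case:2}, I would decompose, for $u\in(0,1)$,
\[
  \pD_n^{\rate}(u) - \pD(u) = \bigl[\delta_n(u^{1/\rate_n}) - \delta_n(u^{1/\varrho})\bigr] + \bigl[\delta_n(u^{1/\varrho}) - \pD(u)\bigr].
\]
The first bracket is handled via the $n$-Lipschitz property of $\delta_n$: every copula is $1$-Lipschitz in each coordinate, and telescoping along the diagonal gives $\abs{\delta_n(a)-\delta_n(b)}\leq n\abs{a-b}$. A first-order expansion yields $u^{1/\rate_n} - u^{1/\varrho} \sim u^{1/\varrho}\log(u)\,(1/\rate_n - 1/\varrho)$ as $n\to\infty$, and combined with $\abs{1/\rate_n - 1/\varrho} = o(1/n)$ this forces $n\abs{u^{1/\rate_n}-u^{1/\varrho}}\to 0$. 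For the second bracket, whenever $u$ is a continuity point of $\pD$, continuity of $\pMargin$ together with its limits at $\pm\infty$ allows me to pick $x\in\IR$ with $\pMargin(x)=u^{1/\varrho}$; such $x$ is automatically a continuity point of $\pD\circ\pMargin^{\varrho}$, so the hypothesis delivers $\delta_n(u^{1/\varrho})=\Prob(M_n\leq x)\to\pD(\pMargin(x)^{\varrho})=\pD(u)$. The cases $u\in\{0,1\}$ are immediate: $\pD_n^{\rate}(0)=0$ and $\pD_n^{\rate}(1)=1$ for all $n$, while $\pD(1)=1$ since $\pD$ is a distribution function on $[0,1]$, and $\pD(0)=0$ follows by sending $x\to-\infty$ along continuity points and using $\Prob(M_n\leq x)\leq\pMargin(x)\to 0$.

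The main obstacle is really the Lipschitz/rate interplay: pinning down $n$ (rather than $1$) as the correct Lipschitz constant of $\delta_n$ and then quantifying the exponent perturbation to confirm that $o(1/n)$ is exactly the sharp threshold needed. Once this is in hand, the remainder is routine bookkeeping, combining uniform convergence of distribution functions with the continuity of $\pD$ and $\pMargin$ and the surjectivity of $\pMargin$ onto $(0,1)$.
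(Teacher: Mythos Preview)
Your proposal is correct and follows essentially the same approach as the paper: both parts rest on the identity $\Prob(M_n\le x)=\pD_n^{\rate}(\pMargin^{\rate_n}(x))$, uniform convergence of $\pD_n^{\rate}$ to $\pD$ for part~\ref{cor:case:1}, and the same decomposition via the pivot $\delta_n(u^{1/\varrho})=\delta_n(\pMargin(x))$ together with the $n$-Lipschitz property of $\delta_n$ for part~\ref{cor:case:2}. The one minor difference is that you bound $n\lvert u^{1/\rate_n}-u^{1/\varrho}\rvert$ directly via a mean-value argument in the exponent, whereas the paper routes this through a uniform bound on $\lvert\pMargin^{\varrho}(x)-\pMargin^{\rate_n}(x)\rvert$ combined with Lipschitz continuity of $y\mapsto y^{1/\rate_n}$; your shortcut is equally valid and slightly cleaner.
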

The following two examples illustrate Proposition~\ref{cor:r(n)finite} for nontrivial dependent sequences.
\begin{example}%
\label{example_gen_comonotone_sequence}
Consider again $X_i = X_1$ for
all $i \ge 2$ almost surely. Therefore, if we consider any strictly positive
function $\rate$ with the property that $\rate_n \to \varrho$ as $n \to \infty$
for some $\varrho\in (0,\infty)$, we have that
$\pD_n^{\rate}(u) \to u^{1/\varrho}$ and the limit satisfies
$\pD(F^\varrho(x)) = \pMargin(x)$ for all $x \in \IR$. This example readily
generalizes to an eventually almost surely constant sequence of the
form $X_1,\ldots, X_K, X_K, X_K, \dots$. If $\Cdiag_K$ denotes the diagonal of the
copula $C_K$ of $(X_1,\ldots, X_K)$, then $\pD_n^r$
converges pointwise to $\pD(u) = \Cdiag_K(u^{1/\varrho})$ for all $u \in [0,1]$
for any $\rate$ that satisfies $\rate_n \to \varrho$ as $n \to \infty$ for some
$\varrho \in (0,\infty)$. The limiting distribution of $M_n$ is then
$\Cdiag_K\circ F$.
\end{example}
\begin{example}%
\label{example_Cuadras_Auge}
A special case of the multivariate Cuadras--Aug\'e copula
\cite{CuadrasAuge1981}, with parameter $\theta \in(0,1)$, is given, for all
$u_1,\ldots, u_n \in [0,1]$, by
$\Copula_n(u_1,\ldots, u_n) = \prod_{i=1}^n u_{(i)}^{(1-\theta)^{i-1}}$, where
$u_{(1)} \le \ldots \le u_{(n)}$ are the evaluation points sorted in
ascending order; see \cite{MaiScherer2009}. \cite[Example~5]{Mai2018} shows that
there indeed exists an infinite sequence of identically distributed random
variables with the property that for each $n \ge 2$, the copula $\Copula_n$ of
$(X_1,\dots,X_n)$ is a Cuadras--Aug\'e copula of this form with parameter $\theta \in (0,1)$. The diagonal of $\Copula_n$ has the form
$\Cdiag_n(u) = u^{(1-(1-\theta)^n)/\theta}$ for all $u\in [0,1]$. If
$\rate_n = (1-(1-\theta)^n)/\theta$, then $\rate_n \to 1/\theta$ as
$n \to \infty$. Furthermore, for all $u \in [0,1]$, $\pD_n^\rate(u) = \pD(u)=u$
for all $n \in \IN$. By Proposition~\ref{cor:r(n)finite}~\ref{cor:case:1}, the
limiting distribution of $M_n$ is therefore $\pMargin^{1/\theta}$.
\end{example}

Proposition~\ref{cor:r(n)finite}~\ref{cor:case:1} and the subsequent examples
show that any continuous distribution can arise as a limit of suitably
normalized maxima of some dependent sequence of identically distributed random
variables. In particular, there is no guarantee that the limiting distribution
is max-stable, which contrasts the iid
case. %
In general there is thus no hope of obtaining a single nontrivial class of all
possible nondegenerate limits of suitably normalized maxima of identically
distributed dependent random variables. Nonetheless, two analogues of the
Fisher--Tippett--Gnedenko theorem can be obtained under specific
assumptions. Assuming $\pMargin \in \MDA(\EVDist)$ for some $\EVDist\in \GEV$,
Theorem~\ref{theorem-FTG2} is a simple consequence of
Theorem~\ref{theorem_continuous_limit}~\ref{mainThm:1} and the convergence to
types theorem \cite[Proposition~0.2]{Resnick1987}.

\begin{corollary}%
\label{corollary_FTG}
Consider $X_1,X_2,\ldots\sim\pMargin$ and let, for $n\in\IN$, $\Cdiag_n$ be the
diagonal of the copula $C_n$ of $(X_1,\ldots,X_n)$.  Assume that
$\pMargin\in\MDA(\EVDist_{\EVI,\mu,\sigma})$ with
$\EVDist_{\EVI,\mu,\sigma}\in\GEV$ and that there exists
$\rate \colon \IN \to (0,\infty)$ such that $\lim_{n\to\infty} \rate_n = \infty$
and $\pD^{\rate}_n$ converges pointwise to a continuous function $\pD$ on
$[0,1]$.  If there exist sequences $(\cn_n)$, $\cn_n > 0$, and $(\dn_n)$ such
that \eqref{eq:central} holds for a nondegenerate $\pG$, then there exist $c>0$
and $d \in \mathbb{R}$ such that
$\pG = \pD\circ \EVDist_{\EVI,\tilde{\mu},\tilde{\sigma}}$, where
$\EVDist_{\EVI,\tilde{\mu},\tilde{\sigma}}\in\GEV$ with
$\tilde{\mu} = (\mu-d)/c$ and $\tilde{\sigma} = \sigma/c$.
\end{corollary}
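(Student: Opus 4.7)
The plan is to derive this corollary essentially as a bookkeeping consequence of Theorem~\ref{theorem_continuous_limit}~\ref{mainThm:1} combined with the classical convergence-to-types theorem (Proposition~0.2 of \cite{Resnick1987}).

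First, because $\pMargin\in\MDA(\EVDist_{\EVI,\mu,\sigma})$, by the Fisher--Tippett--Gnedenko theorem there exist iid normalizing sequences $(\an_n)$, $\an_n>0$, and $(\bn_n)$ such that $(M_n^*-\bn_n)/\an_n \indist \EVDist_{\EVI,\mu,\sigma}$. Given the rate $\rate$ with $\rate_n\to\infty$ and the pointwise limit $\pD$ of $\pD^{\rate}_n$, Theorem~\ref{theorem_continuous_limit}~\ref{mainThm:1} yields
\begin{align*}
  \frac{M_n-\bn_{\ceil{\rate_n}}}{\an_{\ceil{\rate_n}}} \indist \pD\circ\EVDist_{\EVI,\mu,\sigma}.
\end{align*}
I would then verify that this limit is nondegenerate: since $\pD$ is a continuous distribution function on $[0,1]$ we have $\pD(0)=0$ and $\pD(1)=1$, and the GEV distribution is strictly increasing on its support, so the composition cannot be degenerate.

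Next, by hypothesis we also have $(M_n-\dn_n)/\cn_n \indist \pG$ for a nondegenerate $\pG$. Thus the distribution functions of $M_n$ converge after two different affine normalizations to two nondegenerate limits. The convergence-to-types theorem then guarantees the existence of constants $c>0$ and $d\in\IR$ such that
\begin{align*}
  \frac{\cn_n}{\an_{\ceil{\rate_n}}} \to c, \qquad \frac{\dn_n-\bn_{\ceil{\rate_n}}}{\an_{\ceil{\rate_n}}} \to d, \qquad \pG(x) = (\pD\circ\EVDist_{\EVI,\mu,\sigma})(cx+d) \quad \forall x\in\IR.
\end{align*}

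Finally, I would unpack $\EVDist_{\EVI,\mu,\sigma}(cx+d)$ by direct substitution in the parametric form of the GEV distribution. Matching the coefficient of $x$ and the constant term inside the bracket $1+\EVI(\cdot-\mu)/\sigma$ gives $\tilde\sigma = \sigma/c$ and $\tilde\mu = (\mu-d)/c$, so $\EVDist_{\EVI,\mu,\sigma}(cx+d) = \EVDist_{\EVI,\tilde\mu,\tilde\sigma}(x)$, and hence $\pG = \pD\circ\EVDist_{\EVI,\tilde\mu,\tilde\sigma}$. The $\xi=0$ case is handled analogously by substituting into $\exp(-\e^{-(y-\mu)/\sigma})$. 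There is no genuine obstacle here; the only step requiring minor care is the nondegeneracy check, which is needed before invoking the convergence-to-types theorem.
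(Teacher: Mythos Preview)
Your proposal is correct and follows essentially the same approach as the paper: apply Theorem~\ref{theorem_continuous_limit}~\ref{mainThm:1} to obtain convergence under the normalization $(\an_{\ceil{\rate_n}},\bn_{\ceil{\rate_n}})$, invoke the convergence-to-types theorem to relate the two limits, and then read off the reparametrization of the GEV. The paper's proof is slightly terser and does not spell out the nondegeneracy check, but your added remark that $\pD\circ\EVDist_{\EVI,\mu,\sigma}$ is continuous (hence nondegenerate) is a harmless and correct refinement.
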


\begin{remark} \label{rem_rate_uniqueness}
  While the rate $\rate$ in
  Corollary~\ref{corollary_FTG} is not unique, the speed at which it tends to
  $\infty$ is. Suppose that
  the assumptions of Corollary~\ref{corollary_FTG} hold and that
  $(\an_n)$ and $(\bn_n)$ are the normalizing sequences of the maxima of the
  associated iid sequence. %

  Suppose $\tilde \rate \colon \IN \to (0,\infty)$ is such that
  $r_n / \tilde \rate_n \to \theta$ as $n \to \infty$ where
  $\theta \in (0,\infty)$.  Obviously, $\tilde \rate_n \to \infty$ as
  $n \to \infty$. Because $\pD$ is continuous, $\pD^{\rate}_n \to \pD$ uniformly
  on $[0,1]$ by Lemma~\ref{lemma_uniform_convergence} and hence
  $\pD^{\tilde\rate}_n(u) \to \pD(u^\theta)=:\widetilde\pD(u)$ for all
  $u \in [0,1]$. From Theorem~\ref{theorem_continuous_limit}~\ref{mainThm:1}
  we obtain that \eqref{eq:central} holds with
  $\cn_n = \an_{\ceil{\tilde \rate_n}}$, $\dn_n = \bn_{\ceil{\tilde \rate_n}}$
  and
  $G=\widetilde \pD \circ \EVDist_{\EVI,\mu,\sigma} = \pD \circ
  \EVDist^\theta_{\EVI,\mu,\sigma}$. By direct calculation,
  $\EVDist^\theta_{\EVI,\mu,\sigma} = \EVDist_{\EVI,\tilde\mu,\tilde\sigma}$,
  where $\tilde \mu = \mu+\sigma\log(\theta)$ if $\EVI=0$ and $\tilde \mu = \mu$
  otherwise, while $\tilde \sigma = \sigma$ if $\EVI =0$ and
  $\tilde \sigma = \sigma\theta^{1/|\EVI|}$ otherwise. Applying the
  convergence to types theorem as in the proof of Corollary~\ref{corollary_FTG}
  yields
  $\an_{\ceil{\tilde \rate_n}}/\an_{\ceil{\rate_n}} \to \sigma/\tilde \sigma$
  and
  $(\bn_{\ceil{\tilde \rate_n}}-\bn_{\ceil{\rate_n}})/\an_{\ceil{\rate_n}} \to
  \mu - \tilde \mu \sigma/\tilde \sigma$.

Now suppose that $\tilde \rate \colon \IN \to (0,\infty)$ is such that $\rate_n / \tilde \rate_n \to 0$. In this case, $\tilde \rate_n \to \infty$, but this time the uniform convergence of $\pD^{\rate}_n$ to $\pD$ implies that $\pD^{\tilde \rate}_n(u) \to 1$ as $n \to \infty$ if $u \in (0,1]$, while  $\pD^{\tilde \rate}_n(0) = 0$ for all $n \in \IN$. Consequently, for any $x \in \IR$ such that $\EVDist_{\EVI,\mu,\sigma}(x) > 0$, $\Prob(M_n \le \an_{\ceil{\tilde \rate_n}} x + \bn_{\ceil{\tilde \rate_n}}) \to 1$ as $n \to \infty$. Thus if $\xi \in (-\infty, 0]$, the limiting behavior of $(M_n - \bn_{\ceil{\tilde \rate_n}})/\an_{\ceil{\tilde \rate_n}}$ is degenerate. The same is true when $\xi > 0$ and $D$ is such that $D(u) < 1$ for all $u \in [0,1)$, as can be argued from the convergence to types theorem.

The case of $\tilde \rate \colon \IN \to (0,\infty)$ with $\rate_n / \tilde \rate_n \to \infty$ and $\tilde \rate_n\to \infty$ as $n \to \infty$ is similar. We obtain that whenever $\EVDist_{\EVI,\mu,\sigma}(x) <1$, $\Prob(M_n \le \an_{\ceil{\tilde \rate_n}} x + \bn_{\ceil{\tilde \rate_n}}) \to 0$ as $n \to \infty$. If $\xi \in [0,\infty)$, this implies that the weak limit of $(M_n - \bn_{\ceil{\tilde \rate_n}})/\an_{\ceil{\tilde \rate_n}}$ is degenerate. For $\xi < 0$, the same follows from the convergence to types theorem if additionally $D(u) > 0$ for all $u \in (0,1]$.
\end{remark}

The second generalization of the Fisher--Tippett--Gnedenko theorem can be
derived without assuming that $\pMargin \in \MDA(\EVDist)$ for some
$\EVDist\in \GEV$, at the cost of stronger assumptions on the underlying
dependence structure than those in Corollary~\ref{corollary_FTG}.

\begin{theorem}%
  \label{theorem-FTG2}
  Let $X_1,X_2,\ldots\sim\pMargin$.
  Suppose that the diagonal $\Cdiag_n$ of the copula $\Copula_n$ of
  $(X_1,\dots,X_n)$ is strictly increasing for each $n$ and that there exists a
  function $\rate \colon \IN \to (0,\infty)$, and a bijection
  $\lambda\colon(0,\infty) \to (0,\infty)$ such that the following conditions
  hold:
\begin{enumerate}[label=(\roman*), labelwidth=\widthof{(iii)}]
  \item\label{FTG:1} $\lim_{n\to\infty} \rate_n = \infty$ and
    $\lim_{n\to\infty} \rate_{\ceil{tn}}/\rate_n = \lambda(t)$, $t > 0$;
  \item the diagonal power distortion $\pD^{\rate}_n$ with respect to $\rate$ converges pointwise to a continuous and
    strictly increasing bijection $\pD\colon[0,1] \to [0,1]$.
  \end{enumerate}
  If there exist sequences $(\cn_n)$, $\cn_n>0$ and $(\dn_n)$ such that \eqref{eq:central} holds for a
  nondegenerate $\pG$, then $\pG = \pD\circ\EVDist$ where $\EVDist \in \GEV$.
\end{theorem}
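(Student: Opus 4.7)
The plan is to reduce the hypothesis \eqref{eq:central} to a type--convergence statement for $\pMargin^{\rate_n}$ along the normalization $(\cn_n,\dn_n)$, and then exploit the regularity condition~\ref{FTG:1} to derive continuous max-stability of the resulting limit.

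First, combining Definition~\ref{def:Dnr} with Sklar's theorem yields
\[
\Prob(M_n \le \cn_n x + \dn_n) \;=\; \Cdiag_n\bigl(\pMargin(\cn_n x + \dn_n)\bigr) \;=\; \pD_n^{\rate}\bigl(\pMargin(\cn_n x + \dn_n)^{\rate_n}\bigr),
\]
so \eqref{eq:central} becomes $\pD_n^{\rate}(\pMargin(\cn_n x + \dn_n)^{\rate_n}) \to \pG(x)$ at continuity points of $\pG$. Because $\pD$ is continuous on the compact $[0,1]$ and each $\pD_n^{\rate}$ is monotone, Lemma~\ref{lemma_uniform_convergence} upgrades the pointwise convergence $\pD_n^{\rate}\to \pD$ to uniform convergence. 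Since $\pD$ is a strictly increasing continuous bijection of $[0,1]$, the inverse $\pD^{-1}$ exists and is continuous. A compactness argument applied to the bounded sequence $u_n := \pMargin(\cn_n x + \dn_n)^{\rate_n}\in[0,1]$ (every subsequential limit $u^*$ must satisfy $\pD(u^*)=\pG(x)$, hence equal $\pD^{-1}(\pG(x))$) then gives
\[
\pMargin(\cn_n x + \dn_n)^{\rate_n} \;\to\; \pD^{-1}(\pG(x)) \;=:\; \EVDist(x)
\]
at every continuity point of $\pG$. Since $\pD^{-1}$ is a continuous strictly increasing bijection of $[0,1]$ and $\pG$ is nondegenerate, $\EVDist$ is itself a nondegenerate distribution function.

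Next, I would feed in condition~\ref{FTG:1}. Applying the display above along the subsequence $\lceil tn \rceil$ yields $\pMargin(\cn_{\lceil tn \rceil} x + \dn_{\lceil tn \rceil})^{\rate_{\lceil tn \rceil}} \to \EVDist(x)$. Writing $\pMargin^{\rate_{\lceil tn \rceil}} = (\pMargin^{\rate_n})^{\rate_{\lceil tn \rceil}/\rate_n}$ and using $\rate_{\lceil tn \rceil}/\rate_n \to \lambda(t)$ gives
\[
\pMargin(\cn_{\lceil tn \rceil} x + \dn_{\lceil tn \rceil})^{\rate_n} \;\to\; \EVDist(x)^{1/\lambda(t)},
\]
which is again a nondegenerate distribution function. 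The convergence to types theorem \cite[Proposition~0.2]{Resnick1987} then produces $A(t)>0$ and $B(t)\in\IR$ with
\[
\EVDist^{1/\lambda(t)}(x) \;=\; \EVDist(A(t)\,x + B(t)), \quad x \in \IR.
\]

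Finally, since $\lambda\colon(0,\infty)\to(0,\infty)$ is a bijection, $t\mapsto 1/\lambda(t)$ sweeps out all of $(0,\infty)$, so the previous identity reads $\EVDist^s(x) = \EVDist(a(s)\,x+b(s))$ for every $s>0$ and appropriate $a(s)>0$, $b(s)\in\IR$. Specializing to positive integer $s$ recovers the classical max-stability relation, so $\EVDist$ is max-stable and hence of $\GEV$ type; because $\GEV$ is closed under location--scale transformations, $\EVDist\in\GEV$, and $\pG = \pD\circ\EVDist$ by the construction of $\EVDist$. The main obstacle is the inversion step in the first paragraph: passing from $\pD_n^{\rate}(u_n)\to\pG(x)$ to $u_n\to\pD^{-1}(\pG(x))$ relies simultaneously on the uniform convergence $\pD_n^{\rate}\to\pD$ and on the fact that $\pD$ is a strictly increasing continuous bijection (so that $\pD^{-1}$ is well defined and continuous). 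Once this inversion is justified, the remainder is a clean application of the convergence to types theorem together with the classical characterization of continuously max-stable distributions as the $\GEV$ class.
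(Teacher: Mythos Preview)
Your proof is correct and follows essentially the same route as the paper's: invert the diagonal power distortion to extract $\pMargin^{\rate_n}(\cn_n x+\dn_n)\to\EVDist(x)$, then combine the subsequence $\lceil tn\rceil$ with condition~\ref{FTG:1} and the convergence to types theorem to obtain max-stability of $\EVDist$ via the bijectivity of $\lambda$. The only cosmetic difference is in the inversion step you flagged: the paper argues that $(\pD_n^{\rate})^{-1}\to\pD^{-1}$ pointwise (hence uniformly) and composes, whereas your subsequential compactness argument achieves the same conclusion equally cleanly.
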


\begin{remark}%
An interesting special case of Theorem~\ref{theorem-FTG2} arises when $r_n$ satisfies $n/r_n \to \alpha$ as $n\to \infty$ for some $\alpha > 0$. Condition~\ref{FTG:1} of Theorem~\ref{theorem-FTG2} then holds with $\lambda(t) = t$ for all $t  > 0$. From Corollary~\ref{corollary_pointwise_convergence_power_limit} we have that $\pMargin^n(\cn x + \dn) \to H^\alpha(x)$ for all $x \in \IR$, i.e.\ $\pMargin \in \MDA(H^\alpha)$, where $H^\alpha$ is of the same type as $H$ given that $H$ is max-stable.
\end{remark}
We close this section with an example where all calculations can be done explicitly. %
\begin{example}\label{example_moving_max}%
  Consider the following case of a moving maximum process of \cite{Newell1964,Deheuvels1983}, see
  \cite[Chapter~10]{BeirlantGoegebeurTeugelsSegers2004} for an overview. For a fixed
  $k \geq 0$, let $(Z_i)_{i=-k+1}^{\infty}$ be a sequence of iid standard
  Fr\'{e}chet random variables and set
  $Y_i = (1/(k+1))\max_{0 \leq j \leq k} Z_{i-j}$.  Then $Y_i$ is also standard Fr\'{e}chet.
  For $n\geq k+1$ the joint distribution of $\bfY_n = (Y_1,\ldots,Y_n)$ can be calculated to be
  $\JointDist_{\bfY_n}(y_1,\ldots,y_n) = \prod_{i=-k+1}^{0}\Prob(Z_{i} \leq (k+1) \min_{1\leq \ell \leq i+k} y_{\ell})\times \prod_{i=1}^{n-k}\Prob(Z_{i} \leq (k+1) \min_{0 \leq \ell \leq k} y_{i+\ell}) \times \prod_{i=n-k+1}^{n}\Prob(Z_{i} \leq (k+1) \min_{i \leq \ell \leq n} y_{\ell})$.
  Taking into account the marginal distribution of $Y_i$ and using that
  $\min_{j\in J}(-1/\log(u_j))$ equals $-1/\log(\min_{j\in
    J} u_j)$ for any index set $J$, the copula of $\bfY_n$ is given by
\begin{align*}%
\Copula_n(u_1,\ldots,u_n) &=
 \prod_{i=-k+1}^{0}\min_{1\leq \ell \leq i+k} u_{\ell}^{1/(k+1)}  \prod_{i=1}^{n-k}\min_{0 \leq \ell \leq k} u_{i+\ell}^{1/(k+1)}  \prod_{i=n-k+1}^{n} \min_{i \leq \ell \leq n} u_{\ell}^{1/(k+1)}.
\end{align*}
Comparing $\Copula_n$ to \cite[Equation~(5.3)]{Mulinacci2015} (see also
  \cite{Li2008}), the copula can be identified as Marshall--Olkin copula, i.e.\ the survival copula of a
  multivariate Marshall--Olkin distribution with appropriate choices of
  within-group intensities to keep only certain terms in the product
  over all non-empty subsets of $\{1,\ldots,n\}$.
We now have
  $\Cdiag_n(u) = u^{(n+k)/(k+1)}$, leading to
  $\pD_n^n(u) = \Cdiag_n(u^{1/n}) = u^{(n+k)/(n(k+1))} \to u^{1/(k+1)} =
  \pD(u)$ as $n \to \infty$.

  Now pick an arbitrary continuous distribution function
  $\pMargin\in\MDA(\EVDist_{\EVI,\mu,\sigma})$ and set
  $X_i = \pMargin^{-1}(\pMargin_{Y_i}(Y_i))$ to obtain a sequence $(X_i)$ with
  univariate margins $\pMargin$ and the Marshall--Olkin copula $\Copula_n$
  as the copula of $(X_1,\ldots,X_n)$.  If $(\an_n)$
  and $(\bn_n)$ denote the normalizing sequences from the iid case and if the rate
  function is given by $\rate_n=n$, we recover from
  Theorem~\ref{theorem_continuous_limit}~\ref{mainThm:1} that
  $\Prob\{(M_n-\bn_n)/\an_n \leq x\} \to \EVDist^{\theta}_{\EVI,\mu,\sigma}(x)$, $x \in \IR$,
  where $\theta = 1/(k+1) \in (0,1]$ and $M_n = \max(X_1,\ldots, X_n)$.  The
  fact that the distortion $\pD(u) = u^{\theta}$ is a power function has the
  interesting effect that the weak limit of the maximum under dependence is
  again $\GEV$, as already addressed in Remark~\ref{rem_rate_uniqueness}.
 This result about the moving maximum process is well-known and $\theta=1/(k+1)$ is indeed its extremal index
  \cite[Example~10.5]{BeirlantGoegebeurTeugelsSegers2004}.

  Finally, notice that any rate
  function $\rate_n = n/\alpha$ for $\alpha > 0$ could have been used to obtain
  $\Cdiag_n(u^{1/(n/\alpha)}) \to u^{\alpha/(k+1)}$.  Because
  $\lim_{n\to\infty} \rate_{\ceil{nt}}/\rate_n = t$ for any $\alpha>0$,
  Theorem~\ref{theorem-FTG2} implies that changing the rate function will only
  lead to a location-scale transform of the limiting distribution but will not
  change its functional form otherwise. This observation is in line with the fact that
  the changed rate $\rate_n=n/\alpha$ also influences the utilized normalizing
  constants, leading to the location-scale transform of the limit.
\end{example}

\begin{remark}%
  Although it is a limit of rescaled copula diagonals, $\pD$ in
  Theorem~\ref{theorem-FTG2} is not necessarily a copula diagonal
  itself. Indeed, by the Fr\'echet--Hoeffding inequality
  \cite[Theorem~2.10.12]{Nelsen2006}, any copula diagonal must satisfy that for
  all $u\in(0,1)$, $\Cdiag_n(u) \leq u$. However, the limiting $\pD$ of the
  moving maximum process in Example~\ref{example_moving_max} satisfies
  $\pD(u) = u^\theta > u$ for $u\in(0,1)$ whenever $\theta < 1$.
\end{remark}

\section{Sequences with asymptotic power diagonals}\label{sec:3}
An appealing feature of the moving maximum process in
Example~\ref{example_moving_max} is that the limiting distribution of normalized
maxima is still generalized extreme value. In this section, we
explore the consequences of the results in
Section~\ref{section_extremes_and_copula_diagonals} for a broad class of
sequences which behave similarly in the sense that the weak limit of suitably normalized maxima is $\GEV$ because the diagonal power distortion
converges to a power.  We first formalize this property in a definition.
\begin{definition}\label{def:3.1}%
  A sequence of identically distributed random variables
  $X_1,X_2, \ldots\sim\pF$ with continuous marginal distribution function
  $\pMargin$ has a \emph{power diagonal} if, for all $n \in \IN$ and
  $u \in (0,1)$, $\delta_n(u) = u^{\eta_n}$ for some sequence $(\eta_n)$ such
  that $\eta_n \to \infty$ as $n \to \infty$. And
  $X_1,X_2, \ldots\sim\pF$ has an \emph{asymptotic power diagonal with index}
  $\theta >0$ if there exists a rate function $\rate : \IN \to (0,\infty)$ with
  $\rate_n \to \infty$ as $n \to \infty$ and such that the diagonal power
  distortion satisfies $\pD_n^{\rate}(u) \to u^\theta$ for all $u \in [0,1]$.
\end{definition}
Before proceeding, note that a sequence with a power diagonal necessarily has an asymptotic
power diagonal; it suffices to set $r_n = \eta_n$ as then
$\pD_n^{\rate_n}(u) = u$ for all $u \in [0,1]$. Furthermore, $\eta_n \to \infty$
and $\rate_n \to \infty$ as $n\to \infty$ are required in view of
Proposition~\ref{cor:r(n)finite}, so that all possible limiting distributions of
normalized maxima can be characterized.  Also, any copula is bounded above
by the comonotone copula \cite[Theorem~2.10.12]{Nelsen2006}, so that
$\pD_n^{\rate}(u)\le u^{1/\rate_n}$ for all $u \in (0,1)$. Given
that $u^{1/\rate_n}\to 1$ as $n \to \infty$ if $\rate_n \to \infty$, any
$\theta$ such that $\pD_n^{\rate}(u) \to u^\theta$, $u \in (0,1)$, must
satisfy $\theta \geq 0$. We excluded the case $\theta=0$ in
Definition~\ref{def:3.1} because the limiting distortion $\pD$ would then be
degenerate.%

If $X_1,X_2, \ldots\sim\pF$ has an asymptotic power diagonal and its marginal distribution
function satisfies $\pMargin \in \MDA(\EVDist)$, Theorem~\ref{theorem_continuous_limit}~\ref{mainThm:1}
guarantees that
$( M_n - \bn_{\ceil{\rate_n}})/\an_{\ceil{\rate_n}} \indist \EVDist^\theta$,
where $(\bn_n)$ and $(\an_n)$ are the normalizing sequences from the iid case, i.e.\ such that
$\pMargin^n(\an_n x + \bn_n) \to \EVDist(x)$ as $n \to \infty$ for any $x \in \IR$.
Because $\EVDist\in \GEV$, $H^\theta$ is generalized extreme value with the same shape parameter as $\EVDist$, as detailed in Remark~\ref{rem_rate_uniqueness}. The latter also implies that if $\rate_n/n \to 0$ or $\rate_n/n \to \infty$, the weak limit of $(M_n-\bn_n)/\an_n$ is degenerate.

If $X_1,X_2, \ldots\sim\pF$ has an asymptotic power diagonal but not necessarily $\pMargin \in \MDA(\EVDist)$,  any non-degenerate $G$ in \eqref{eq:central} is still $\GEV$ by Theorem~\ref{theorem-FTG2} as long as $\delta_n$ is strictly increasing for each $n$ and $\rate_{\ceil{tn}}/\rate_n \to \lambda(t)$
for all $t > 0$ and some bijection $\lambda\colon(0,\infty) \to (0,\infty)$.

We discuss sequences with power diagonals in Section~\ref{sec:3.1} and relate to sequences that satisfy the so-called distributional mixing condition in Section~\ref{sec:extremes:for:time:series}.

\subsection{Sequences with power diagonals}\label{sec:3.1}
Apart from an iid sequence where $C_n$ is the independence copula
$\Pi_n(u_1,\dots,u_n)=\prod_{i=1}^nu_i$ with $\delta_n(u) = u^n$, $u\in (0,1)$,
an example of a sequence with a power diagonal is the moving maximum process in
Example~\ref{example_moving_max}. We can generalize it if we notice that the
copula $C_n$ in the latter example is in fact extreme value. From, e.g.\
\cite{Huang1992,BeirlantGoegebeurTeugelsSegers2004,GudendorfSegers2010}, this
means that $C_n$ is of the form
$C_n(u_1,\dots, u_n) = \exp[-\STDF_n\{-\log (u_1),\dots,-\log(u_n)\}]$
  for all $u_1,\dots, u_n \in (0,1)$, where $\ell_n: [0,\infty)^n\to[0,\infty)$ is a \emph{stable tail dependence function (stdf)}, i.e.\ a map which is homogeneous of order one and has further analytical properties identified in \cite{Ressel2013}.
 This motivates the following definition.

 \begin{definition}\label{def_meta_extreme_sequence}%
   We call any sequence $(X_i, i\in\IN)$ of identically distributed random
   variables with common continuous marginal distribution $\pF$ a
   \emph{meta-extreme sequence}, if the copula $\Copula_n$ of $(X_1,\dots,X_n)$
   is an extreme value copula for any $n\in\IN$.
 \end{definition}

 Because any stdf $\ell_n$ is homogeneous of order
 one, the diagonal of an extreme value copula satisfies
 $\Cdiag_n(u) = \exp\{(\log u)\STDF_n(1,\ldots,1)\}=u^{\eta_n}$, where
 $\eta_n = \STDF_n(1,\ldots, 1)$ is the extremal coefficient of Smith
 \cite{Smith1990}; see also \cite[Chapter~2.3]{Falk2019}.  This means that
 meta-extreme sequences have a power diagonal provided that $\eta_n \to \infty$
 as $n \to \infty$. Example~\ref{example_Cuadras_Auge} shows that this latter
 condition is indeed a restriction; the Cuadras--Aug\'e copula appearing therein
 is extreme value but such that $\eta_n$ has a finite limit as $n\to\infty$.

 By the characterization of multivariate max-stable distributions in terms of
 D-norms described in \cite[Section~2.3, Theorem~2.3.3.]{Falk2019}
 and the Takahashi characterization (\cite[Corollary~1.3.2]{Falk2019}), we have $\eta_n=n$ for all
 $n \in \IN$ if and only if the elements of the meta-extreme sequence $(X_i)$
 are iid. Next, we discuss two specific examples of meta-extreme sequences.

\begin{example}\label{ex:Mai}
  Exchangeable meta-extreme sequences are fully characterized in \cite{Mai2019},
  where it is shown on p.~167 that the na\"ive, bottom-up construction approach
  of selecting an exchangeable STDF $\tilde \ell$ in
  a fixed dimension $n$ in the hope that there exists an exchangeable
  meta-extreme sequence with $\ell_n = \tilde \ell$ fails in
  general. Generalizing de Haan's spectral representation \cite{DeHaan1984}, it
  is shown in \cite{Mai2019} that $(X_i)$ is an exchangeable meta-extreme
  sequence if and only if there exists $b \in [0,1]$ and an exchangeable
  sequence $(W_i)$ of non-negative random variables with unit mean so that for
  each $n \ge 2$ and $t_1,\dots, t_n \ge 0$,
  $\STDF_n(t_1,\dots, t_n) = b \sum_{j=1}^n t_j + (1-b) \E[\max_{1 \le i \le n}
  (t_i W_i)]$.  Clearly, $b=1$ corresponds to an iid sequence $(X_i)$. When
  $b > 0$, the sequence $(X_i)$ has a power diagonal, while when $b\in[0,1)$,
  the behavior of the extremal coefficient $\eta_n = \STDF_n(1,\dots, 1)$
  depends on $(W_i)$. Also when $b=0$,
  $\ell_n(t_1,\dots,t_n) = \|(t_1,\dots, t_n)\|_D$, where $\|\cdot \|_D$ is the
  so-called D-norm generated by $(W_1,\dots, W_n)$ \cite[Lemma~1.1.3]{Falk2019}.

  The special case when $b=0$ and $(W_i)$ is an iid sequence is treated in
  \cite{Mai2018}, where it is also shown how to construct meta-extreme sequences
  starting from the distribution function $F_W$ of $W_1$, provided that
  $F_W(0) < 1$. This construction first defines a stochastic process $H_t$,
  $t \ge 0$, by
  $H_t = -\ln [\prod_{k=1}^\infty F_W\{(\varepsilon_1 + \dots +
  \varepsilon_k)/t-\}]$, where $F_W(w-)$ is the left limit of $F_W$ at $w$ and
  $(\varepsilon_i)$ is an iid sequence of unit exponentials.  The meta-extreme
  sequence $(X_i)$ with univariate margin $F$ is then obtained by setting
  $X_i = F^{-1}( e^{-Y_i})$, where $Y_i = \inf\{ t > 0 : H_t > \xi_i\}$ and
  $(\xi_i)$ is an independent copy of $(\varepsilon_i)$. From \cite[Lemma~2
  and~3]{Mai2018}, $(X_i)$ has a power diagonal if and only
  if %
  $\inf\{t : F_W(t) = 1\} = \infty$.

  Specific choices for $F_W$ lead to meta-extreme sequences with well-known
  stdfs. From \cite[Example~1 and Example~2]{Mai2018} and
  \cite[Section~6]{Belzile/Neslehova:2017}, we can take $W$ to be scaled gamma
  with parameters $\alpha > 0$, $\rho > -\alpha$ and density
  $f_W(w) = \{|1/\rho|/\Gamma(\alpha)\} a^{-\alpha/\rho} x^{\alpha/\rho-1}
  e^{-(w/a)^{1/\rho}}$, $w > 0$, where $\Gamma(\cdot)$ is Euler's gamma
  function and $a=\Gamma(\alpha)/\Gamma(\rho + \alpha)$. Setting $\rho=1$ yields
  the (symmetric) Coles--Tawn extremal Dirichlet sequence, while when
  $\alpha=1$, then $\rho > 0$ and $\rho \in (-1,0)$ lead to the negative and
  positive logistic sequences, respectively. The positive logistic (or
  Gumbel--Hougaard) stdf is usually parametrized in
  terms of $\theta = -1/\rho$ and is given, for for all $t_1,\dots, t_n \ge 0$,
  by
  $ \ell_{n,\text{Gu}}(t_1,\dots, t_n) = (t_1^{\theta} + \dots + t_n^{\theta})^{1/\theta}$.
\end{example}

\begin{example}%
\label{ex:evds}
Another approach to construct meta-extreme sequences is through a more general
class of simple max-stable processes on $[0,1]$, where we follow
\cite[Chapter~9]{deHaanFerreira2006}. %
Let $C[0,1]$ denote the space of continuous functions on $[0,1]$ equipped with the supremum norm and $C^{+}[0,1]$ its subspace of strictly positive functions. The process $S$ on $C^+[0,1]$ is \emph{simple max-stable} if for all $t \in [0,1]$, $\Prob(S(t) \le z) = e^{-1/z}$ for all $z \ge 0$ and if for all $k \in \IN$, $(1/k) \vee_{i=1}^k S_i \eqindist S $, where $S_1,S_2,\dots$ are iid copies of $S$, $\vee$ is the pointwise maximum operator, and $\eqindist$ denotes equality in distribution.

Starting with a simple max-stable process and a strictly increasing sequence
$(t_i)$ in $[0,1]$, %
e.g.\ $t_i = 1 - 1/(i+1)$ for $i\geq 1$, we can define $(X_i)$ by
$X_i = F^{-1}(e^{-1/S(t_i)})$. Because $S(t_i)$ is standard Fr\'echet,
$X_i \sim F$. The invariance principle \cite[Theorem~6.5.6]{schweizersklar1983}
implies that for each $n \in \IN$, the copula $C_n$ of $(X_1,\dots, X_n)$ is the
same as that of $(S(t_1),\dots, S(t_n))$ and consequently extreme value, so that
$(X_i)$ is a meta-extreme sequence. Following \cite{DeHaan1984}, $S$ admits the
stochastic representation $S \eqindist \vee_{k\geq 1}(\xi_k W_k)$, where
$(\xi_k)$ is an enumeration of points of a Poisson point process $\xi$ on
$[0,1]$ with intensity $t^{-2} \d t$ and which is independent of the iid copies
$W_1, W_2, \dots$ of a stochastic process $W$ on $C^{+}[0,1]$ such that
$\E[W(t)] = 1$ for all $t \in [0,1]$ and
$\E\big[\sup_{t \in [0,1]} W(t)\big] < \infty$
\cite[Corollary~9.4.5]{deHaanFerreira2006}. This implies that the stdf $\ell_n$
of $C_n$ satisfies
$\STDF_n(x_1,\dots, x_n) = \E[ \max_{1 \le i \le n} x_i W(t_i)]$ for all
$x_1,\dots, x_n \ge 0$, so that $\ell_n(x_1,\dots,
x_n)$ %
is the D-norm generated by $(W(t_1),\dots,
W(t_n))$ \cite[Lemma~1.1.3]{Falk2019}.
Note however that the meta-extreme sequence $(X_i)$ constructed this way cannot have a power diagonal %
because $\eta_n \not \to \infty$. Indeed, for each $n \in \IN$,
$\eta_n =\ell_n(1,\dots, 1) = \E\{ \max_{1 \le i \le n} W(x_i)\} \le \E \{\sup_{t \in [0,1]} W(t)\} < \infty$.
Since $\eta_n \ge 1$ and the sequence $(\eta_n)$ is non-decreasing, there exists $\varrho > 0$ so that $\eta_n \to \varrho$. Proposition~\ref{cor:r(n)finite}~\ref{cor:case:1} thus implies that $M_n = \max(X_1,\dots, X_n)$ converges weakly to $F^\varrho$.
\end{example}
The next example shows that power diagonals arise not
only from extreme value copulas, demonstrating that the previous
discussion is not limited to meta-extreme sequences.
 \begin{example}%
 \label{ex:FN}
 Consider a sequence of iid bivariate random vectors $(U_i,V_i)$, $i \in
 \{0,1,\dots\}$ such that $(U_i,V_i)\sim\Copula_{\FN}$ where $\Copula_{\FN}(u,v)
 = \min\{u,v,(u^2+v^2)/2\}$,
 $u,v\in[0,1]$. This copula is a special case of the so-called bivariate
 diagonal copulas introduced and studied in \cite{FredricksNelsen1997}. For each
 $i \in \IN$, set $Y_i =
 \max(V_{i-1}^2,U_i^2)$. This construction is similar to the
 (finite-dimensional) product type copula construction of \cite{Liebscher2008}
 and to the copulas discussed in \cite{MazoGirardForbes2015}. Given that
 $V_{i-1}$ and $U_i$ are independent, the random variables
 $Y_i$ are standard uniform and the joint distribution function of $(Y_1,\dots,
 Y_n)$ is a copula given by $\Copula_n(u_1,\ldots,u_n) = \sqrt{u_1} \times
 \prod_{j=1}^{n-1} \Copula_{\FN}(\sqrt{u_{j}},\sqrt{u_{j+1}}) \times
 \sqrt{u_n}$, $u_1,\dots, u_n \in [0,1]$.  Clearly, $\Cdiag_n(u)=u^n$ for all $u
 \in [0,1]$, i.e.\ that $C_n$ has the same power diagonal as
 $\Pi_n$. This means that if we take any continuous distribution function
 $F$, the maxima of the sequence $X_i = F^{-1}(Y_i)$, $i \in
 \IN$, behave in the same way as the maxima of the associated iid sequence. Yet,
 $\Copula_{\FN}$ and hence also
 $\Copula_n$ is not an extreme value copula, so that
 $(X_i)$ is not meta-extreme.
  \end{example}

  \subsection{Stationary sequences with short-range extremal dependence}\label{sec:extremes:for:time:series}

  We now show that asymptotic power diagonals are inherent to strictly
  stationary sequences with a certain form of asymptotic independence in the
  tail. Conditions that formalize the latter property are regularly used to
  study maxima of strictly stationary time series, see e.g.\
  \cite[Chapter~3]{LeadbetterLindgrenRootzen1983}, or
  \cite[Chapter~10]{BeirlantGoegebeurTeugelsSegers2004} and
  \cite[Chapter~4]{EmbrechtsKluppelbergMikosch1997}. %

 \begin{definition}\label{def:3.3}
   A strictly stationary sequence $X_1,X_2, \ldots \sim F$ is said to satisfy:
   \begin{enumerate}[label=(\roman*), labelwidth=\widthof{(iii)}]
   \item The \emph{distibutional mixing condition} $\CondDu$ if for any integers
     $p,q,n$ and indices $1 \le i_1 < \ldots < i_p < j_1 < \ldots < j_q \le n$
     such that $j_1 -i_p \ge s$,
     $\abs{\Prob(\max_{i \in A \cup B} X_i \le u_n) - \Prob(\max_{i \in A} X_i
       \le u_n)\Prob(\max_{i \in B} X_i \le u_n)} \le \alpha(n,s)$, where
     $A=\{i_1,\ldots, i_p\}$, $B=\{j_1,\ldots,j_q\}$, and $\alpha(n,s_n) \to 0$
     as $n\to \infty$ for some positive integer sequence $s_n = o(n)$.
   \item The \emph{anticlustering condition} $\mathcal{D}^\prime(u_n)$ if
     $\lim_{k \to \infty} \limsup_{n\to \infty} n \sum_{j=2}^{\lfloor n/k
       \rfloor} \Prob(X_1 > u_n, X_j > u_n) = 0$.
   \end{enumerate}
 \end{definition}

 The following result, relating $\CondDu$ and $\mathcal{D}^\prime(u_n)$ to the
 behavior of $\delta_n$, is a consequence of
 Theorem~\ref{theorem_continuous_limit}~\ref{mainThm:2} and limit theorems for
 maxima of stationary series in
 \cite{Leadbetter1974,Leadbetter1983,LeadbetterLindgrenRootzen1983}.

 \begin{corollary}\label{corollary_copula_diagonal_extremal_index}
   Let $X_1,X_2,\ldots \sim \pMargin$ be a strictly stationary sequence. Suppose
   that $\pMargin$ is continuous and satisfies $\pMargin \in \MDA(\EVDist)$ with
   normalizing sequences $(\cn_n)$, $\cn_n > 0$ and
   $(\dn_n)$. %
   \begin{enumerate}[label=(\roman*), labelwidth=\widthof{(iii)}]
   \item\label{item:diag:1} If the condition $\mathcal{D}(u_n)$ is satisfied
     with $u_n = \cn_n x + \dn_n$ for each $x$ such that $\EVDist(x)>0$ and
     there exists a $u \in (0,1)$ such that $\pD_n^{\rate}(u) \to \gamma$ where
     $\gamma \in (0,1)$ and $r_n = n$ for each $n \in \IN$, then the sequence
     $X_1,X_2,\ldots \sim \pMargin$ has an asymptotic power diagonal with index
     $\theta \in (0,1]$ and $\theta$ is the extremal index of $(X_i)$, meaning
     that $(M_n-\dn_n)/\cn_n \indist \EVDist^\theta$.
   \item\label{item:diag:2} If the conditions $\mathcal{D}(u_n)$ and
     $\mathcal{D}^\prime(u_n)$ hold with $u_n = \cn_n x + \dn_n$ for each
     $x \in \IR$, then the sequence $X_1,X_2,\ldots \sim \pMargin$ has an asymptotic
     power diagonal with index $\theta=1$ and the rate may be chosen as
     $r_n = n$ for each $n \in \IN$.
   \end{enumerate}
 \end{corollary}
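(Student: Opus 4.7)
The plan is to translate back and forth between statements about $\Prob(M_n\le u_n)$ for threshold sequences of the form $u_n=\cn_n x+\dn_n$ and statements about the diagonal power distortion $\pD_n^n(u)$, and then to invoke the classical Leadbetter theory of extremes of strictly stationary sequences under $\CondDu$ and $\mathcal{D}^\prime(u_n)$ (e.g.\ \cite{Leadbetter1974,Leadbetter1983,LeadbetterLindgrenRootzen1983}) to describe the former.

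The bridge I would establish first is a Lipschitz comparison. For fixed $u\in(0,1)$, choose $x$ with $\EVDist(x)=u$ and set $u_n=\cn_n x+\dn_n$; the assumption $\pMargin\in\MDA(\EVDist)$ gives $n\log\pMargin(u_n)\to\log u$, so a Taylor expansion at $1$ yields $\pMargin(u_n)-u^{1/n}=o(1/n)$. Because every $n$-variate copula is $1$-Lipschitz in each argument, $\Cdiag_n$ is $n$-Lipschitz on $[0,1]$, whence
\begin{equation*}
\bigl|\Prob(M_n\le u_n)-\pD_n^n(u)\bigr|=\bigl|\Cdiag_n(\pMargin(u_n))-\Cdiag_n(u^{1/n})\bigr|\le n\,|\pMargin(u_n)-u^{1/n}|=o(1).
\end{equation*}
Thus the two sequences share the same limit when one exists. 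For part~\ref{item:diag:1}, I apply this bridge with $u=u_0$ and $x_0=\EVDist^{-1}(u_0)$ to translate the hypothesis $\pD_n^n(u_0)\to\gamma$ into $\Prob(M_n\le \cn_n x_0+\dn_n)\to\gamma$. Since $n(1-\pMargin(u_n))\to-\log u_0>0$ and $\CondDu$ is in force, Leadbetter's extremal index theorem (e.g.\ \cite[Theorem~3.7.1]{LeadbetterLindgrenRootzen1983}) produces an extremal index $\theta\in(0,1]$ with $\gamma=u_0^\theta$, and moreover $\Prob\{(M_n-\dn_n)/\cn_n\le x\}\to\EVDist^\theta(x)$ for every $x$ with $\EVDist(x)>0$. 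Running the Lipschitz bridge backwards then gives $\pD_n^n(u)\to u^\theta$ for every $u\in(0,1)$; the endpoints $u\in\{0,1\}$ are immediate from $\Cdiag_n(0)=0$ and $\Cdiag_n(1)=1$. This establishes the asymptotic power diagonal with index $\theta$, and the stated weak convergence of the normalized maxima is part of what was just derived.

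For part~\ref{item:diag:2}, adding $\mathcal{D}^\prime(u_n)$ to $\CondDu$ is precisely the setting of the classical Leadbetter theorem (e.g.\ \cite[Theorem~3.4.1]{LeadbetterLindgrenRootzen1983}), which yields $\Prob(M_n\le u_n)-\pMargin^n(u_n)\to 0$ for every $x$. Combined with $\pMargin^n(u_n)\to\EVDist(x)=u$ and the Lipschitz bridge, this gives $\pD_n^n(u)\to u$ for all $u\in[0,1]$, i.e.\ an asymptotic power diagonal with index $\theta=1$ and rate $r_n=n$. The main obstacle in the whole argument is the sharpness of the Lipschitz comparison: because $\Cdiag_n$ is only $n$-Lipschitz, an estimate of order $1/n$ on $|\pMargin(u_n)-u^{1/n}|$ would be useless, and extracting the sharper $o(1/n)$ bound requires a second-order expansion of $\log\pMargin(u_n)$ at $1$ that relies on the MDA hypothesis.
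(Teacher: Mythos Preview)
Your approach is correct and matches the paper's proof: both translate between $\Prob(M_n\le u_n)$ and $\pD_n^n(u)$ via the $n$-Lipschitz property of $\Cdiag_n$ combined with an $o(1/n)$ estimate on $|\pMargin(u_n)-u^{1/n}|$, and then invoke Leadbetter's theory (the paper cites \cite[Theorem~2.2]{Leadbetter1983} and \cite[Theorem~3.5.2]{LeadbetterLindgrenRootzen1983}). The paper packages the backward direction as an appeal to Theorem~\ref{theorem_continuous_limit}~\ref{mainThm:2}, whose proof is exactly your Lipschitz bridge, so the two arguments are essentially identical.
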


 \begin{remark}\label{rem:3.1}
   When the rate is $\rate(n) = n$, $n \in \IN$, Lipschitz continuity of
   copulas implies that $\abs{\pD_n^{\rate}(v_n) - \pD_n^{\rate}(u)} \to 0$ as
   $n\to \infty$ whenever $v_n \to u$ where $u \in (0,1)$, as argued in the
   proof of
   Theorem~\ref{theorem_continuous_limit}~\ref{mainThm:2}. Consequently,
   provided that $\pMargin \in \MDA(\EVDist)$ with normalizing sequences
   $(\cn_n)$, $\cn_n > 0$ and $(\dn_n)$, the condition that $\pD_n^{\rate}(u)$
   converges for some $u \in (0,1)$ is equivalent to
   $\Prob(M_n \le \cn_n x + \dn_n)$ converging for some $x$ such that
   $\EVDist(x) \in (0,1)$.

   Finally, if $F \in \MDA(\EVDist)$ and $(X_i)$ has extremal index
   $\theta \in (0,1]$, meaning that $(M_n-\dn_n)/\cn_n$ converges weakly to
   $H^\theta$ with the same normalizing sequences $(\cn_n)$ and $(\dn_n)$ that
   stabilize the maxima of the associated iid sequence
   $X_1^*, X_2^*,\ldots \sim \pMargin$, then
   Theorem~\ref{theorem_continuous_limit}~\ref{mainThm:2} implies that $(X_i)$
   has an asymptotic power diagonal with index $\theta$ and rate $r_n = n$,
   $n\in \IN$. Thus, for large enough $n$,
   $\pD_n^{\rate}(u) = \Cdiag_n(u^{1/n}) \approx \pD(u) = u^{\theta}$, so that
   $\Cdiag_n(u) \approx u^{\theta n}$, where $u^{\theta n}$ may be interpreted
   as the copula diagonal of $\theta n$ independent variables.
 \end{remark}

 The moving maximum process in Example~\ref{example_moving_max} is a case in
 point where Corollary~\ref{corollary_copula_diagonal_extremal_index}~\ref{item:diag:1}
 applies. The
 $\mathcal{D}(u_n)$ condition holds and the extremal index equals
 $\theta = 1/(k+1)$, $k\geq 0$
 \cite[Example~10.5]{BeirlantGoegebeurTeugelsSegers2004}. The fact that the
 moving maximum process has an asymptotic power diagonal could thus have been
 alternatively obtained from
 Corollary~\ref{corollary_copula_diagonal_extremal_index}~\ref{item:diag:1}.

 As we illustrate in the next example,
 Corollary~\ref{corollary_copula_diagonal_extremal_index} reveals facts about
 the limiting behavior of copula diagonals of stationary sequences whose copulas
 may be intractable or not even explicit. This is the case for most classical
 time series models, such as ARMA or GARCH processes (although we note that copulas have
 also been used explicitly to construct time series models, examples of the
 latter are Markov processes \cite{DarsowNguyenOlsen1992}, models that use
 neural networks to capture cross-sectional dependence
 \cite{hofertprasadzhu2022}, or vine copula models \cite{NaglerKrugerMin2022}).

 \begin{example}\label{ex:3.4}
   Consider a stationary Gaussian sequence $(Y_i)$, meaning that all
   finite-dimensional distributions are multivariate normal. From Sklar's
   theorem, the copula $\Copula_n$ of $(Y_1,\dots, Y_n)$ is a Gaussian
   copula. Although the univariate normal distribution is in the maximum domain
   of attraction of the Gumbel extreme value distribution $\Lambda$, the tail
   properties of $\Cdiag_n$ may not be easy to investigate. For example, take
   the Gaussian AR(1) process given by $Y_n = \phi Y_{n-1} + Z_n$ for
   $n\in \mathbb{Z}$, where $\phi\in(-1,1)$ and $(Z_n)$ is an iid sequence with
   $Z_n \sim \Normal(0,\sigma^2)$. It is easily shown that
   $(Y_1,\ldots,Y_n)\sim\Normal(\bm{0},\Sigma_n)$ with
   $(\Sigma_n)_{ij} = (\sigma^2 \phi^{\abs{i-j}})/(1-\phi^2)$.
   Denote by $\Phi$ and $\Phi_{\Sigma_n}$ the distribution function of the
   $\Normal(0,1)$ and $\Normal(\bm{0}, \Sigma_n)$, respectively. The diagonal of
   $\Copula_n$ is then
   $\Cdiag_n(u) = %
   \Phi_{\Sigma_n}\bigl(\sigma\Phi^{-1}(u)/\sqrt{1-\phi^2},\ldots,\sigma\Phi^{-1}(u)/\sqrt{1-\phi^2}\bigr)$, $u \in (0,1)$.

   It is not easy to investigate the limiting behavior of
   $\Cdiag_n(u^{1/\rate_n})$ for some suitable rate $r$.  However, for a
   stationary Gaussian sequence $(V_i)$, the so-called Berman condition
   \begin{align}\label{eq:Berman}
     \lim_{n\to\infty} \cov(V_1,V_n) \ln(n) = 0
   \end{align}
   ensures that the $\mathcal{D}(u_n)$ and $\mathcal{D}^\prime(u_n)$ conditions
   hold for any $u_n = \an_n x + \bn_n$ and $x \in \IR$, where $(\an_n)$ and
   $(\bn_n)$ are the normalizing sequences of the associated iid sequence
   \cite[Lemma~4.4.7]{EmbrechtsKluppelbergMikosch1997}. Corollary~\ref{corollary_copula_diagonal_extremal_index}~\ref{item:diag:2}
   thus ensures that $(V_i)$ has an asymptotic power diagonal with index
   $\theta=1$ and rate $r_n=n$. This is the case for the above AR(1) process,
   because $\cov(Y_1,Y_n) = \sigma^2 \phi^n/(1-\phi^2)$ \cite[Example~4.4.9 and
   Example~7.1.1]{EmbrechtsKluppelbergMikosch1997}.

   Our framework now allows us to derive the limiting behavior of $M_n$ for any
   sequence $(X_i)$ of the form $X_i = F^{-1}[\Phi\{(Y_i-\mu)/\sigma\}]$ where
   $F$ is a continuous distribution function and $(Y_i)$ is a stationary
   Gaussian sequence with $Y_i \sim \Normal(\mu,\sigma^2)$ that
   satisfies~\eqref{eq:Berman}. Indeed, because the copulas of
   $(X_1,\dots, X_n)$ and $(Y_1,\dots, Y_n)$ are identical,
   $\pD_n^{\rate}(u) \to u$ as $n\to \infty$ with rate $r_n=n$. If
   $F \in \MDA(\EVDist)$,
   Theorem~\ref{theorem_continuous_limit}~\ref{mainThm:1} implies that
   \eqref{eq:central} holds with $\pG=\EVDist$ and the normalizing sequences of
   the associated iid sequence.
 \end{example}

 Example~\ref{ex:3.4} can be generalized to a number of stochastic processes for
 which the extremal index is known to be non-zero. This includes the
 uniform autoregressive process or the GARCH(1,1) process; see
 \cite[p.~142]{mcneilfreyembrechts2015} or \cite[Section~3]{Ferreira2018} for a
 list of suitable examples.

 While the distributional mixing condition rules out long-range dependence in
 the tail, this is not the case for Theorem~\ref{theorem_continuous_limit} and
 Theorem~\ref{theorem-FTG2}.  The final result in this section characterizes sequences
 with asymptotic power diagonals for which the results in
 Section~\ref{section_extremes_and_copula_diagonals} imply that the limit in
 \eqref{eq:central} is generalized extreme value and yet the $\mathcal{D}(u_n)$
 condition is violated.

 \begin{proposition}\label{prop:Dun-violated}
   Let $X_1,X_2, \ldots \sim \pMargin$ be an exchangeable sequence with an
   asymptotic power diagonal with index $\theta > 0$ and rate
   $\rate : \IN \to (0,\infty)$, $\lim_{n\to\infty}\rate(n)=\infty$. Assume that
   there exists a bijection $\lambda:(0,\infty) \to (0,\infty)$ such that
   $\rate_{\lceil nt \rceil}/\rate_n \to \lambda(t)$ for all $t > 0$ and suppose
   that $\lambda$ restricted to $(0,1)$ is not linear, i.e., not of the form
   $\lambda(t) = \alpha t$, $t \in (0,1)$, $\alpha > 0$. Suppose
   further that $\delta_n$ is strictly increasing for all $n \in \IN$ and that
   \eqref{eq:central} holds for some normalizing sequences $(\cn_n)$,
   $\cn_n > 0$, and $(\dn_n)$ and non-degenerate distribution $\pG$. Then
   $\pG \in \GEV$ while $\mathcal{D}(u_n)$ is violated for all thresholds of the
   form $u_n = \cn_n x + \dn_n$ with $x$ such that $\pG(x) \in (0,1)$.
 \end{proposition}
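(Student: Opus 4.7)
My plan is to derive $\pG\in\GEV$ directly from Theorem~\ref{theorem-FTG2} and then to obtain the contradiction with $\mathcal{D}(u_n)$ by showing that this condition would force $\rate_n$ to be proportional to $n$, which in turn makes $\lambda$ linear on $(0,\infty)$. For the first assertion, the hypotheses of Theorem~\ref{theorem-FTG2} are all in place: the asymptotic power diagonal of index $\theta$ provides $\pD_n^{\rate}(u) \to \pD(u) = u^\theta$ pointwise, and since $\theta > 0$ this $\pD$ is a continuous strictly increasing bijection of $[0,1]$; condition~\ref{FTG:1} holds by assumption on $\rate$ and $\lambda$; and $\Cdiag_n$ is strictly increasing by hypothesis. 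Theorem~\ref{theorem-FTG2} therefore yields $\pG = \pD\circ\EVDist = \EVDist^\theta$ for some $\EVDist\in\GEV$, and because positive powers of a GEV distribution remain of the same GEV type by max-stability (as recalled in Remark~\ref{rem_rate_uniqueness}), $\pG\in\GEV$.

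For the second assertion I argue by contradiction: suppose $\mathcal{D}(u_n)$ holds with $u_n = \cn_n x + \dn_n$ for every $x$ with $\pG(x)\in(0,1)$. Exchangeability implies strict stationarity, so classical extremal-index theory for stationary sequences satisfying $\mathcal{D}(u_n)$ yields a single extremal index $\theta_*\in(0,1]$ such that $\pMargin^n(\cn_n x + \dn_n) \to \pG(x)^{1/\theta_*} = \EVDist(x)^{\theta/\theta_*}$ for every $x$ with $\pG(x)\in(0,1)$. Since $\EVDist^{\theta/\theta_*}$ is of the same GEV type as $\EVDist$, this places $\pMargin$ in the maximum domain of attraction of $\EVDist$, with normalizing sequences that agree with $(\cn_n,\dn_n)$ up to convergence to types. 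Remark~\ref{rem:3.1} then supplies pointwise convergence of $\pD_n^{\tilde\rate}(u)$ at some $u\in(0,1)$ for the rate $\tilde\rate_n = n$, and Corollary~\ref{corollary_copula_diagonal_extremal_index}~\ref{item:diag:1} yields that $(X_i)$ also has an asymptotic power diagonal with respect to $\tilde\rate$, with index $\theta_*$. Two rates $\rate$ and $\tilde\rate_n = n$ therefore produce non-degenerate power limits $u^\theta$ and $u^{\theta_*}$ of the diagonal power distortion; the uniqueness analysis of Remark~\ref{rem_rate_uniqueness} rules out $\rate_n/n \to 0$ or $\infty$ (which would force a degenerate limit) and forces $\rate_n/n \to \beta$ for some $\beta\in(0,\infty)$, with $u^{\theta\beta} = u^{\theta_*}$ giving $\beta = \theta_*/\theta$. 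Consequently
\begin{align*}
  \frac{\rate_{\lceil tn \rceil}}{\rate_n} = \frac{\rate_{\lceil tn \rceil}}{\lceil tn \rceil}\cdot\frac{\lceil tn \rceil}{n}\cdot\frac{n}{\rate_n} \longrightarrow \beta \cdot t \cdot \beta^{-1} = t,
\end{align*}
so $\lambda(t) = t$ for all $t > 0$, contradicting the hypothesis that $\lambda$ restricted to $(0,1)$ is not linear.

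The main obstacle is the extremal-index step: one must pass from $\mathcal{D}(u_n)$ on the family $\{\cn_n x + \dn_n : \pG(x) \in (0,1)\}$ to a full-sequence convergence of $\pMargin^n(u_n)$ with a single extremal index $\theta_*$, thereby identifying $(\cn_n, \dn_n)$ with the iid normalizing sequences of $\pMargin$ up to convergence to types. Once this identification is secured, the remainder reduces to an arithmetic comparison of rates governed by Remark~\ref{rem_rate_uniqueness}.
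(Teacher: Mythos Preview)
Your derivation of $\pG\in\GEV$ via Theorem~\ref{theorem-FTG2} is correct and matches the paper. For the violation of $\mathcal{D}(u_n)$, however, the step you yourself flag as the main obstacle is a genuine gap, not a technicality. The assertion that under $\mathcal{D}(u_n)$ the convergence $\Prob(M_n\le u_n)\to \pG(x)\in(0,1)$ forces $\pMargin^n(u_n)\to \pG(x)^{1/\theta_*}$ for some $\theta_*\in(0,1]$ is not a theorem in \cite{Leadbetter1983} or \cite{LeadbetterLindgrenRootzen1983}: those results presuppose that the thresholds already satisfy $n\{1-\pMargin(u_n)\}\to\tau\in(0,\infty)$ (equivalently $\pMargin\in\MDA(\EVDist)$ with $(\cn_n,\dn_n)$ equivalent to the iid normalizing sequences) and then deduce the behaviour of $\Prob(M_n\le u_n)$. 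You need the reverse implication, and nothing in the hypotheses rules out a priori that $n\{1-\pMargin(u_n)\}\to\infty$, in which case no $\theta_*>0$ exists. The circularity is real: both Corollary~\ref{corollary_copula_diagonal_extremal_index} and Remark~\ref{rem_rate_uniqueness}, which you invoke afterwards, already require $\pMargin\in\MDA(\EVDist)$, which is precisely what you are trying to extract from the extremal-index step.

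The paper avoids this entirely by arguing directly. From the proof of Theorem~\ref{theorem-FTG2} one already has $\pMargin^{\rate_n}(u_n)\to\EVDist(x)$; together with $\rate_{\lceil nt\rceil}/\rate_n\to\lambda(t)$, the uniform convergence $\pD_m^\rate\to(\cdot)^\theta$, and exchangeability, this yields $\Prob(M_{\lceil nt\rceil}\le u_n)\to \EVDist(x)^{\theta\lambda(t)}$ for every $t>0$. Since $\lambda$ restricted to $(0,1)$ is not linear, a result of Acz\'el on Cauchy's functional equation furnishes $t_1,t_2\in(0,1)$ with $t_1+t_2\in(0,1)$ and $\lambda(t_1+t_2)\neq\lambda(t_1)+\lambda(t_2)$. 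Choosing $A=\{1,\dots,\lceil nt_1\rceil\}$ and $B=\{n-\lceil nt_2\rceil+1,\dots,n\}$ (gap of order $n$, eventually exceeding any $s_n=o(n)$) and using exchangeability once more, the three relevant block-maximum probabilities converge to $\EVDist(x)^{\theta\lambda(t_1+t_2)}$, $\EVDist(x)^{\theta\lambda(t_1)}$ and $\EVDist(x)^{\theta\lambda(t_2)}$, so the $\mathcal{D}(u_n)$ discrepancy tends to the nonzero quantity $\bigl|\EVDist(x)^{\theta\lambda(t_1+t_2)}-\EVDist(x)^{\theta\{\lambda(t_1)+\lambda(t_2)\}}\bigr|$. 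This exhibits a concrete violation without any appeal to $\pMargin\in\MDA$. If you try to fill your gap honestly by applying the $\mathcal{D}(u_n)$ block approximation to such pairs $A,B$, you are led straight to additivity of $\lambda$ on $(0,1)$---i.e., to the paper's computation---so the detour through the extremal index is unnecessary.
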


 Prime examples of sequences which meet the conditions of
 Proposition~\ref{prop:Dun-violated} are exchangeable sequences with power
 diagonals, where the power $\eta_n$ grows slower than $n$. For example, we
 can consider the meta-extreme sequence $(X_i)$ from Example~\ref{ex:Mai} with
 continuous margin $\pMargin \in \MDA(\EVDist)$ for some $\EVDist \in \GEV$ and
 the logistic stdf $\ell_{n,\text{Gu}}$ with
 $\theta > 1$. As explained in Section~\ref{sec:3.1}, the Gumbel--Hougaard
 copula $\Copula_n$ has a power diagonal with
 $\eta_n =\ell_{n,\text{Gu}}(1,\dots,1) = n^{1/\theta}$. We then have $r_n = \eta_n$ and $\lambda(t) = t^{1/\theta}$, which is not
 linear. %
 \section{Sequences with non-GEV limits}\label{sec:examples}
 In this section, we study sequences for which the limiting distribution in
 \eqref{eq:central} is no longer generalized extreme value. We first investigate
 meta-Archimax sequences in Section~\ref{section_Archimedean_Archimax} and two
 generalizations in Section~\ref{section_extensions}, one to arbitrary
 exchangeable sequences and one to mixtures of not necessarily exchangeable
 sequences.

 \subsection{Meta-Archimax sequences}\label{section_Archimedean_Archimax}
 In this section, we treat sequences defined as follows.

 \begin{definition}\label{def:4.1}
   A continuous function $\gen : [0,\infty) \to [0,1]$ which satisfies
   $\gen(0) = 1$, $\gen(t) \to 0$ as $t\to \infty$ and which is strictly
   decreasing on $[0, \inf\{x : \gen(x) = 0\}]$ is called an \emph{(Archimedean)
   generator}. A sequence $(X_i)$ of identically distributed random variables is
   called a \emph{meta-Archimax sequence} with Archimedean generator $\gen$ and
   a sequence $(\STDF_n)$ of $n$-variate stdfs, if,
   for each $n \ge 2$, the copula $\Copula_n$ of $(X_1,\dots, X_n)$ is an
   Archimax copula with generator $\gen$ and stdf
   $\STDF_n$, i.e.\
   \begin{align}\label{eq_Archimax_copula}
     \Copula_{n}(u_1,\ldots,u_n) = \gen[\STDF_n\{\invgen(u_1),\ldots,\invgen(u_n)\}],
   \end{align}
   for all $u_1,\dots, u_n \in [0,1]$. If, for all $n \ge 1$ and $x_1,\dots, x_n \in [0,\infty)$, $\ell_n(x_1,\dots, x_n) = x_1 + \ldots + x_n$, the sequence is called \emph{meta-Archimedean}.
 \end{definition}

 We emphasize that the generator of a meta-Archimax sequence does not depend on
 $n$; its inverse $\invgen$ is well-defined on $(0,1]$ and
 $\invgen(0):=\inf\{x : \gen(x) = 0\}$ by
 convention. Table~\ref{table_generator_limit} provides several well-known
 parametric families of generators. For additional examples, see e.g.\
 \cite[Chapter~4.6]{Nelsen2006} and \cite{Hofert2011}; these generators can be
 further transformed to obtain richer classes of models as in
 \cite[Table~2]{Charpentier/Segers:2009} or \cite{hofertscherer2011}. When we
 say that $(X_i)$ is a meta-Archimax sequence with generator $\gen$ and stdfs
 $(\STDF_n)$, we implicitly assume that $\Copula_n$ in
 \eqref{eq_Archimax_copula} is a bona-fide copula for each $n$ and that
 $(\STDF_n)$ satisfies
 $\STDF_{n+1}(x_1,\dots,x_{n},0) = \STDF_n(x_1,\dots, x_n)$ for all $n \in \IN$
 and $x_1,\dots, x_n \in [0,\infty)$, where $\ell_1(x) = x$ by convention. The
 question when a given $\gen$ and $(\STDF_n)$ give rise to a sequence of
 Archimax copulas can only be answered in special cases; we elaborate on this in
 Example~\ref{ex_Archimedean_sequence} and Example~\ref{ex_Archimax_sequence}.

 Archimax and notably Archimedean copulas have been studied extensively, viz.\
 \cite{Caperaa/Fougeres/Genest:2000,CharpentierFougeresGenestNeslehova2014,Chatelain/Fougeres/Neslehova:2020,mcneilneslehova2009,Nelsen2006}. It
 is easily seen that when $\gen(t) = e^{-t}$ for all $t \ge 0$, the
 meta-Archimax sequence with generator $\gen$ reduces to a meta-extreme sequence
 with stdfs $(\STDF_n)$ that we investigated in
 Section~\ref{sec:3.1}. Example~\ref{ex_Archimedean_sequence} and
 Example~\ref{ex_Archimax_sequence} show explicit constructions of meta-Archimedean and
 meta-Archimax sequences for arbitrary generators, relating them to scale
 mixtures of certain iid or meta-extreme sequences.

 \begin{table}
   \centering\small
   \begin{tabular}{l @{\hskip -0.5cm} c c c c}
     \toprule
     \multicolumn{1}{l}{Copula} & \multicolumn{1}{c}{Generator $\gen(t)$} & \multicolumn{1}{c}{Inverse $\invgen(u)$} & \multicolumn{1}{c}{$\rho$} & \multicolumn{1}{c}{$-\gen'(0)$}\\
     \midrule
     Independence & $\exp(-t)$ & $-\log(u)$ & $1$ & $1$\\
     Ali-Mikhail-Haq $(\theta \in (0,1))$ & $\frac{1-\theta}{\exp(t)-\theta}$ & $\log\left( \frac{1-\theta(1-u)}{u}\right)$ & $1$ & $\frac{1}{1-\theta}$\\
     Clayton $(\theta > 0)$ & $(1+ t)^{-1/\theta}$ & $u^{-\theta}-1$ & $1$ & $1/\theta$\\
     Frank $(\theta > 0)$ & $-\frac{1}{\theta}\log\left(1+\exp(-t)\left(\e^{-\theta}-1\right)\right)$ &  $-\log\left(\frac{\exp(-\theta u)-1}{\exp(-\theta)-1}\right)$ &  $1$ & $\frac{\e^{\theta}-1}{\theta}$\\
     Example~\ref{ex:Ballerini} & $1/\left(t(1+1/t)^{1+t}\right)$ & (no closed form) & $1$ & $\infty$\\
     Gumbel-Hougaard $(\theta > 1)$ & $\exp\left(-t^{1/\theta}\right)$ & $(-\log(u))^{\theta}$ & $1/\theta$ & $\infty$\\
     Joe $(\theta > 1)$ & $1 - \left(1-\exp(-t)\right)^{1/\theta}$ & $-\log\left(1-(1-u)^{\theta}\right)$ & $1/\theta$ & $\infty$\\
     \bottomrule
   \end{tabular}
   \vspace{0.4em}
   \caption{Completely monotone generators, their inverses, coefficients of
     regular variation $\rho$ such that $1-\gen(1/x) \in \RV_{-\rho}$ and
     negative right-hand side generator derivatives at $0$ for selected
     Archimedean copulas. Note that $-\gen'(0) < \infty$ implies $\rho=1$ as
     discussed in Example~\ref{Example_Archimax_no_tail_dep}.}
   \label{table_generator_limit}
 \end{table}

 \begin{example}\label{ex_Archimedean_sequence}
   From \cite{Kimberling:1974} it is well-known that $\gen$ is an Archimedean
   generator of a meta-Archimedean sequence if and only if it is completely
   monotone, i.e.\ differentiable on $(0,\infty)$ of all orders with $k$-th
   derivative satisfying $(-1)^k \gen^{(k)}(\cdot) \ge 0$. This result allows us
   to construct an arbitrary meta-Archimedean sequence
   $X_1,X_2,\ldots\sim\pMargin$ explicitly as follows.
   The Bernstein--Widder theorem \cite[p.~439]{feller1971} implies that a
   completely montone $\gen$ must be a Laplace--Stieltjes transform of a
   positive random variable $V$ (also called \emph{frailty}), i.e.\
   $\gen(t) = \E[e^{-tV}]$, $t \ge 0$. Let $(E_i)$ be a sequence of iid
   unit exponential random variables independent of $V$. As observed in
   \cite{Marshall/Olkin:1988}, the survival copula of the multiplicative hazard
   (or frailty) model $(E_1/V,\dots,E_n/V)$ is an Archimedean copula with
   generator $\gen$, viz.\ $\Copula_{n}(u_1,\ldots,u_n) = \gen\{\invgen(u_1) + \dots + \invgen(u_n)\}$, $u_1,\dots, u_n \in (0,1)$. Consequently, the sequence $(Y_i)$ given by $Y_i = V/E_i$ is then meta-Archimedean with
   generator $\gen$ and a continuous univariate margin given by $\gen(1/x)$ for $x > 0$ and
   by $0$ otherwise. To obtain a meta-Archimedean sequence with generator $\gen$ and an
   arbitrary univariate margin $F$, it suffices to set $X_i = F^{-1}\{\gen(E_i/V)\}$ for all $i \in \IN$.
 \end{example}

 \begin{example}\label{ex_Archimax_sequence}
   When $\gen$ is a completely monotone Archimedean generator,
   \cite{CharpentierFougeresGenestNeslehova2014} show that
   \eqref{eq_Archimax_copula} is a bona-fide copula for any $n \ge 2$ and any
   $n$-variate stdf $\STDF_n$. Complete monotonicity
   of $\gen$ is not necessary for certain fixed sequences of $n$-variate stdfs
   \cite{mcneilneslehova2009,CharpentierFougeresGenestNeslehova2014}, but when
   it holds, it again allows us to use the Bernstein--Widder theorem to
   construct a meta-Archimax sequence explicitly, as follows.

   As in Example~\ref{ex_Archimedean_sequence}, let $V$ be a positive random
   variable with Laplace--Stieltjes transform $\gen$. Let also $(Z_i)$ be a
   meta-extreme sequence independent of $V$, with unit Fr\'echet margins and
   stdfs $\STDF_n$, as in
   Definition~\ref{def_meta_extreme_sequence}. Define the sequence $(Y_i)$ via
   $Y_i = VZ_i$ for all $i \ge
   1$. \cite[Remark~3.2]{CharpentierFougeresGenestNeslehova2014} implies that
   $(Y_i)$ is meta-Archimax with generator $\gen$ and stdfs $(\STDF_n)$. Its univariate margin is again given by $\gen(1/x)$
   for $x > 0$ and by $0$ otherwise. To generalize this construction to a
   meta-Archimax sequence with an arbitrary margin $F$, we set
   $X_i = F^{-1}\{\gen(E_i/V)\}$ for all $i \in \IN$, as in
   Example~\ref{ex_Archimedean_sequence}.
 \end{example}

 We now describe the limiting behavior of maxima of meta-Archimax sequences
 using the theory we developed in
 Section~\ref{section_extremes_and_copula_diagonals}. To this end, observe that
 the diagonal of an Archimax copula is of the form
 $\Cdiag_n(u) = \gen\{ \invgen(u) \eta_n\}$, where
 $\eta_n = \STDF_n(1,\dots, 1)$ is Smith's extremal coefficient as in
 Section~\ref{sec:3.1}. Therefore, the asymptotic properties of $\Cdiag_n$ will
 depend on the behavior of $\eta_n$ and of the Archimedan generator, notably its
 regular variation. We recall that a measurable function $f>0$ is regularly
 varying at $\infty$ with index $\rho\in\IR$, in notation $f \in \RV_{\rho}$, if
 it satisfies $\lim_{x\to\infty} f(t x)/f(x) = t^{\rho}$ for all $t > 0$, see
 e.g.\ \cite[Chapter~2]{BinghamGoldieTeugels1987}.

 \begin{theorem}\label{theorem_Archimedean_distortion_limit}
   Let $(X_i)$ be a meta-Archimax sequence with generator $\gen$ and stdfs $(\STDF_n)$.  For each $n \ge 1$, set
   $\STDFone_n = \STDF_n(1,\ldots,1)$ and define the rate
   $\rate : \IN \to (0,\infty)$ by
   $\rate(n) =\rate_n = 1/(1-\gen(1/\STDFone_n))$, $n \ge 1$.
   Furthermore assume:
   \begin{enumerate}[label=(\roman*), labelwidth=\widthof{(iii)}]
   \item\label{thm:A:dist:limit:1}$ \STDFone_n \to \infty$ as $n\to\infty$;
   \item\label{thm:A:dist:limit:2} $1 - \gen(1/\cdot) \in \RV_{-\rho}$ for $\rho \in (0,1]$.
   \end{enumerate}
   Then, for all $u \in [0,1]$, the diagonal power distortion
   $\pD_n^{\rate}(u) = \gen\{\STDFone_n \invgen(u^{1/\rate_n})\}$ converges to
   $\pD(u)=\gen\{(-\log u)^{1/\rho}\}$ as $n \to \infty$. If also the univariate
   marginal distribution $F$ of $(X_i)$ satisfies $F \in \MDA(\EVDist)$ for some
   $\EVDist \in \GEV$ and normalizing sequences $(\an_n)$, $\an_n>0$, and
   $(\bn_n)$, i.e.\ $F^n(\an_n x + \bn_n)\to \EVDist(x)$, $x \in \IR$,
   then \eqref{eq:central} holds with $\pG = \pD \circ \EVDist$ and normalizing
   sequences given by $\cn_n = \an_{\ceil{\rate_n}}$ and
   $\dn_n =\bn_{\ceil{\rate_n}}$.
 \end{theorem}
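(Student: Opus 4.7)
The assertion about weak convergence of $(M_n-d_n)/c_n$ is a direct consequence of Theorem~\ref{theorem_continuous_limit}~\ref{mainThm:1} once the pointwise convergence $\pD_n^\rate\to\pD$ is established and $\pD$ is shown to be continuous on $[0,1]$. The plan is therefore to focus on this limit and on the continuity of $\pD$, and then simply invoke Theorem~\ref{theorem_continuous_limit}~\ref{mainThm:1} to obtain the normalizing sequences $\cn_n=\an_{\ceil{\rate_n}}$ and $\dn_n=\bn_{\ceil{\rate_n}}$.

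As a preliminary step, I would verify $\rate_n\to\infty$, which follows from $\eta_n\to\infty$ together with continuity of $\psi$ at zero ($\psi(0)=1$). The boundaries $u=0,1$ are then direct: $\pD_n^\rate(1)=\psi(0)=1$ for every $n$, and $\pD_n^\rate(0)=\psi(\eta_n\psi^{-1}(0))=0$ for all large $n$, whether $\psi^{-1}(0)$ is finite or $+\infty$.

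For $u\in(0,1)$, set $v_n=\psi^{-1}(u^{1/\rate_n})$; since $\rate_n\to\infty$ we have $u^{1/\rate_n}\to 1$ and hence $v_n\to 0$. A first-order expansion of $u^{1/\rate_n}=\exp((\log u)/\rate_n)$ together with the definition $1/\rate_n=1-\psi(1/\eta_n)$ yields
\begin{equation*}
1-\psi(v_n)=1-u^{1/\rate_n}\sim(-\log u)\bigl(1-\psi(1/\eta_n)\bigr)
\end{equation*}
as $n\to\infty$. Setting $g(x)=1-\psi(1/x)$ and $w_n=1/v_n$ this reads $g(w_n)/g(\eta_n)\to -\log u>0$. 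The aim is to invert this ratio asymptotics using the hypothesis $g\in\RV_{-\rho}$ to conclude $w_n/\eta_n\to(-\log u)^{-1/\rho}$, equivalently $\eta_n v_n\to(-\log u)^{1/\rho}$; continuity of $\psi$ then gives $\pD_n^\rate(u)=\psi(\eta_n v_n)\to\psi\{(-\log u)^{1/\rho}\}=\pD(u)$, and continuity of $\pD$ on $[0,1]$ follows from continuity of $\psi$ with $\psi(0)=1$ and $\psi(\infty)=0$.

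The main obstacle is precisely this inversion step, because $w_n$ is defined only implicitly through $\psi^{-1}$ and the uniform convergence theorem cannot be invoked directly. The plan is first to exclude $w_n/\eta_n\to 0$ and $w_n/\eta_n\to\infty$ along any subsequence using Potter's bounds for $g\in\RV_{-\rho}$ with $\rho>0$; each alternative would force $g(w_n)/g(\eta_n)$ to $\infty$ or $0$ respectively, contradicting the finite positive limit $-\log u$, so that $t_n=w_n/\eta_n$ is trapped in a compact subset of $(0,\infty)$. The uniform convergence theorem then gives $g(t_n\eta_n)/g(\eta_n)-t_n^{-\rho}\to 0$, and a subsequential argument forces every cluster point of $(t_n)$ to equal $(-\log u)^{-1/\rho}$, pinning down the limit and completing the computation.
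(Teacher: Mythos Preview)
Your proposal is correct. The boundary cases, the verification that $r_n\to\infty$, the reduction to showing $\eta_n v_n\to(-\log u)^{1/\rho}$, and the final appeal to Theorem~\ref{theorem_continuous_limit}~\ref{mainThm:1} all match the paper. The difference lies in how the key limit $\eta_n\psi^{-1}(u^{1/r_n})\to(-\log u)^{1/\rho}$ is obtained.

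You work on the ``forward'' side: from $g=1-\psi(1/\cdot)\in\RV_{-\rho}$ and the ratio asymptotics $g(w_n)/g(\eta_n)\to -\log u$, you invert by hand via Potter's bounds (to trap $w_n/\eta_n$ in a compact set) and then the uniform convergence theorem. This is sound, and in fact once compactness is established the UCT already gives $t_n^{-\rho}\to -\log u$ directly, so the subsequential argument you mention is not strictly needed. The paper instead exploits the duality result \cite[Proposition~1(d)]{LarssonNeslehova2011} that $1-\psi(1/\cdot)\in\RV_{-\rho}$ is equivalent to $\psi^{-1}(1-1/\cdot)\in\RV_{-1/\rho}$. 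With the algebraic identity $\psi^{-1}(1-1/r_n)=1/\eta_n$ (from the definition of $r_n$) and the substitution $u^{1/r_n}=1-1/(x_nr_n)$ where $x_n=1/\{r_n(1-u^{1/r_n})\}\to 1/(-\log u)$, the target quantity becomes $\psi^{-1}\{1-1/(x_nr_n)\}/\psi^{-1}(1-1/r_n)$, and a single application of the UCT to $\psi^{-1}(1-1/\cdot)\in\RV_{-1/\rho}$ yields the limit immediately. Your route is self-contained and avoids citing the inverse-regular-variation equivalence, at the cost of redoing that inversion via Potter bounds; the paper's route is shorter but relies on the external lemma.
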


 The assumptions in Theorem~\ref{theorem_Archimedean_distortion_limit} are not
 restrictive. We already discussed assumption~\ref{thm:A:dist:limit:1} in
 Section~\ref{sec:3}: If $\eta_n$ had a finite limit as in
 Example~\ref{example_Cuadras_Auge}, we would be in the scope of
 Proposition~\ref{cor:r(n)finite} and the possible limits in \eqref{eq:central}
 would be too broad.  Assumption~\ref{thm:A:dist:limit:2} is satisfied by
 nearly all known Archimedean generators, including all listed in
 Table~\ref{table_generator_limit}, see
 \cite{Charpentier/Segers:2009,LarssonNeslehova2011}.

 \begin{remark}
   The limiting distribution $\pG$ obtained in
   Theorem~\ref{theorem_Archimedean_distortion_limit} can also be expressed in a
   different way. To see this, first write $\EVDist = \EVDist_{\EVI,\mu,\sigma}$
   for some $\EVI, \mu \in \IR$ and $\sigma > 0$. Using the fact that for all
   $x \in \IR$,
   $\{-\log\EVDist_{\EVI,\mu,\sigma}(x)\}^{1/\rho}=-\log
   \EVDist_{\rho\EVI,\mu,\rho\sigma}(x)$, we also have, for all
   $x \in \IR$,
   \begin{align}\label{eq:4alternative}
     \pG(x) = \gen \bigl\{ -\log \EVDist_{\rho\EVI,\mu,\rho\sigma}(x) \bigr\}.
   \end{align}
   Viewing $\gen(t^{1/\rho})$, $t\in[0,\infty)$, as an outer power
   transformation of the Archimedean generator $\gen$ when $\rho < 1$, the
   probabilistic interpretation in \cite{Hofert2011} does not apply here,
   because $\gen(t^{1/\rho})$ is a valid Archimedean generator only for
   $\rho \geq 1$ \cite[Theorem~4.5.1]{Nelsen2006}.
 \end{remark}

 \begin{remark}\label{rem:4wuethrich1}
   The alternative expression \eqref{eq:4alternative} also allows us to connect
   Theorem~\ref{theorem_Archimedean_distortion_limit} with the results of
   W\"uthrich \cite{Wuthrich2004}, who investigated maxima of meta-Archimedean
   sequences. The assumptions of Proposition~5.6 in the latter paper are the
   same as in Theorem~\ref{theorem_Archimedean_distortion_limit}, and the limit
   is precisely as in \eqref{eq:4alternative}, although the normalizing
   constants are different. Rather than investigating copula diagonals as done
   here, the approach in \cite{Wuthrich2004} relates $M_n$ to the maximum
   $\widetilde M_n$ of iid observations drawn from the univariate distribution
   function $\exp\{-\psi^{-1} \circ F\}$. While the normalizing sequences used
   in Theorem~\ref{theorem_Archimedean_distortion_limit} are subsequences of
   $(\an_n)$ and $(\bn_n)$ used to stabilize the iid maximum $M_n^*$,
   \cite{Wuthrich2004} uses the normalizing constants of $\widetilde M_n$. Clearly, the two sets of sequences must be related through the convergence to types theorem \cite[Proposition~0.2]{Resnick1987}.
   When $\pF \in \MDA(\EVDist_{\EVI,\mu,\sigma})$ with $\EVI > 0$, it is also
   possible to see this directly. In \cite{Wuthrich2004},
   $\cn_n = F^{-1}[ \gen \{\log n/(n-1)\}]$ and $\dn_n =0$, while the approach
   taken here gives
   $\cn_n = \an_{\ceil{\rate_n}} = F^{-1}\{(\ceil{\rate_n}-1)/\ceil{\rate_n}\}$
   and $\dn_n =\bn_{\ceil{\rate_n}}=0$. Assumption~\ref{thm:A:dist:limit:2} of
   Theorem~\ref{theorem_Archimedean_distortion_limit} implies that
   $1-\gen \{\log n/(n-1)\} \approx 1-\gen(1/n)$ so that if we set $n^*$ to be
   an integer such that $(n^*-1)/n^* \approx \gen \{\log n/(n-1)\}$, we obtain
   that $n^* \approx \ceil{1/\{1-\gen(1/n)\}}$. As $\eta_n = n$ for a meta-Archimedean sequence,
   $n^*\approx\ceil{\rate_n}$.
 \end{remark}

  As expected from Theorem~\ref{theorem_continuous_limit}, the distortion
  function $\pD$ identified in
  Theorem~\ref{theorem_Archimedean_distortion_limit} is a proper distribution
  function on $[0,1]$.  We formally state this result in the following
  corollary, while Figure~\ref{figure_density_shapes_D_Archimedean} illustrates
  the density of $\pD$ for various choices of $\gen$. As can be seen, the
  possible density shapes range from being constant to strictly increasing,
  unimodal or unbounded and thus cover a wide range of possible distortions.
  \begin{figure}
    \centering
    \includegraphics[width=\textwidth]{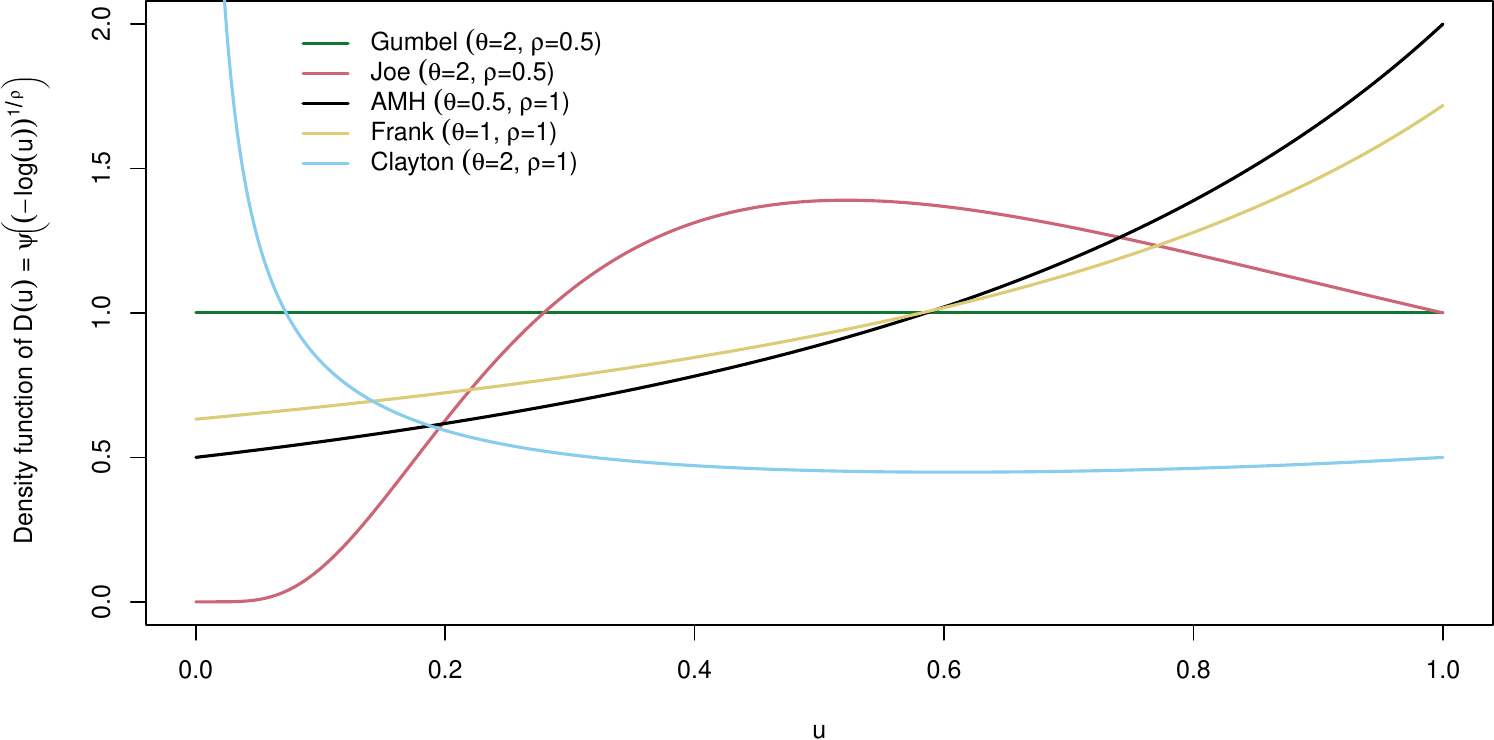}%
    \caption{Densities of $\pD(u) = \gen\{(-\log u)^{1/\rho}\}$ for different
      Archimedean generators from Table~\ref{table_generator_limit}.}
    \label{figure_density_shapes_D_Archimedean}
  \end{figure}
  \begin{corollary}\label{corollary_density_Archimedean_distortion}
    The function $\pD(u) = \gen\{(-\log u)^{1/\rho}\}$ is a distribution
    function on $[0,1]$, with corresponding quantile function
    $\qD(u)=\e^{-\{\invgen(u)\}^{\rho}}$, $u\in[0,1]$, and density
    \begin{align*}
      \dD(u) = \frac{-\gen'\{(-\log u)^{1/\rho}\} (-\log u)^{(1-\rho)/\rho}}{\rho u},\quad u\in(0,1).
    \end{align*}
    The values of $\dD$ at the boundary are given by
    $\dD(0) = \lim_{y \to \infty} -\gen'(y)y^{1-\rho}\e^{(y^{\rho})}/\rho$ and by
    $\dD(1) = -\gen'(0)$ if $\rho=1$ and $\dD(1) = \lim_{y \to 0} -\gen'(y)y^{1-\rho}/\rho$ if $\rho \in (0,1)$.
  \end{corollary}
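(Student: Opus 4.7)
The plan is to check that $\pD$ is a distribution function, invert $\pD$ to derive $\qD$, differentiate $\pD$ via the chain rule to obtain $\dD$, and evaluate the boundary values as one-sided limits.

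\textbf{Distribution function.} The map $u \mapsto (-\log u)^{1/\rho}$ is continuous and strictly decreasing from $(0,1]$ onto $[0,\infty)$ (with the convention $-\log 0 := \infty$). Composing with the continuous, non-increasing generator $\gen$ satisfying $\gen(0)=1$ and $\gen(\infty)=0$ shows that $\pD$ is continuous and non-decreasing on $[0,1]$ with $\pD(0)=0$ and $\pD(1)=1$. Alternatively, this is immediate from Theorem~\ref{theorem_continuous_limit}~\ref{mainThm:1} combined with Theorem~\ref{theorem_Archimedean_distortion_limit}, as $\pD$ arises there as the pointwise limit of a sequence of continuous copula diagonal power distortions.

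\textbf{Quantile function and density.} To invert $\pD$ on $(0,1]$, I solve $\pD(x)=u$ by applying $\invgen$ to both sides, obtaining $(-\log x)^{1/\rho} = \invgen(u)$ and hence $x = \e^{-\{\invgen(u)\}^{\rho}} = \qD(u)$. For the density, let $y(u) = (-\log u)^{1/\rho}$, so that $y'(u) = -(-\log u)^{(1-\rho)/\rho}/(\rho u)$. The chain rule then yields, for $u\in(0,1)$,
\begin{align*}
\dD(u) = \gen'\{y(u)\}\, y'(u) = \frac{-\gen'\{(-\log u)^{1/\rho}\}\,(-\log u)^{(1-\rho)/\rho}}{\rho u},
\end{align*}
matching the stated formula.

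\textbf{Boundary values.} Using the substitution $y = (-\log u)^{1/\rho}$, i.e.\ $u = \e^{-y^{\rho}}$, the density rewrites as $\dD(u) = -\gen'(y)\,y^{1-\rho}\e^{y^{\rho}}/\rho$. Letting $u \to 0^+$ corresponds to $y \to \infty$, which yields the stated limiting expression for $\dD(0)$. As $u \to 1^-$, $y \to 0^+$: for $\rho \in (0,1)$, the factor $y^{1-\rho} \to 0$ while $\e^{y^{\rho}} \to 1$, producing $\dD(1) = \lim_{y \to 0}-\gen'(y)\,y^{1-\rho}/\rho$; for $\rho = 1$, the density reduces to $-\gen'(-\log u)/u$, whose limit is $-\gen'(0)$ interpreted as the right-hand derivative of $\gen$ at $0$.

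\textbf{Main obstacle.} The argument is essentially elementary calculus once $\gen$ is known to be differentiable. The one delicate point is the regularity of $\gen$: Definition~\ref{def:4.1} only guarantees continuity and piecewise strict monotonicity, but the natural setting for Theorem~\ref{theorem_Archimedean_distortion_limit} (cf.\ Example~\ref{ex_Archimedean_sequence}) is that of completely monotone generators, which are infinitely differentiable on $(0,\infty)$. Under this regularity the chain rule and the one-sided boundary limits are justified, and no further obstacle arises.
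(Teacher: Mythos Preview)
Your proof is correct and follows essentially the same approach as the paper. The paper's own proof is terser but proceeds identically: it states that the distribution function, density, and quantile function follow immediately from properties of $\gen$ and the logarithm, and then uses the same change of variables $y = (-\log u)^{1/\rho}$ to obtain the boundary values of $\dD$ at $0$ and $1$.
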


We now draw conclusions from Theorem~\ref{theorem_Archimedean_distortion_limit}
and illustrate that the behavior of $\eta_n$ controls the speed of convergence
while the generator $\gen$ determines the functional form of the limit. We focus
on archetypal meta-Archimax
sequences %
with generators that induce different degrees of dependence in the upper tail of
the associated Archimedean copula.

\begin{example}\label{ex:4Independence}
  By \cite[Lemma~2.4]{Chatelain/Fougeres/Neslehova:2020},
  if the generator $\gen$ satisfies assumption~\ref{thm:A:dist:limit:2} of
  Theorem~\ref{theorem_Archimedean_distortion_limit}, the meta-Archimax sequence
  $(X_i)$ with generator $\gen$ and stdfs $(\STDF_n)$
  is meta-extreme if and only
  if %
  $\gen$ is the Gumbel--Hougaard generator $\gen(t) = \exp(-t^{1/\theta})$ with
  parameter $\theta \ge 1$; the constant $c$ in
  \cite[Lemma~2.4]{Chatelain/Fougeres/Neslehova:2020} can be set to $c=1$
  without loss of generality since the associated Archimax copula remains the
  same.  When $\theta=1$, $\gen$ is the independence generator
  $\gen(t) = e^{-t}$ and $(X_i)$ is a meta-extreme sequence with the same stdfs
  $(\STDF_n)$. When $\theta > 1$, expressing $(X_i)$
  as a meta-extreme sequence requires changing the stdfs to $(\STDF_n^{\star})$, where for all $n \ge 2$ and
  $x_1,\dots, x_n \in [0,\infty)$,
  $\STDF_n^{\star}(x_1,\dots, x_n) = \STDF_n^{1/\theta}(x_1^\theta,\dots,
  x_n^\theta)$. When $(X_i)$ is meta-Archimedean,
  $\ell_n(x_1,\dots, x_n) = x_1 + \dots + x_n$ so that $\ell_n^{\star}$ is the
  logistic stdf $\ell_{n,\text{Gu}}$ seen at the end
  of Example~\ref{ex:Mai}.

  Assuming that $\eta_n = \STDF_n(1,\dots, 1) \to \infty$ as $n\to \infty$, we
  can compare Theorem~\ref{theorem_Archimedean_distortion_limit} to the results
  in Section~\ref{sec:3}. To this end, suppose that the univariate margin $\pF$
  of $(X_i)$ satisfies $\pF \in \MDA(\EVDist_{\EVI,\mu,\sigma})$ and note that
  for $\theta \ge 1$, $\gen$ satisfies assumption~\ref{thm:A:dist:limit:2} of
  Theorem~\ref{theorem_Archimedean_distortion_limit} with $\rho=1/\theta$, viz.\
  Table~\ref{table_generator_limit}. Set
  $\rate^{\star}_n = \eta_n^{\star} = \eta_n^{1/\theta}$ with
  $\eta_n^{\star} = \STDF_n^{\star}(1,\dots, 1)$ so that
  $\lim_{n\to \infty} \rate_n^{\star} = \infty$. From
  $\lim_{t\to \infty}t(1-e^{-1/t}) = 1$, the rate $\rate_n$ from
  Theorem~\ref{theorem_Archimedean_distortion_limit} linked to $\eta_n^{\star}$
  satisfies
  $\lim_{n\to \infty}\rate_n/\rate_n^{\star}
  =\lim_{n\to \infty} [\eta_n^{\star}\{1-\gen(1/\eta_n^{\star})\}]^{-1}
  =\lim_{n\to \infty} \{\eta_n^{1/\theta} (1- e^{-1/\eta_n^{1/\theta}})\}^{-1} = 1$.
Remark~\ref{rem_rate_uniqueness} shows that the rate in
Theorem~\ref{theorem_Archimedean_distortion_limit} can thus be replaced by
$\rate_n^{\star} = \eta_n^{\star}$ used in Section~\ref{sec:3}, leading to the
same limit as in \eqref{eq:central}. The latter is
$\gen\{(-\log \EVDist_{\EVI,\mu,\sigma})^{\theta}\} = \exp[-\{(-\log
\EVDist_{\EVI,\mu,\sigma})^{\theta}\}^{1/\theta}]= \EVDist_{\EVI,\mu,\sigma}$,
which is indeed what we obtained in Section~\ref{sec:3.1}.
\end{example}

\begin{example}\label{Example_Archimax_no_tail_dep}
Let $\gen$ be an Archimedean generator with $-\gen^\prime(0) \in (0,\infty)$; see Table~\ref{table_generator_limit} for
examples. From \cite[Section~4.3]{Charpentier/Segers:2009},
the Archimedean copula generated by $\gen$ is upper tail independent and assumption~\ref{thm:A:dist:limit:2} of Theorem~\ref{theorem_Archimedean_distortion_limit} holds with $\rho=1$; see also \cite[Proposition~1]{LarssonNeslehova2011}. When $\gen$ is also completely monotone, the frailty $V$ in Example~\ref{ex_Archimedean_sequence} has a finite mean.

Let $(X_i)$ be a meta-Archimax sequence with generator $\gen$ such that
$-\gen^\prime(0) \in (0,\infty)$ and a sequence $(\STDF_n)$ of stdfs with
$\eta_n = \STDF_n(1,\dots, 1) \to \infty$ as $n\to \infty$. Suppose that the
univariate margin $\pF$ of $(X_i)$ satisfies
$\pF \in \MDA(\EVDist_{\EVI,\mu,\sigma})$.  If we set $\tilde \rate_n = \eta_n$
for all $n \in \IN$, we obtain via the formulation of $\rate_n$ provided in
Theorem~\ref{theorem_Archimedean_distortion_limit} that
$\lim_{n\to \infty} \rate_n/\tilde \rate_n = \lim_{n\to
  \infty}[\eta_n\{1-\gen(1/\eta_n)\}]^{-1} = \lim_{n\to
  \infty}[\eta_n\{\gen(0)-\gen(1/\eta_n)\}]^{-1} = -1/\gen^\prime(0)$.
Remark~\ref{rem_rate_uniqueness} thus implies that the rate $\tilde \rate_n$
could have been used instead of $\rate_n$ and that \eqref{eq:central} would then
hold with $(\bn_{\ceil{\STDFone_n}})$ and $(\an_{\ceil{\STDFone_n}})$ and a
limiting distribution of the form
$G(x) = \smash{\gen\{-\log \EVDist^{-1/\gen'(0)}_{\EVI,\mu,\sigma}(x)\}} =\gen(
- \log \EVDist_{\EVI,\tilde \mu, \tilde \sigma})$, $x \in \IR$, with altered
location $\tilde \mu$ and scale $\tilde \sigma$ for which the precises formulas
are provided in Remark~\ref{rem_rate_uniqueness}; as usual, $(\bn_n)$ and
$(\an_n)$ are the normalizing sequences corresponding to the iid sequence with
margin $\pMargin$.  Interestingly, when
$\STDF_n(x_1,\dots, x_n) = x_1 + \dots + x_n$ for all $n \ge 2$ so that $(X_i)$
is meta-Archimedean, $\eta_n = n$ and $(\bn_n)$ and $(\an_n)$ can be used to
stabilize $M_n$.
\end{example}

\begin{example}\label{Example_Archimax_tail_dep}
  Let $\psi$ be an Archimedean generator that satisfies
  assumption~\ref{thm:A:dist:limit:2} of
  Theorem~\ref{theorem_Archimedean_distortion_limit} and is such that
  $-\gen^\prime(0) = \infty$. In \cite{Charpentier/Segers:2009}, this case is
  referred to as \emph{asymptotic dependence in the upper tail} when $\rho < 1$
  and \emph{near asymptotic dependence} when $\rho=1$. Examples are again given
  in Table~\ref{table_generator_limit}.
  As in Example~\ref{Example_Archimax_no_tail_dep}, let $(X_i)$ be a meta-Archimax
  sequence with such generator $\gen$, a sequence $(\STDF_n)$ of stdfs with $\eta_n = \STDF_n(1,\dots, 1) \to \infty$ as
  $n\to \infty$, and univariate margin
  $\pMargin \in \MDA(\EVDist_{\EVI,\mu,\sigma})$. Let also $(\bn_n)$ and
  $(\an_n)$ be the normalizing sequences corresponding to the iid sequence with
  margin $\pMargin$.

  If we set $\tilde r_n = \eta_n$ for all $n \in \IN$ as in
  Example~\ref{Example_Archimax_no_tail_dep}, we can easily see that
  $\rate_n/\tilde \rate_n \to 0$, where $\rate_n$ is as in
  Theorem~\ref{theorem_Archimedean_distortion_limit}. Because any Archimedean
  generator is such that $\gen(t) < 1$ for all $t > 0$,
  Remark~\ref{rem_rate_uniqueness} and
  Theorem~\ref{theorem_Archimedean_distortion_limit} imply that the choice
  $\cn_n=\an_{\ceil{\STDFone_n}}$ and $\dn_n=\bn_{\ceil{\STDFone_n}}$ leads to a
  degenerate limit in \eqref{eq:central}. In the special case of a
  meta-Archimedean sequence, this means that the simplified choice of
  normalizing constants $(\an_n)$ and $(\bn_n)$ from the iid case cannot be used
  to stabilize $M_n$.  Theorem~\ref{theorem_Archimedean_distortion_limit}
  however still applies.
\end{example}

\begin{example}\label{ex:Ballerini}
  \cite{Ballerini1994b} explored maxima of meta-Archimedean sequences whose
  generator satisfies the so-called polynomial growth condition, viz.\
\begin{align}\label{eq:polygrowth}
\lim_{t \to \infty} t^{\rho}\{1-\gen(1/t)\} = c
\end{align}
for some $\rho \in (0,1]$ and $c \in (0,\infty)$. It is easily seen that $\gen$
then satisfies assumption~\ref{thm:A:dist:limit:2} of
Theorem~\ref{theorem_Archimedean_distortion_limit} with the same $\rho$,
because $\{1-\gen(1/t)\} = t^{-\rho} L(t)$ for $L(t) \to \theta$ as
$t \to \infty$. The latter property immediately renders $L$ slowly varying, but
also shows that the polynomial growth condition is more restrictive than
assumption~\ref{thm:A:dist:limit:2}, because slowly varying functions need not
tend to a positive constant at $\infty$. This is also apparent from
Example~\ref{Example_Archimax_no_tail_dep} and
Example~\ref{Example_Archimax_tail_dep}; in the special case $\rho=1$, the
polynomial growth condition implies that $-\gen^\prime(0) = 1/c \in
(0,\infty)$. One counterexample of an Archimedean generator for which
assumption~\ref{thm:A:dist:limit:2} holds with $\rho=1$ and yet
$-\gen^\prime(0) = \infty$ is provided by Family~23 of
\cite[Table~1]{Charpentier/Segers:2009}, another is given in
Table~\ref{table_generator_limit}.
We provide justification for the latter
  in the Supplementary Material \cite{HerrmannHofertNeslehova2024} and also
  explain therein that the family in \cite[Example~3]{Ballerini1994b} is
  actually not a counterexample.

Let $(X_i)$ be a meta-Archimax sequence with a generator that satisfies
\eqref{eq:polygrowth}, a sequence $(\STDF_n)$ of stdfs such that $\eta_n = \STDF_n(1,\dots, 1) \to \infty$ as $n\to \infty$,
and univariate margin $\pMargin \in \MDA(\EVDist_{\EVI,\mu,\sigma})$. Now set
$\tilde \rate_n = \eta_n^\rho$, $n \in \IN$. Because of
\eqref{eq:polygrowth}, $\rate_n$ in
Theorem~\ref{theorem_Archimedean_distortion_limit} satisfies
$\rate_n /\tilde\rate_n \to 1/c \in
(0,\infty)$. Remark~\ref{rem_rate_uniqueness} thus implies that $M_n$ could have
been alternatively normalized using $\cn_n=\an_{\ceil{\STDFone_n^\rho}}$ and
$\dn_n=\bn_{\ceil{\STDFone_n^\rho}}$, leading to the limit
$\pG(x) = \smash{\pD \{\EVDist_{\EVI,\mu,\sigma}^{1/c}(x)\}} = \gen[ c^{-\rho} \{ - \log \EVDist_{\EVI,\mu,\sigma}(x)\}^\rho]$, $x \in \IR$,
in \eqref{eq:central}. In the special case of a meta-Archimedean sequence, these
are precisely the normalizing constants and the limit derived in
\cite[Theorem~2]{Ballerini1994b}.
\end{example}
For meta-Archimax sequences it is possible to verify the conditions of
Theorem~\ref{theorem-FTG2} provided that the stdf
associated with the extreme value part is sufficiently regular. As shown next,
this guarantees the uniqueness of the limiting distributions in the sense of
Theorem~\ref{theorem-FTG2}.

\begin{proposition}\label{prop_unique_archimax_limit}
Let $(X_i)$ be a meta-Archimax sequence with stdfs $(\STDF_n)$ and generator $\gen$ which is strict, i.e.\ $\psi^{-1}(0) = \infty$.
Set $\STDFone_n = \STDF_n(1,\ldots,1)$ and define the rate $\rate : \IN \to (0,\infty)$ by
$\rate(n) =\rate_n = 1/(1-\gen(1/\STDFone_n))$, $n \ge 1$. Suppose that assumptions~\ref{thm:A:dist:limit:1} and \ref{thm:A:dist:limit:2} of Theorem~\ref{theorem_Archimedean_distortion_limit} hold and that
$\lim_{n\to\infty} \STDFone_{\ceil{nt}}/\STDFone_n = \kappa(t)$ for any $t>0$, where
$\kappa \colon (0,\infty) \to (0,\infty)$ is a bijection. Then the rate function
$\rate$ satisfies
$\lim_{n\to\infty} \rate_{\ceil{nt}}/\rate_n = (\kappa(t))^{\rho}$, $t>0$.
Furthermore, whenever $(M_n - \dn_n)/\cn_n$ converges
to a non-degenerate limit $\pG$, it holds that
$\pG = \pD\circ\EVDist$, where $\pD(u) = \gen\{(-\log u)^{1/\rho}\}$
and $\EVDist\in\GEV$.
\end{proposition}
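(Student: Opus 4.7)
My plan is to reduce the statement to a direct application of Theorem~\ref{theorem-FTG2} after establishing the rate identity $\rate_{\ceil{nt}}/\rate_n \to \kappa(t)^{\rho}$ and verifying the remaining hypotheses of that theorem.

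First I would prove the rate identity. Writing
\[
\frac{\rate_{\ceil{nt}}}{\rate_n} = \frac{1-\gen(1/\STDFone_n)}{1-\gen(1/\STDFone_{\ceil{nt}})},
\]
the task reduces to evaluating the asymptotic ratio of the function $L(x):=1-\gen(1/x)$ at $\STDFone_{\ceil{nt}}$ and $\STDFone_n$. Assumption~\ref{thm:A:dist:limit:1} gives $\STDFone_n\to\infty$, and the assumed convergence $\STDFone_{\ceil{nt}}/\STDFone_n\to\kappa(t)\in(0,\infty)$ allows me to invoke the uniform convergence theorem for regularly varying functions (see, e.g., \cite{BinghamGoldieTeugels1987}): since $L\in\RV_{-\rho}$ by assumption~\ref{thm:A:dist:limit:2}, $L(y_n)/L(x_n)\to c^{-\rho}$ whenever $x_n\to\infty$ and $y_n/x_n\to c>0$. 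Applied to $x_n=\STDFone_n$, $y_n=\STDFone_{\ceil{nt}}$, this yields $\rate_{\ceil{nt}}/\rate_n\to\kappa(t)^{\rho}$, as claimed. Since $\rho>0$ and $\kappa:(0,\infty)\to(0,\infty)$ is a bijection, $\lambda(t):=\kappa(t)^{\rho}$ is also a bijection of $(0,\infty)$.

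Next I would verify the remaining hypotheses of Theorem~\ref{theorem-FTG2}. For each $n$, the diagonal is $\Cdiag_n(u)=\gen\{\STDFone_n\invgen(u)\}$; strictness of $\gen$ implies $\invgen$ is a strictly decreasing continuous bijection on $(0,1]$, so $\Cdiag_n$ is strictly increasing. The condition $\rate_n\to\infty$ follows because $L(\STDFone_n)\to 0$: by regular variation with negative index $-\rho$, $L(\STDFone_n)=\STDFone_n^{-\rho}\widetilde L(\STDFone_n)$ for some slowly varying $\widetilde L$, and $\STDFone_n\to\infty$ forces $L(\STDFone_n)\to 0$. Theorem~\ref{theorem_Archimedean_distortion_limit} then supplies the pointwise convergence $\pD_n^{\rate}(u)\to\pD(u)=\gen\{(-\log u)^{1/\rho}\}$, and this $\pD$ is a continuous strictly increasing bijection of $[0,1]$: strictness of $\gen$ gives $\gen(0)=1$, $\gen(\infty)=0$, so $\pD(1)=\gen(0)=1$, $\pD(0)=\gen(\infty)=0$, and $\pD$ is the composition of strictly monotone continuous bijections on the relevant domains.

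With all hypotheses of Theorem~\ref{theorem-FTG2} in place, its conclusion gives $\pG=\pD\circ\EVDist$ for some $\EVDist\in\GEV$ as soon as $(M_n-\dn_n)/\cn_n$ has a nondegenerate weak limit $\pG$. The only substantive step is the first one; the main obstacle, and the only part requiring care, is that one cannot directly substitute $\STDFone_{\ceil{nt}}/\STDFone_n\to\kappa(t)$ into the regular variation statement $L(tx)/L(x)\to t^{-\rho}$, because the argument $\STDFone_{\ceil{nt}}/\STDFone_n$ itself varies with $n$. This is precisely where the uniform convergence theorem for regularly varying functions on compact subsets of $(0,\infty)$ is required; everything else is bookkeeping.
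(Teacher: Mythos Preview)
Your proposal is correct and follows essentially the same route as the paper: compute the ratio $\rate_{\ceil{nt}}/\rate_n$ as a quotient of values of $1-\gen(1/\cdot)\in\RV_{-\rho}$, invoke the uniform convergence theorem for regularly varying functions to handle the $n$-dependent argument $\STDFone_{\ceil{nt}}/\STDFone_n\to\kappa(t)$, and then verify the hypotheses of Theorem~\ref{theorem-FTG2} using strictness of $\gen$ and Theorem~\ref{theorem_Archimedean_distortion_limit}. The only cosmetic difference is that the paper deduces $\rate_n\to\infty$ directly from continuity of $\gen$ at $0$ (so $1-\gen(1/\STDFone_n)\to 0$), whereas you argue via the regular variation representation; both are fine.
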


An example of stdfs that satisfy
$\lim_{n\to\infty} \STDFone_{\ceil{nt}}/\STDFone_n = \kappa(t)$ for all $t > 0$
and some bijection $\kappa : (0,\infty) \to (0,\infty)$ is the logistic family
$(\ell_{n,\text{Gu}})$ from
Example~\ref{ex:Mai}. %

\begin{remark}\label{rem:4uniquenessAC}
  The assumptions of Proposition~\ref{prop_unique_archimax_limit} always hold
  when the sequence is meta-Archimedean, provided that $\gen$ satisfies
  assumption~\ref{thm:A:dist:limit:2} of
  Theorem~\ref{theorem_Archimedean_distortion_limit}. Indeed, $\gen$ is
  completely monotone as explained in Example~\ref{ex_Archimedean_sequence} and
  hence strict. Moreover, $\eta_n = n$, so that $\ceil{nt}/n \to t$ for any
  $t > 0$.  Hence, the limit is always of the form
  $\gen\{(-\log \EVDist_{\EVI,\mu,\sigma})^{1/\rho}\} = \gen(-\log
  \EVDist_{\rho\EVI,\mu,\rho\sigma})$ in view of \eqref{eq:4alternative}. A
  similar result has been derived in \cite{Wuthrich2004} albeit under stronger
  assumptions: \cite[Theorem~3.2]{Wuthrich2004} shows that if the limit in
  \eqref{eq:central} is of the form $\gen(-\log \EVDist)$ with a non-degenerate
  $\EVDist$, $\EVDist$ must be generalized extreme value.
\end{remark}

We close this section with a discussion about when the limiting distribution of
normalized maxima of a meta-Archimax sequence is in fact generalized extreme
value. From Example~\ref{ex:4Independence}, we already know that this is the
case when $\gen$ is the Gumbel--Hougaard generator. The
following result shows that this is the only possibility. To avoid assuming
\ref{thm:A:dist:limit:2} of
Theorem~\ref{theorem_Archimedean_distortion_limit}, we use the alternative
expression \eqref{eq:4alternative} of the limiting distribution in
\eqref{eq:central}.

\begin{lemma}\label{Lemma_Archimedean_GEV_Limit}
  Suppose that $\gen$ is an Archimedean generator. Then
  $\pG =\gen(-\log \EVDist)$ is generalized extreme value for all
  $\EVDist\in \GEV$ if and only if there exists $c > 0$ and $\theta \ge 1$ such
  that $\gen(t) = e^{-(ct)^{1/\theta}}$ for all $t \ge 0$.
\end{lemma}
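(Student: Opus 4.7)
For sufficiency ($\Leftarrow$), assume $\gen(t) = e^{-(ct)^{1/\theta}}$. For any $\EVDist_{\EVI,\mu,\sigma}\in\GEV$, the identity $(-\log \EVDist_{\EVI,\mu,\sigma})^{1/\theta} = -\log \EVDist_{\theta\EVI,\mu,\theta\sigma}$ when $\EVI\neq 0$ (and $(-\log \EVDist_{0,\mu,\sigma})^{1/\theta} = -\log \EVDist_{0,\mu,\theta\sigma}$ when $\EVI = 0$), already essentially recorded in Remark~\ref{rem_rate_uniqueness}, yields $\gen(-\log \EVDist_{\EVI,\mu,\sigma}) = \EVDist_{\theta\EVI,\mu,\theta\sigma}^{c^{1/\theta}}$. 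This is GEV since raising a GEV to a positive power produces another GEV of the same type.

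For the converse ($\Rightarrow$), I would first apply the hypothesis to the standard Gumbel $\EVDist_0(x) = \exp(-e^{-x})$, yielding that $x\mapsto\gen(e^{-x})$ is a GEV distribution function. Since $\gen$ is continuous, strictly decreasing on its support with $\gen(0)=1$ and $\gen(\infty)=0$, this function is strictly increasing from $0$ to $1$. If $\gen(t) > 0$ for all $t \ge 0$, then $\gen(e^{-x})$ is strictly positive on all of $\mathbb{R}$, forcing the matching GEV to have full support and hence to be of Gumbel type: $\gen(e^{-x}) = \exp(-e^{-(x-\mu')/\sigma'})$ for some $\mu'\in\mathbb{R}$ and $\sigma'>0$. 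Substituting $u=e^{-x}$ gives $\gen(u) = e^{-au^b}$ with $a = e^{\mu'/\sigma'}$ and $b = 1/\sigma'$, which is exactly $e^{-(ct)^{1/\theta}}$ with $\theta=1/b$ and $c = a^\theta$.

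The remaining case, where $\gen(t_0) = 0$ for some finite $t_0 > 0$, must be ruled out. Here $\gen(e^{-x})$ vanishes on $(-\infty,-\log t_0]$, so the matching GEV is Fr\'echet-type with left endpoint $-\log t_0$; solving the resulting equation yields $\gen(u) = \exp\{-k(\log(t_0/u))^{-\beta}\}$ on $(0,t_0)$ for some $k,\beta>0$. To derive a contradiction I would apply the hypothesis a second time with the standard Fr\'echet $\EVDist(x) = e^{-1/x}$, producing $\gen(1/x) = \exp\{-k(\log(t_0 x))^{-\beta}\}$ on $x > 1/t_0$. A growth-rate comparison as $x\to\infty$ then rules out every GEV subtype: the $-\log$ of this expression decays like $(\log x)^{-\beta}$, whereas $-\log$ of a Fr\'echet GEV decays polynomially in $x$, of a Gumbel GEV exponentially, and a Weibull GEV is excluded because its support is bounded above. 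The main obstacle is precisely this last elimination step, which relies on the rigidity of the GEV class under composition with the standard Fr\'echet transform. The restriction $\theta\geq 1$ (equivalently $b \le 1$ in $\gen(u) = e^{-au^b}$) aligns with the Gumbel--Hougaard convention of Table~\ref{table_generator_limit} and is automatic when $\gen$ is taken to be completely monotone, as is required for generators of bona-fide Archimedean copulas in the meta-Archimax setting.
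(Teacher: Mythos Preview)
Your plan is correct, but the route differs from the paper's. The paper's proof is essentially a two-line application of an external result, \cite[Theorem~3.3]{HerrmannHofertNeslehova2023b}, which characterizes all distortions $\pD$ on $[0,1]$ with $\pD \circ \EVDist \in \GEV$ for every $\EVDist \in \GEV$ as precisely those of the form $\pD(u) = e^{-\lambda(-\log u)^\gamma}$, $\lambda,\gamma > 0$. Setting $\pD(u) = \gen(-\log u)$ and $t = -\log u$ immediately gives $\gen(t) = e^{-\lambda t^\gamma}$; this form is automatically strict, so non-strict generators are excluded without further work, and $\theta = 1/\gamma \ge 1$ then follows from convexity of $\gen$ via \cite[Remark~2]{LarssonNeslehova2011}. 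Your argument instead tests the hypothesis on specific $\EVDist$'s (Gumbel, then Fr\'echet) and solves the resulting functional equations by hand. This is self-contained and avoids the external black box, at the cost of having to eliminate non-strict generators through the decay-rate contradiction you sketch; that elimination is correct and can be shortened by noting that the finite left endpoint $1/t_0$ already forces the Fr\'echet subtype, so only the polynomial-versus-logarithmic comparison is needed. Two small points: convexity of $\gen$ (the minimal requirement to generate even a bivariate Archimedean copula) already yields $\theta \ge 1$, so invoking complete monotonicity is more than necessary; and the identity $(-\log \EVDist_{\EVI,\mu,\sigma})^{1/\theta} = -\log \EVDist_{\theta\EVI,\mu,\theta\sigma}$ is recorded in the discussion preceding \eqref{eq:4alternative}, not in Remark~\ref{rem_rate_uniqueness}.
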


An interesting consequence of Lemma~\ref{Lemma_Archimedean_GEV_Limit} is the
following observation. Suppose that $(X_i)$ is a stationary meta-Archimax
sequence so that \eqref{eq:central} holds with $\pG = \gen\{-\log \EVDist\}$ for
some $\EVDist \in \GEV$. At the same time, suppose that $(X_i)$ fulfills the
distributional mixing assumption $\mathcal{D}(u_n)$ from
Definition~\ref{def:3.3} for any threshold $u_n = \cn_n x + \dn_n$ with
$x \in \IR$ and $(\cn_n)$ and $(\dn_n)$ from \eqref{eq:central}. From
\cite[Theorem~3.3.3]{LeadbetterLindgrenRootzen1983}, we then necessarily have
that $\pG \in \GEV$, and from Lemma~\ref{Lemma_Archimedean_GEV_Limit} we know
that this only happens when $\gen$ is the Gumbel--Hougaard generator with
parameter $\theta \ge 1$. As argued in Example~\ref{ex:4Independence}, $(X_i)$
is then meta-extreme.  In the special case when $(X_i)$ is meta-Archimedean and
$\gen$ is regularly varying at $0$, $(X_i)$ is necessarily stationary and any
non-degenerate limit in \eqref{eq:central} is of the form
$\pG = \gen\{-\log \EVDist\}$ for some $\EVDist \in \GEV$, as seen in
Remark~\ref{rem:4uniquenessAC}. Moreover, Example~\ref{ex:4Independence} shows
that $(X_i)$ with the Gumbel--Hougaard generator is a meta-extreme sequence with
the logistic stdf $\ell_{n,\text{Gu}}$. As discussed
at the end of Section~\ref{sec:extremes:for:time:series},
Proposition~\ref{prop:Dun-violated} implies that $\theta =1$, so that $(X_i)$ is
in fact iid. As a consequence, there are many examples of stationary series
$(X_i)$ that violate the $\mathcal{D}(u_n)$ condition, but still can be analyzed
in our framework.  We summarize this as follows.
\begin{corollary}\label{theorem_contradiction_Du_n}
  Suppose that $(X_i)$ is a meta-Archimedean sequence with generator $\gen$ that
  fulfills assumption~\ref{thm:A:dist:limit:2} of
  Theorem~\ref{theorem_Archimedean_distortion_limit}, and assume that
  \eqref{eq:central} holds for some non-degenerate $G$. Then $\mathcal{D}(u_n)$
  from Definition~\ref{def:3.3} holds for all thresholds $u_n = \cn_n x + \dn_n$
  with $x \in \IR$ and $(\cn_n)$ and $(\dn_n)$ from \eqref{eq:central} if and
  only if $(X_i)$ is an iid sequence.
\end{corollary}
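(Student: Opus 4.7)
The ``if'' direction is immediate: if $(X_i)$ is iid, all joint probabilities entering the $\mathcal{D}(u_n)$ bound factor exactly, so $\alpha(n,s) = 0$. For the ``only if'' direction, assume $\mathcal{D}(u_n)$ holds with $u_n = \cn_n x + \dn_n$ for all $x \in \IR$.

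The plan is to chain four earlier results. First, a meta-Archimedean sequence is exchangeable (and in particular strictly stationary) by the construction in Example~\ref{ex_Archimedean_sequence}, so \cite[Theorem~3.3.3]{LeadbetterLindgrenRootzen1983} applies: stationarity combined with $\mathcal{D}(u_n)$ forces the non-degenerate limit $\pG$ to lie in $\GEV$. Second, Remark~\ref{rem:4uniquenessAC} (using assumption~\ref{thm:A:dist:limit:2} on $\gen$) identifies $\pG = \gen(-\log \EVDist)$ for some $\EVDist \in \GEV$. Third, Lemma~\ref{Lemma_Archimedean_GEV_Limit} pins the generator down to the Gumbel--Hougaard form $\gen(t) = e^{-(ct)^{1/\theta}}$ for some $c > 0$ and $\theta \ge 1$.

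The crucial final step is to force $\theta = 1$. For the Gumbel--Hougaard generator a direct computation yields the meta-Archimedean diagonal $\Cdiag_n(u) = \gen(n\invgen(u)) = u^{n^{1/\theta}}$, so $(X_i)$ has a power diagonal with $\eta_n = n^{1/\theta}$. If $\theta > 1$, setting $\rate_n = \eta_n$ gives $\rate_{\lceil nt\rceil}/\rate_n \to t^{1/\theta} =: \lambda(t)$, which is a bijection of $(0,\infty)$ onto itself but, being strictly concave, is not linear on $(0,1)$. The remaining hypotheses of Proposition~\ref{prop:Dun-violated} are all met (exchangeability, strict monotonicity of $\Cdiag_n$, non-degeneracy of $\pG$), so Proposition~\ref{prop:Dun-violated} asserts that $\mathcal{D}(u_n)$ is violated, a contradiction. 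Hence $\theta = 1$ and $\gen(t) = e^{-ct}$ is the independence generator, giving $\Copula_n(u_1,\dots,u_n) = \prod_{i=1}^n u_i$ for every $n$, so continuity of $\pMargin$ forces $(X_i)$ to be iid.

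The main obstacle is a small mismatch in invoking Lemma~\ref{Lemma_Archimedean_GEV_Limit}: it is stated for every $\EVDist \in \GEV$, whereas the sequence at hand supplies only the single $\EVDist$ coming from $\pMargin \in \MDA(\EVDist)$. This is resolved exactly as in the discussion preceding the corollary, or alternatively by noting that the copulas $(\Copula_n)$ of $(X_i)$ do not depend on $\pMargin$, so the marginal may be varied freely across GEV-attracted distributions to sweep out all of $\GEV$ and legitimize the single-instance application.
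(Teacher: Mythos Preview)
Your proof is correct and follows exactly the chain of reasoning in the paper's discussion immediately preceding the corollary, which serves as its proof: stationarity plus $\mathcal{D}(u_n)$ forces $\pG \in \GEV$ via \cite[Theorem~3.3.3]{LeadbetterLindgrenRootzen1983}, Remark~\ref{rem:4uniquenessAC} gives $\pG = \gen(-\log \EVDist)$, Lemma~\ref{Lemma_Archimedean_GEV_Limit} identifies $\gen$ as Gumbel--Hougaard, and Proposition~\ref{prop:Dun-violated} (with $\eta_n = n^{1/\theta}$ and $\lambda(t)=t^{1/\theta}$ not linear for $\theta>1$) then forces $\theta = 1$. The subtle point you flag about Lemma~\ref{Lemma_Archimedean_GEV_Limit} being phrased for all $\EVDist \in \GEV$ is treated the same way in the paper's discussion.
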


\subsection{Extensions}\label{section_extensions} %
We now discuss two generalizations of the results for meta-Archimax sequences
from Section~\ref{section_Archimedean_Archimax}: The first are arbitrary
exchangeable sequences and the second are mixtures of not necessarily
exchangeable sequences.

All meta-Archimedean sequences are exchangeable, as are meta-Archimax sequences
whose stdfs have the symmetry property that
$\STDF_n(x_1,\dots, x_n) = \STDF_n(x_{\pi(1)},\dots, x_{\pi(n)})$ for each
$n \in \IN$ and each permutation $\pi$ on $\{1,\dots, n\}$.  Maxima of
exchangeable sequences have first been investigated in \cite{Berman1962b}. From
de Finetti's theorem, there exists a real-valued random variable $W$ with the
property that $X_1,X_2,\ldots$ are conditionally independent and identically
distributed given $W$. If $\pG_W$ denotes the conditional distribution function
of $X_i$ given $W$, we obtain that for each $n \in \IN$,
$H_n(x_1,\dots, x_n) = \Prob(X_1 \le x_1, \dots, X_n \le x_n) = \E[\prod_{i=1}^n
\pG_W(x_i)]$.  The results in \cite{Berman1962b} can be broadly summarized as
follows. Suppose $\widetilde \pF$ is such that
$\widetilde \pF \in \MDA(\EVDist)$ with normalizing constants $(\cn_n)$,
$\cn_n>0$ and $(\dn_n)$, where $\EVDist$ is either the Weibull, Fr\'echet or
Gumbel distribution. Then \eqref{eq:central} holds with these normalizing
constants if and only if $\log \pG_W(x) / \log \widetilde \pF(x)$ converges in
distribution to some non-degenerate distribution function $A$ concentrated on
$[0,\infty)$ as $x \uparrow x_{\widetilde \pF}$, where
$x_{\widetilde \pF} = \sup\{ x : \widetilde \pF(x) < 1\}$.  The limit in
\eqref{eq:central} is then of the form $\mathcal{LS}_A(-\log \EVDist)$, where
$\mathcal{LS}_A$ is the Laplace--Stieltjes transform of $A$.

\begin{example}
  When $(X_i)$ is meta-Archimedean, $W$ is the frailty $V$ from
  Example~\ref{ex_Archimedean_sequence} and
  $\pG_W(x) = \e^{-W \gen^{-1}\{ F(x)\}}$. The approach of \cite{Berman1962b}
  can thus be adopted with $\widetilde \pF = \e^{-\gen^{-1} \circ \pF}$; $A$ is
  then the distribution function of $V$ and $\mathcal{LS}_A = \gen$. Proceeding
  this way recovers the results in \cite{Wuthrich2004} which we already related
  to ours in Remark~\ref{rem:4wuethrich1}. As we explained in
  Remark~\ref{rem:4uniquenessAC}, our theory allows us to show that
  $\gen(-\log \EVDist)$, $\EVDist \in \GEV$, are the only possible limits in
  \eqref{eq:central}.
\end{example}

The difficulty with the approach in \cite{Berman1962b} is that $\widetilde \pF$
is left unspecified, and it may not be clear whether it exists and how to choose
it. To instead use the results derived here, we can rely on
\cite{DuranteMai2010} according to which the copula $\Copula_n$ of the first $n$ elements
of an exchangeable sequence $(X_i)$ with continuous margin $\pMargin$ is given,
for all $u_1,\dots, u_n \in (0,1)$, by $\Copula_n(u_1,\dots, u_n) = \int_0^1 \prod_{i=1}^n\frac{\partial B(u_i,t)}{\partial t} \d t$,
where $B$ is the copula of $(X_1,W)$. This way,
$\pD^{\rate}_n(u) = \int_0^1 \{\frac{\partial}{\partial t}
B(u^{1/\rate_n},t)\}^n \d t$. Given that
$|\frac{\partial}{\partial t} B(v,t)| \le 1$ for all $v\in (0,1)$ and almost all
$t \in (0,1)$ \cite[Theorem~2.2.7]{Nelsen2006}, the dominated convergence
theorem allows us to conclude that $\pD^{\rate}_n \to \pD$ pointwise on $[0,1]$
with $\pD(u)=\int_0^1 b(u,t)\d t$, provided that
$\lim_{n\to \infty} \{ \frac{\partial}{\partial t}
B(u^{1/\rate_n},t)\}^n=b(u,t)$. Because $\pD^\rate_n(0)=0$ and
$\pD^\rate_n(1)=1$, we also have $\pD(0)=0$ and $\pD(1)=1$. The limiting
behavior of $M_n$ can then be deduced from
Theorem~\ref{theorem_continuous_limit}, if its conditions hold.

\begin{example}
  For a meta-Archimedean sequence with generator $\gen$ which is a Laplace
  transform of a frailty $V$ with distribution function $F_V$,
  $B(u,t) = \int_0^{\pF_V^{-1}(t)} \e^{-v \gen^{-1}(u)} \d \pF_V(v)$ and
  $\frac{\partial}{\partial t}B(u^{1/\rate_n},t) = \e^{-\pF^{-1}_V(t) \{
    \gen^{-1}(u^{1/\rate_n})\}}$.
When $1 - \gen(1/\cdot) \in \RV_{-\rho}$, we showed in the proof of
Theorem~\ref{theorem_Archimedean_distortion_limit} that
$n \gen^{-1}(u^{1/\rate_n}) \to (-\log u)^{1/\rho}$, so that the limiting
distortion is given by $\pD(u) = \E[\exp\{-(-\log u)^{1/\rho} F_V^{-1}(T)\}]$
where $T\sim\U(0,1)$. This expression readily simplifies to
$\pD(u) = \E[\e^{-V(-\log u)^{1/\rho}}] = \gen\{(-\log u)^{1/\rho}\}$, which is
precisely what we obtained in
Theorem~\ref{theorem_Archimedean_distortion_limit}.
\end{example}

\begin{example}
  From \cite{DuranteMai2010}, exchangeable sequences can also be constructed by
  choosing an arbitrary $B$, $F$, and a distribution of $W$.  If we set $B$ to
  be the Eyraud--Farlie--Gumbel--Morgenstern copula
  $B_{\theta}(u,t) = ut\{1+\theta(1-u)(1-t)\}$ \cite[Example~3.9]{Nelsen2006},
  then $\frac{\partial}{\partial t} B_{\theta}(u,t) = u + \theta u(u-1)(2t-1)$,
  leading to $b_\theta(u,t) = u^{\theta(2t-1)+1}$ upon setting $\rate_n =
  n$. For each $u \in (0,1)$, %
  $\pD_{\theta}(u) = \int_0^1 u^{\theta(2t-1)+1} \d t =
  \frac{u^{1+\theta}-u^{1-\theta}}{2\theta \log u}$.  If
  $\pMargin\in\MDA(\EVDist_{\mu,\sigma,\EVI})$,
  Theorem~\ref{theorem_continuous_limit}~\ref{mainThm:1} shows that
  \eqref{eq:central} holds with the non-degenerate limit
  $\pD_{\theta}\circ\EVDist_{\mu,\sigma,\EVI}$. The normalizing constants are
  those that stabilize the maximum of an iid sequence with margin
  $\pMargin$. The latter fact allows us to use the results in \cite{Berman1962b}
  and conclude that there must exist some non-degenerate distribution on
  $[0,\infty)$ with
  Laplace--Stieltjes transform $\pD_\theta \circ e^{-t}$. Indeed, we recognize
  it to be the uniform distribution on $[1-\abs{\theta}, 1+\abs{\theta}]$.
  From Corollary~\ref{corollary_FTG} we obtain that if \eqref{eq:central} holds
  with a non-degenerate limit, the latter must be of the form
  $\pD_{\theta}\circ\EVDist$, where $\EVDist$ is of the same type as
  $\EVDist_{\mu,\sigma,\EVI}$. Interestingly, $\pD_{\theta} = \pD_{-\theta}$,
  even though $B_{\theta}$ and $B_{-\theta}$ embody very different types of
  dependence.
\end{example}
The second extension is to mixtures of not necessarily exchangeable sequences.

\begin{example}
  Consider $X_1,X_2,\ldots \sim \pMargin$ with the property that for each
  $d \in \IN$ and distinct $t_1,\dots, t_d \in \IN$, the copula
  $\Copula_{t_1,\dots, t_d ; \theta}$ of $(X_{t_1},\dots, X_{t_d})$ depends on a
  parameter $\theta\in \Theta \subseteq\IR^k$. Suppose that the mapping
  $\theta\mapsto\Copula_{t_1,\dots, t_d ; \theta}(u_1,\dots,u_d)$ is
  Borel-measurable for every $u_1,\dots, u_d \in [0,1]$. These conditions are
  fulfilled for several sequences constructed in this paper, notably all
  meta-Archimedean sequences with parametric generators. Let $T$ be some
  distribution function on $\Theta$. From \cite[Corollary~1.4.7,
  p.~17]{DuranteSempi2016}, $\widetilde \Copula_{t_1,\dots, t_d}$ given by
  $\widetilde \Copula_{t_1,\dots, t_d} (u_1,\dots, u_d) = \int_{\Theta}
  \Copula_{t_1,\dots, t_d ; \theta}(u_1,\dots,u_d) \d T(\theta)$ is a copula.
  By the Daniell--Kolmogorov
  theorem, %
  there exists a sequence $(U_i)$ of standard uniform variables such that for
  each $n \in \IN$ and distinct $t_1,\dots, t_n \in \IN$,
  $(U_{t_1},\dots, U_{t_n}) \sim \widetilde \Copula_{t_1,\dots, t_n}$. A mixture
  sequence $(\widetilde X_i)$ with margin $\pMargin$ is then obtained by setting
  $\widetilde X_i = F^{-1}(U_i)$.

  To simplify the notation, write
  $\Copula_{n;\theta}=\Copula_{1,\dots, n ; \theta}$ and
  $\widetilde \Copula_n = \widetilde \Copula_{1,\dots, n}$, and let
  $\Cdiag_{n;\theta}$ and $\widetilde \Cdiag_n$ denote the diagonal of
  $\Copula_{n;\theta}$ and $\widetilde \Copula_n$. Suppose that there exists a
  rate function $\rate$, independent of $\theta$, such that for each
  $\theta \in \Theta$, the diagonal power distortion $\pD_{n;\theta}^{\rate}$ of
  $\Copula_{n;\theta}$ converges pointwise to a continuous map $\pD_\theta$. Then the
  dominated convergence theorem implies that for all $u \in [0,1]$,
  $\lim_{n\to\infty} \widetilde \pD_{n}^\rate (u) = \lim_{n\to \infty}\widetilde
  \Cdiag_n \{u^{1/\rate_n}\} = \lim_{n\to \infty} \int_{\Theta}
  \pD_{n;\theta}^{\rate}(u) \d T(\theta) = \int_{\Theta} \pD_{\theta}(u) \d
  T(\theta) = \widetilde D(u)$, i.e.\ the limiting distortion is a mixture with
  the same mixing distribution $T$. Moreover, if for some
  $\EVDist_{\EVI,\mu,\sigma} \in \GEV$ and sequences $(\an_n)$, $\an_n > 0$, and
  $(\bn_n)$, $\pF^n(\an_n x + \bn_n) \to \EVDist_{\EVI,\mu,\sigma}(x)$ for all
  $x \in \IR$, Theorem~\ref{theorem_continuous_limit}~\ref{mainThm:1} shows
  that \eqref{eq:central} holds with
  $\pG = \widetilde \pD \circ \EVDist_{\EVI,\mu,\sigma}$ and normalizing
  sequences given, for all $n \in \IN$, by $\cn_n = \an_{\lceil r_n \rceil}$ and
  $\dn_n = \bn_{\lceil r_n \rceil}$.  The same sequences can be used to
  stabilize the maximum of $(X_i)$ for each $\theta \in \Theta$.

  As a concrete example, let $(X_i)$ be the meta-Archimedean sequence with some
  margin $\pMargin \in \MDA(\EVDist)$ and the Ali--Mikhail--Haq generator
  $\gen_{\theta}$ with parameter $\theta \in (0,1)$, viz.\
  Table~\ref{table_generator_limit}. Because $-\psi^\prime_\theta(0) < \infty$,
  Example~\ref{Example_Archimax_no_tail_dep} shows that we can choose
  $\rate_n =n$; the limiting diagonal power distortion is then given, for all
  $u \in [0,1]$, by $\pD_\theta(u) = \gen_\theta(- \log u)$. Various mixtures
  can now be obtained by choosing a distribution $T$ on $(0,1)$. For example, if
  $T$ is uniform,
  $\widetilde \pD(u) = \int_0^1 \frac{1-\theta}{1/u - \theta} \d \theta = 1-\frac{u-1}{u} \log (1-u)$.
\end{example}

\section{Uniform convergence rates of maxima under dependence}\label{sec:unif:conv:rate}

We now discuss uniform convergence rates of the distribution of normalized
maxima to their weak limit, akin to Berry--Esseen type theorems for
sums. Numerous such results exist for maxima of iid sequences, a review is
provided in \cite[Section~2.8]{LeadbetterRootzen1988}, starting with the early
results for normal \cite{Hall1979,Nair1981} and exponential
\cite{HallWellner1979} random variables. Convergence rates for various classes
of distributions have been investigated since, see e.g.\
\cite{PengNadarajahLin2010}; also noteworthy are general results for the
Fr\'echet domain of attraction in \cite{Pereira1983}. Convergence rates for
maxima of dependent sequences have merely been obtained for specific
combinations of dependence patterns and marginal distributions, such as
stationary normal sequences \cite{Cohen1982,Rootzen1983}, or chain-dependent
sequences \cite{McCormickSeymour2001}; results for continuous-time stochastic
processes can be found in \cite{KratzRootzen1997}.

Here, we derive a general result for dependent sequences by exploiting the
interplay between their diagonal power distortions and the extremal properties
of the associated iid sequence. This allows us to transfer uniform convergence
rates from the iid case to dependent sequences by only slightly strengthening
the conditions in Theorem~\ref{theorem_continuous_limit}~\ref{mainThm:1}.

\begin{theorem}[Uniform convergence rate under dependence]\label{theorem_convergence_rate}
  Consider a sequence $X_1,X_2,\ldots\sim\pMargin$ with continuous margin $\pF$
  and assume the following conditions to hold.
\begin{enumerate}[label=(\roman*), labelwidth=\widthof{(iii)}]
  \item\label{itconvrate:1} There exist $\EVDist \in \GEV$ and sequences $(\an_n)$, $\an_n >0$, and $(\bn_n)$ and a map
  $\iidrate \colon \IN \to [0,\infty)$ satisfying $\iidrate(n) \to 0$ as
  $n\to\infty$ such that
 $
    \sup_{x\in\IR} \abs{\pMargin^n(\an_n x + \bn_n) - \EVDist(x)} \leq \iidrate(n).
$
\item\label{itconvrate:2} There exists a rate $\rate \colon \IN \to (0,\infty)$ with $ \rate_n \to \infty$ as $n\to\infty$ and functions  $\pD \colon [0,1] \to [0,1]$ and $s \colon \IN \to [0,\infty)$ with  $s(n) \to 0$ for $n\to\infty$ such that the diagonal power distortion  $\pD^{\rate}_n$ corresponding to $r$ satisfies
$
\sup_{u\in[0,1]} \abs{\pD_n^r(u) - \pD(u)} \leq s(n).
$
  \item\label{itconvrate:3}%
  The limit $\pD$ in~\ref{itconvrate:2} is H\"older continuous with constant
    $K$ and parameter $0 < \kappa \leq 1$.
  \end{enumerate}
  Then
  \begin{align*}
    \sup_{x\in\IR} \abs{\Prob(M_{n} \le \an_{\ceil{\rate_n}} x +\bn_{\ceil{\rate_n}}) - \pD\circ\EVDist(x)} \leq K\left\{\iidrate\left(\ceil{\rate_n}\right) + 3\e^{-1}\I_{\{r_n\notin\IN\}}/\rate_n\right\}^{\kappa} + s(n),
  \end{align*}
  where the right-hand side converges to $0$
  for $n\to\infty$.
\end{theorem}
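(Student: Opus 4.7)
The plan is to exploit the key identity $\Prob(M_n\le x)=\Cdiag_n(\pMargin(x))=\pD_n^{\rate}(\pMargin(x)^{\rate_n})$, which follows directly from Sklar's theorem and the definition of the diagonal power distortion. Writing $t_n(x)=\pMargin(\an_{\ceil{\rate_n}}x+\bn_{\ceil{\rate_n}})$ and adding and subtracting $\pD(t_n(x)^{\rate_n})$ gives
\begin{align*}
\bigl|\Prob(M_n\le \an_{\ceil{\rate_n}}x+\bn_{\ceil{\rate_n}}) - \pD(\EVDist(x))\bigr|
&\le \bigl|\pD_n^{\rate}(t_n(x)^{\rate_n}) - \pD(t_n(x)^{\rate_n})\bigr| \\
&\quad + \bigl|\pD(t_n(x)^{\rate_n}) - \pD(\EVDist(x))\bigr|.
\end{align*}
The first summand is uniformly bounded by $s(n)$ via hypothesis~\ref{itconvrate:2}, yielding the additive $s(n)$ term in the claim.

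For the second summand, the plan is to invoke H\"older continuity~\ref{itconvrate:3} to upper bound it by $K\bigl|t_n(x)^{\rate_n}-\EVDist(x)\bigr|^{\kappa}$, and then split the base via another triangle inequality:
\begin{align*}
\bigl|t_n(x)^{\rate_n}-\EVDist(x)\bigr| \le \bigl|t_n(x)^{\rate_n}-t_n(x)^{\ceil{\rate_n}}\bigr|+\bigl|t_n(x)^{\ceil{\rate_n}}-\EVDist(x)\bigr|.
\end{align*}
Hypothesis~\ref{itconvrate:1} applied at index $\ceil{\rate_n}$ bounds the second piece by $\iidrate(\ceil{\rate_n})$ uniformly in $x$, since $t_n(x)^{\ceil{\rate_n}}=\pMargin^{\ceil{\rate_n}}(\an_{\ceil{\rate_n}}x+\bn_{\ceil{\rate_n}})$.

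For the first piece I would exploit that $t_n(x)\in[0,1]$ and $\ceil{\rate_n}\ge \rate_n$: the quantity $t^{\rate_n}-t^{\ceil{\rate_n}}$ is nonnegative on $[0,1]$ and vanishes identically when $\rate_n\in\IN$. Otherwise the mean value theorem applied to $s\mapsto t^s$ yields $|t^{\rate_n}-t^{\ceil{\rate_n}}|\le (\ceil{\rate_n}-\rate_n)\sup_{t\in(0,1)}(-t^{\rate_n}\log t)$, and the standard identity $\sup_{t\in(0,1)}(-t^{\rate_n}\log t)=1/(\e\rate_n)$ delivers an $O(1/\rate_n)$ bound with an explicit constant, yielding the promised term $3\e^{-1}\I_{\{\rate_n\notin\IN\}}/\rate_n$. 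Combining the three bounds and taking the supremum over $x\in\IR$ produces the claimed inequality. The right-hand side tends to $0$ because $s(n)\to 0$ and $\rate_n\to\infty$ by~\ref{itconvrate:2}, and $\iidrate(\ceil{\rate_n})\to 0$ since $\ceil{\rate_n}\to\infty$ and $\iidrate(n)\to 0$ by~\ref{itconvrate:1}. The only nontrivial step is the explicit calculus bound for $\sup_t(t^{\rate_n}-t^{\ceil{\rate_n}})$; everything else is a chain of triangle inequalities once the diagonal-power identity aligns the dependent-case probability with a suitable power of $\pMargin$ that can be compared to $\EVDist$ through~\ref{itconvrate:1}.
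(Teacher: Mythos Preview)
Your proof is correct and follows essentially the same decomposition as the paper: add and subtract $\pD(\pMargin^{\rate_n}(\an_{\ceil{\rate_n}}x+\bn_{\ceil{\rate_n}}))$, bound the $\pD_n^{\rate}-\pD$ piece by $s(n)$, apply H\"older continuity, and then split $|\pMargin^{\rate_n}-\EVDist|$ into $|\pMargin^{\rate_n}-\pMargin^{\ceil{\rate_n}}|+|\pMargin^{\ceil{\rate_n}}-\EVDist|$. The only difference is cosmetic: the paper handles $\sup_{u\in[0,1]}|u^{\rate_n}-u^{\ceil{\rate_n}}|$ via a prepared corollary (explicit optimization of $u^a-u^b$), whereas your mean-value-theorem argument is more direct and in fact yields the sharper constant $\e^{-1}$ rather than $3\e^{-1}$.
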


Assumption~\ref{itconvrate:2} in Theorem~\ref{theorem_convergence_rate} is not
strong. Indeed, if $\pD_n^r$ converges pointwise to $\pD$,
Lemma~\ref{lemma_uniform_convergence} implies that this convergence is uniform on
$[0,1]$, so assumption~\ref{itconvrate:2} merely introduces the notation for
the convergence rate.  To apply Theorem~\ref{theorem_convergence_rate} to any
model where the rate in the associated iid case is known, one therefore only
needs to explicitly identify the rate function $s$ linked to the dependence
structure.
\begin{example}\label{example_moving_max_convergence_rate}
  For the moving maximum process in Example~\ref{example_moving_max}, set
  $\rate_n = n$ to obtain $\pD_n^r(u) = u^{(n+k)/(n(k+1))}$ and
  $\pD(u) = u^{1/(k+1)}$. Because $k \ge 0$, Lemma~\ref{lem:unif:rate:pow:fun} in
  the Supplementary Material \cite{HerrmannHofertNeslehova2024} implies that
  $s(n) =  \sup_{u\in[0,1]} \abs{\pD_n^r(u) - \pD(u)} = \frac{k}{n+k} (1+\frac{k}{n})^{-n/k}$.
  With $k/(n+k) \to 0$ and $(1+k/n)^{-n/k} \to 1/\e$, we indeed see that $s(n) \to 0$ as $n \to \infty$.
  Furthermore, $\pD(u) = u^{1/(k+1)}$ is H\"older continuous with constant $K = 1$ and $\kappa = 1/(k+1)$.

  Using Theorem~\ref{theorem_convergence_rate}, we can now combine the above
  convergence rate of $\pD_n^r$ with uniform convergence rates from the iid
  case. For example, if $\pMargin$ is the standard normal distribution function
  $\Phi$, \cite{Hall1979} provides sequences of constants $(\an_n)$ and
  $(\bn_n)$ such that
  $ \sup_{x\in\IR} \abs{\Phi^n(\an_n x + \bn_n) - \Lambda(x)} \leq 3/\log(n) $,
  where $\Lambda(x) = \EVDist_{0,0,1}(x)$ is the (standard) Gumbel distribution.
  Theorem~\ref{theorem_convergence_rate} applied to a sequence of standard
  normal random variables with dependence specified by the moving maximum
  process thus leads to the convergence rate
  $\sup_{x\in\IR} \abs{\Prob(M_{n} \leq \an_{n} x + \bn_{n}) -
    \Lambda(x)^{1/(k+1)}} \leq (\frac{3}{\log(n)})^{1/(k+1)} +
  \frac{k}{n+k} (1+\frac{k}{n})^{-n/k}$.  From the power $1/(k+1)$ of
  $\iidrate(n)=3/\log(n)$ we see that the upper bound tends to $0$ slower
  than in the iid case, with higher values of $k$ leading to slower convergence
  rates.
\end{example}
Applying Theorem~\ref{theorem_convergence_rate} is particularly easy in case of
sequences with asymptotic power diagonals in the sense of
Definition~\ref{def:3.1}. Indeed, assumption~\ref{itconvrate:3} always holds
given that the limiting distortion $u^\theta$ is H\"older continuous with
$\kappa=\theta$. In the special case of sequences with power diagonals,
$r_n = \eta_n$ and $\pD_n(u) =u$ as discussed in Section~\ref{sec:3}, so that
assumption~\ref{itconvrate:2} is trivially true with $s(n) =0$. This leads to
the following corollary.
\begin{corollary}\label{corollary_PQD_convergence_speed}
  Suppose that $X_1,X_2,\ldots \sim\pF\in\MDA(\EVDist)$ is a sequence with a
  power diagonal and that the continuous margin $\pF$ satisfies
  assumption~\ref{itconvrate:1} of
  Theorem~\ref{theorem_convergence_rate}. Then
  $\sup_{x\in\IR} \abs{\Prob(M_{n} \leq \an_{\ceil{\eta_n}} x
    +\bn_{\ceil{\eta_n}} ) - \EVDist(x)} \leq \iidrate\left(\ceil{\eta_n}\right)
  + 3\e^{-1}\I_{\{\eta_n\notin\IN\}}/\eta_n$.  Additionally, if $\iidrate$ is
  monotonically decreasing and $\Copula_n$ is positively quadrant dependent
  (PQD) for each $n \in \IN$, i.e.\ $\Copula_n(\bm{u}) \ge \Pi_n(\bm{u})$ for
  all $\bm{u}\in[0,1]^d$, then
  $\iidrate\left(\ceil{\eta_n}\right) \geq \iidrate\left(n\right)$.
\end{corollary}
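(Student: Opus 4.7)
The plan is to derive the first inequality as an immediate specialization of Theorem~\ref{theorem_convergence_rate} to the power-diagonal setting, and then to obtain the second inequality from the PQD comparison with the independence copula.

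First I would invoke Theorem~\ref{theorem_convergence_rate} with the rate $\rate_n = \eta_n$. Since the sequence has a power diagonal, $\Cdiag_n(u) = u^{\eta_n}$ for each $n\in\IN$, so $\pD_n^{\rate}(u) = \Cdiag_n(u^{1/\eta_n}) = u$ identically on $[0,1]$. Hence assumption~\ref{itconvrate:2} of Theorem~\ref{theorem_convergence_rate} holds trivially with limit $\pD(u) = u$ and error $s(n) \equiv 0$, while assumption~\ref{itconvrate:1} is granted by hypothesis. The identity on $[0,1]$ is $1$-Lipschitz, hence H\"older continuous with $K=1$ and $\kappa=1$, so assumption~\ref{itconvrate:3} is also met. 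Substituting these choices into the bound supplied by Theorem~\ref{theorem_convergence_rate} will yield the first claimed inequality, with the $s(n)$-term dropping out and the H\"older factor reducing to unity.

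For the second part, I plan to reduce the claim to showing $\ceil{\eta_n} \leq n$, because the monotonicity of $\iidrate$ then delivers $\iidrate(\ceil{\eta_n}) \geq \iidrate(n)$ directly. Under PQD, $\Copula_n(u,\ldots,u) \geq u^n$ for every $u \in [0,1]$, and combined with the power-diagonal identity $\Cdiag_n(u) = u^{\eta_n}$ this gives $u^{\eta_n} \geq u^n$ for all $u \in (0,1)$. Taking logarithms (noting $\log u < 0$) would then force $\eta_n \leq n$, and since $n$ is a positive integer, $\ceil{\eta_n} \leq n$ follows at once.

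There is really no hard step here; the corollary is essentially packaged by Theorem~\ref{theorem_convergence_rate}. The only point worth flagging is that the choice $\rate_n = \eta_n$ collapses the diagonal power distortion to the identity, so the distortion-side error $s(n)$ vanishes and the rate of convergence is inherited entirely from the iid side, up to the indicator contribution from $\ceil{\cdot}$ when $\eta_n$ is not an integer.
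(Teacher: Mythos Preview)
Your proposal is correct and matches the paper's own proof essentially line for line: the first bound is the specialization of Theorem~\ref{theorem_convergence_rate} with $\rate_n=\eta_n$, $\pD_n^{\rate}(u)=\pD(u)=u$, $s(n)=0$, $K=\kappa=1$, and the second part follows from $u^{\eta_n}=\Cdiag_n(u)\ge \Pi_n(u,\dots,u)=u^n$ giving $\eta_n\le n$, hence $\ceil{\eta_n}\le n$ and $\iidrate(\ceil{\eta_n})\ge\iidrate(n)$ by monotonicity. The paper additionally records the lower bound $\eta_n\ge 1$ from the Fr\'echet--Hoeffding upper bound, but this is not needed for the stated conclusion.
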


Corollary~\ref{corollary_PQD_convergence_speed} implies that under its
hypothesis and when $\Copula_n$ is PQD for each $n$, the upper bound to the
uniform convergence rate in \eqref{eq:central} is higher compared to the iid
case, but the limiting distribution remains unchanged. The following example
illustrates this point.
\begin{example}\label{ex:4.2}
  As discussed in Section~\ref{sec:3.1}, meta-extreme sequences have power
  diagonals provided that $\STDF_n(1,\dots, 1) \to \infty$ as $n\to \infty$. Because
  any stdf is bounded above by $1$, their copulas are
  also PQD.
  Consider e.g. the logistic sequence with standard
  normal margins, constructed as in Example~\ref{ex:Mai}. Here, for each
  $n \in \IN$, $\Copula_n$ is the Gumbel--Hougaard copula with parameter
  $\theta \ge 1$, where $\theta=1$ corresponds to the iid case, and
  $\eta_n = \rate_n=
  n^{1/\theta}$. %
  Using the same normalizing sequences $(\an_n)$ and $(\bn_n)$ as in
  Example~\ref{example_moving_max_convergence_rate},
  Corollary~\ref{corollary_PQD_convergence_speed} leads to the uniform bound
  $\sup_{x\in\IR} \abs{\Prob(M_{n}\leq  \an_{\ceil{\eta_n}} x + \bn_{\ceil{\eta_n}} ) - \Lambda(x)}
  \leq \frac{3}{\log \left( \ceil{n^{1/\theta}}\right)} + 3\e^{-1}n^{-1/\theta} \approx  \frac{3\theta}{\log \left( n \right)}$.
  Again, a higher degree of dependence, i.e.\ a larger value of $\theta$,
  leads to a higher upper bound of the uniform convergence rate.
\end{example}

  While it may be intuitive that dependence slows down convergence rates, the
  next example shows that this may not always be true. In fact, convergence may
  even be faster compared to the iid case if the maximum does not need to be
  normalized. An extreme case in point is a comonotone sequence, where
  $M_n \sim F$ for each $n$ and the convergence is instantaneous.
\begin{example}\label{example_power_diagonal_convergence_speed}
  Consider the meta-extreme sequence in Example~\ref{example_Cuadras_Auge},
  where $\Copula_n$ is the Cuadras-Aug\'e copula with parameter
  $\theta \in (0,1)$. Because
  $\eta_n = \STDF_n(1,\ldots,1) = (1-(1-\theta)^n)/\theta$ so that
  $\eta_n \to 1/\theta$, this sequence does not have a power diagonal in the
  sense of Definition~\ref{def:3.1}, so that
  Corollary~\ref{corollary_PQD_convergence_speed} does not apply. Nonetheless,
  we can calculate that
  $\sup_{x\in\IR} \abs{\Prob(M_{n}\leq x) - \pMargin^{1/\theta}(x)} = \sup_{x\in\IR}\abs{\pMargin^{\eta_n}(x) - \pMargin^{1/\theta}(x)} = \sup_{u\in[0,1]} \abs{u^{1/\theta-(1-\theta)^n/\theta} - u^{1/\theta}}$,
where equality holds because $\pMargin$ is continuous.
By Lemma~\ref{lem:unif:rate:pow:fun}~\ref{LemmaA3:1} in the Supplementary Material \cite{HerrmannHofertNeslehova2024},
$\beta(n) := \sup_{u\in[0,1]} |u^{1/\theta-(1-\theta)^n/\theta} - u^{1/\theta}| \le (1 - \theta)^n\{1-(1-\theta)^n\}^{(1-\theta)^{-n}-1}$.
For all $n$ large enough we have, by
Lemma~\ref{lem:unif:rate:pow:fun}~\ref{LemmaA3:2}, that
$\beta(n) \leq 3\e^{-1}(1-\theta)^n$, which shows that not only
$\lim_{n\to\infty} \beta(n) = 0$ but also (in contrast to
Example~\ref{example_moving_max_convergence_rate} and Example~\ref{ex:4.2}) that a
higher degree of dependence, i.e.\ a larger value of $\theta$, leads to a
\emph{smaller} upper bound on the uniform convergence rate. When $\theta=1$, the
sequence is comonotone and $\beta(n) =0$ for all $n$.
\end{example}
We conclude this section with a partial converse to
Theorem~\ref{theorem_convergence_rate} which allows us to infer the convergence
rate of the dependence structure when the overall convergence rate is known.

\begin{proposition}%
\label{proposition_convergence_rate_dependence_structure}
Consider $X_1,X_2,\ldots\sim\pMargin$ where the continuous margin $\pF$ satisfies assumption~\ref{itconvrate:1} of Theorem~\ref{theorem_convergence_rate}. Suppose also that there exists  $\rate \colon \IN \to (0,\infty)$ with $\rate_n \to \infty$ for $n\to\infty$ such that the following conditions are fulfilled:
  \begin{enumerate}[label=(\roman*), labelwidth=\widthof{(iii)}]
  \item%
    There exists a distribution function $\pD : [0,1] \to [0,1]$ such that
    $ \sup_{x\in\IR}\abs{\Prob(M_{n} \le
      \an_{\ceil{\rate_n}}x+\bn_{\ceil{\rate_n}}) - \pD\circ\EVDist(x)} \leq
    \gamma(n) $, where $\gamma(n)\to 0$ as $n\to\infty$;
  \item%
    For each $n \in \IN$, the diagonal power distortion $\pD_n^{\rate}$ with
    respect to $\rate$ is H\"older continuous with constants $K_n$ and
    $\kappa_n$, and there exist $K,\kappa >0$ such that
    $\sup_{n\in\IN} K_n \leq K$ and $\inf_{n\in\IN} \kappa_n \geq \kappa$.
  \end{enumerate}
  Then
  $ \sup_{u\in[0,1]}\abs{\pD_n^{\rate}(u)-\pD(u)} \leq K
  \left(\iidrate\left(\ceil{\rate_n}\right) +
    3\e^{-1}\I_{\{r_n\notin\IN\}}/\rate_n\right)^{\kappa} + \gamma(n), $ where
  the right-hand side
  converges to $0$ as $n\to\infty$.
\end{proposition}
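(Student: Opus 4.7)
The plan is to exploit the identity $\Prob(M_n \le z) = \Cdiag_n(\pF(z)) = \pD_n^{\rate}(\pF^{\rate_n}(z))$, valid for every $z \in \IR$, which rewrites hypothesis~(i) as
\begin{align*}
\sup_{x \in \IR}\abs{\pD_n^{\rate}(\pF^{\rate_n}(\an_m x + \bn_m)) - \pD(\EVDist(x))} \le \gamma(n), \qquad m := \ceil{\rate_n}.
\end{align*}
To transfer this control (over arguments of a special form) to a uniform bound at arbitrary $u \in [0,1]$, I use the continuity of $\EVDist \in \GEV$ and its surjectivity onto $(0,1)$: for each $u \in (0,1)$ pick $x_u \in \IR$ with $\EVDist(x_u)=u$, and observe that $u \in \{0,1\}$ is handled automatically since $\pD_n^{\rate}(0)=\pD(0)=0$ and $\pD_n^{\rate}(1)=\pD(1)=1$ (the latter equalities follow from $\pD$ being a distribution function on $[0,1]$, or by letting $x \to \pm\infty$ in hypothesis~(i)).

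For fixed $u \in (0,1)$ and $z_n := \an_m x_u + \bn_m$, the triangle inequality gives
\begin{align*}
\abs{\pD_n^{\rate}(u) - \pD(u)} \le \abs{\pD_n^{\rate}(u) - \pD_n^{\rate}(\pF^{\rate_n}(z_n))} + \abs{\pD_n^{\rate}(\pF^{\rate_n}(z_n)) - \pD(\EVDist(x_u))},
\end{align*}
whose second summand is at most $\gamma(n)$ by the rewritten hypothesis~(i). Since $u = \EVDist(x_u)$ and all relevant quantities lie in $[0,1]$, hypothesis~(ii) together with $K_n \le K$ and $\kappa_n \ge \kappa$ yields
\begin{align*}
\abs{\pD_n^{\rate}(u) - \pD_n^{\rate}(\pF^{\rate_n}(z_n))} \le K_n \abs{\EVDist(x_u) - \pF^{\rate_n}(z_n)}^{\kappa_n} \le K \abs{\EVDist(x_u) - \pF^{\rate_n}(z_n)}^{\kappa}.
\end{align*}
A further triangle inequality splits $\abs{\EVDist(x_u) - \pF^{\rate_n}(z_n)} \le \abs{\EVDist(x_u) - \pF^m(z_n)} + \abs{\pF^m(z_n) - \pF^{\rate_n}(z_n)}$, where the first summand is at most $\iidrate(\ceil{\rate_n})$ by assumption~\ref{itconvrate:1} of Theorem~\ref{theorem_convergence_rate}, and the second is at most $\sup_{v \in [0,1]} \abs{v^m - v^{\rate_n}}$.

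The control of this last supremum is the key technical step: it vanishes when $\rate_n \in \IN$ (since then $m = \rate_n$), while otherwise $m - \rate_n \in (0,1)$ and a direct optimization argument, captured by Lemma~\ref{lem:unif:rate:pow:fun} in the Supplementary Material and already invoked in Example~\ref{example_moving_max_convergence_rate} and Example~\ref{example_power_diagonal_convergence_speed}, yields $\sup_{v \in [0,1]} \abs{v^m - v^{\rate_n}} \le 3\e^{-1}/\rate_n$. Assembling all pieces and taking the supremum over $u \in [0,1]$ delivers the stated bound, whose right-hand side tends to zero because $\iidrate(\ceil{\rate_n}) \to 0$ (given $\iidrate(n) \to 0$ and $\ceil{\rate_n} \to \infty$), $1/\rate_n \to 0$, and $\gamma(n) \to 0$. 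The main obstacle is precisely this explicit bound on $\sup_v \abs{v^m - v^{\rate_n}}$, which is responsible for the constant $3\e^{-1}$ that mirrors the one in Theorem~\ref{theorem_convergence_rate}; the remaining steps are formal bookkeeping with the triangle inequality and the H\"older hypothesis.
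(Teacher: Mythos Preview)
Your proof is correct and follows essentially the same route as the paper's: pick $x_u$ with $\EVDist(x_u)=u$, apply the triangle inequality to insert $\pD_n^{\rate}(\pF^{\rate_n}(z_n))$, bound one piece by $\gamma(n)$ via hypothesis~(i), and control the other via H\"older continuity together with the splitting $|\EVDist(x_u)-\pF^{\rate_n}(z_n)| \le \iidrate(\ceil{\rate_n}) + \sup_v|v^{\ceil{\rate_n}}-v^{\rate_n}|$. The only cosmetic difference is that the paper packages the bound $\sup_v|v^{\ceil{\rate_n}}-v^{\rate_n}|\le 3\e^{-1}/\rate_n$ into Corollary~\ref{corollary_uniform_difference_upper_Iverson}, whereas you cite the underlying Lemma~\ref{lem:unif:rate:pow:fun} directly.
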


Proposition~\ref{proposition_convergence_rate_dependence_structure} can be used
to transfer convergence-rate results for time series to convergence rates
of the underlying dependence structure represented by
$\pD_n^{\rate}$. %
\begin{example}
  For the moving maximum process in Example~\ref{example_moving_max}, the
  diagonal power distortion is given by $\pD_n^{\rate}(u) = u^{(n+k)/(nk + n)}$
  with $\rate_n=n$. Clearly, $\pD_n^{\rate}$ is H\"older continuous with
  $\kappa_n = (n+k)/(nk + n) \le 1$ and $K_n = 1$. Because
  $\kappa_n \to 1/(k+1)$ as $n\to \infty$, assumptions~\ref{itconvrate:1} and
  \ref{itconvrate:2} in
  Proposition~\ref{proposition_convergence_rate_dependence_structure} hold.
\end{example}

  \section{Conclusion}\label{section_conclusion}
  We have investigated weak convergence limits of maxima of dependent sequences
  of identically distributed random variables after suitable affine
  normaliziation accomplished through subsequences of the normalizing sequences
  from the iid case.  We have worked under broad conditions on the diagonal of
  the underlying copula and demonstrated through various examples that these
  conditions are fulfilled by a large number of sequences.  The limiting
  distribution is a composition of the generalized extreme value limit of the
  associated iid sequence and a function, termed distortion, which can have
  various shapes. This characterization could be particularly useful for
  future development of statistical inference methods.

  We close by remarking that there exist dependent sequences whose maxima
  converge after standardization with sequences that cannot be subsequences of
  the normalizing sequences from the iid case. Consider the following example
  from \cite{Berman1962a}. Let $(Z_i)$ be a sequence of iid standard normal
  random variables, and set, for each $i \in \IN$,
  $X_i = \sqrt{\varrho} Z_0 + \sqrt{1-\varrho}Z_{i}$. It is easily verified that
  $X_i \sim \Normal(0,1)$ for each $i \in \IN$, and that
  $\mathrm{cor}(X_i, X_j) = \varrho$ whenever $i \neq j$. As is well-known,
  there exist sequences $(\an_n)$, $\an_n>0$ and $(\bn_n)$, such that
  $\{\max(Z_1,\dots, Z_n) - \bn_n\}/\an_n \indist \EVDist_{0,\mu,\sigma}$; from
  the convergence to types theorem \cite[Proposition~0.2]{Resnick1987} and
  \cite[Example~3.3.29]{EmbrechtsKluppelbergMikosch1997}, one has
  $\an_n = O( (2\log n)^{-1/2})$. %
  However, a direct calculation shows that
  $\{\max(X_1,\dots, X_n) -\dn_n\}/\cn_n \indist \Normal(0,\varrho)$, where for
  each $n \in \IN$, $\dn_n = \sqrt{1-\varrho} \bn_n$ and $\cn_n = 1$. If
  $\tilde\cn_n$ is of the form $\an_{\lceil \rate_n \rceil}$ for some rate
  $\rate$ with $\rate_n \to \infty$ as $n \to \infty$, we cannot possibly have
  that $\tilde\cn_n / \cn_n \to c \in (0,\infty)$. Even though several existing
  results are embedded in our framework, a unified theory thus remains to be
  developed.

\appendix
\section{Auxiliary results and their proofs}
  \begin{lemma}%
  \label{lemma_uniform_convergence}
  Denote by $f_n$ a sequence of non-decreasing functions that converges
  pointwise to a continuous limit $f$ on an interval $[a,b]$.  Then the
  convergence of $f_n$ to $f$ is uniform.
  \end{lemma}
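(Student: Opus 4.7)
The plan is to use the classical trick that combines uniform continuity of the limit $f$ on the compact interval $[a,b]$ with the monotonicity of each $f_n$. The point is that monotonicity of $f_n$ lets us control $f_n$ at an arbitrary point $x \in [a,b]$ by its values at the endpoints of any subinterval containing $x$, and on a sufficiently fine partition those endpoint values are close to the corresponding values of $f$ by pointwise convergence.

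More concretely, fix $\varepsilon > 0$. Since $f$ is continuous on the compact interval $[a,b]$, it is uniformly continuous there, so I pick $\delta > 0$ with $|f(x) - f(y)| < \varepsilon/3$ whenever $|x-y| < \delta$. Then I choose a partition $a = x_0 < x_1 < \cdots < x_k = b$ of mesh less than $\delta$. By pointwise convergence $f_n(x_i) \to f(x_i)$ at each of the finitely many $x_i$, there exists $N$ such that $|f_n(x_i) - f(x_i)| < \varepsilon/3$ for all $n \ge N$ and all $i \in \{0,1,\dots,k\}$.

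Now, for an arbitrary $x \in [a,b]$ locate $i$ with $x_i \le x \le x_{i+1}$. Using that $f_n$ is non-decreasing,
\begin{align*}
  f_n(x) \le f_n(x_{i+1}) \le f(x_{i+1}) + \varepsilon/3 \le f(x) + 2\varepsilon/3,
\end{align*}
and analogously $f_n(x) \ge f_n(x_i) \ge f(x_i) - \varepsilon/3 \ge f(x) - 2\varepsilon/3$. Hence $|f_n(x) - f(x)| \le 2\varepsilon/3 < \varepsilon$ uniformly in $x$, for all $n \ge N$, which is the required uniform convergence.

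There is no real obstacle here: the proof is a textbook application of Pólya's uniform convergence theorem for monotone sequences. The only thing worth double-checking is that the bound obtained is independent of $x$, which it is because $N$ depends only on the finite collection of partition points $\{x_0,\dots,x_k\}$, not on $x$ itself.
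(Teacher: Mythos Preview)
Your proof is correct and follows essentially the same approach as the paper's own proof: both use uniform continuity of $f$ to build a finite partition, pointwise convergence at the partition nodes to get a uniform index $N$, and monotonicity of $f_n$ to sandwich $f_n(x)$ between its values at neighboring nodes. The only cosmetic difference is that you split $\varepsilon$ into thirds while the paper uses halves.
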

  \begin{proof}
    Given that $f$ is continuous on a compact interval, it is uniformly
    continuous and hence for a given $\varepsilon > 0$ there is a partition
    $a = t_0 < t_1 < \cdots < t_K = b$ such that $x,y \in [t_i,t_{i+1}] \Rightarrow \abs{f(x)-f(y)} < \varepsilon/2$.
    For every $t_i$, $i=0,\ldots,K$, we also have $f_n(t_i) \to f(t_i)$ and
    hence there is a $n_{\varepsilon}$ such that for all
    $n \geq n_{\varepsilon}$ we have $\abs{f_n(t_i)-f(t_i)} < \varepsilon/2$
    simultaneously for all $i=0,\ldots,K$.  This immediately yields that for all
    $n \ge n_{\varepsilon}$, $f_n(t_{i+1}) < f(t_{i+1}) + \varepsilon/2$ and $f(t_{i}) - \varepsilon/2 < f_n(t_{i})$.
    Now fix $\varepsilon > 0$ and an arbitrary $x \in [a,b]$. Then
    $x \in [t_i,t_{i+1}]$ for some $i\in\{0,\ldots,K\}$.  By the monotonicity of
    $f_n$, $f_n(t_i) \leq f_n(x) \leq f_n(t_{i+1})$, and hence with the previous
    two inequalities $f(t_{i}) - \varepsilon/2 < f_n(t_{i}) \leq f_n(x) \leq f_n(t_{i+1}) < f(t_{i+1}) + \varepsilon/2$.
    Considering now $f(x)$ we have by design of the partition that
    $f(x) - \varepsilon/2 < f(t_{i})$ and $f(t_{i+1}) < f(x) + \varepsilon/2$.
    Therefore, $ f(x) - \varepsilon < f_n(x) < f(x) + \varepsilon$, or
    equivalently $\abs{f_n(x)-f(x)} < \varepsilon$.  Since $\varepsilon$ and $x$
    were arbitrary this concludes the proof.
  \end{proof}

  \begin{lemma}%
    \label{Lemma_function_composition}
    Denote by $(f_n)$ and $(g_n)$ two sequences of functions
    $f_n,g_n \colon \IR \to \IR$, and by $(h_n)_{n\geq 1}$ the composition
    $h_n = f_n \circ g_n$, i.e.\ $h_n(x) = f_n(g_n(x))$ for all
    $x \in \mathbb{R}$.  Suppose that $f_n$ converges uniformly to a function
    $f\colon \IR \to \IR$ which is continuous on $\mathcal{A} \subseteq \IR$ and
    that $g_n$ converges to pointwise on $\mathcal{A}$ to an arbitrary function
    $g\colon \IR \to \IR$, then $h_n$ converges pointwise to $h = f \circ g$ on
    $\mathcal{A}$.
  \end{lemma}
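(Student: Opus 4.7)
The plan is to prove pointwise convergence of $h_n$ to $h$ on $\mathcal{A}$ via a standard triangle-inequality argument that splits the error $|f_n(g_n(x))-f(g(x))|$ into a ``uniform'' piece and a ``continuity'' piece.

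Fix an arbitrary $x \in \mathcal{A}$ and $\varepsilon > 0$. I would first write
\begin{align*}
  |h_n(x) - h(x)|
  &= |f_n(g_n(x)) - f(g(x))| \\
  &\le |f_n(g_n(x)) - f(g_n(x))| + |f(g_n(x)) - f(g(x))|.
\end{align*}
The first term is bounded above by $\sup_{y \in \IR} |f_n(y) - f(y)|$, which tends to $0$ as $n \to \infty$ by the uniform convergence of $f_n$ to $f$ on all of $\IR$; in particular, there exists $N_1$ such that this supremum is less than $\varepsilon/2$ for all $n \ge N_1$.

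For the second term, I would invoke continuity of $f$ at the point $g(x)$: there exists $\eta > 0$ such that $|y - g(x)| < \eta$ implies $|f(y) - f(g(x))| < \varepsilon/2$. Since $g_n(x) \to g(x)$ by the pointwise convergence of $g_n$ to $g$ on $\mathcal{A}$, there exists $N_2$ such that $|g_n(x) - g(x)| < \eta$ for all $n \ge N_2$, which then gives $|f(g_n(x)) - f(g(x))| < \varepsilon/2$ for such $n$. Combining both bounds for $n \ge \max(N_1, N_2)$ yields $|h_n(x) - h(x)| < \varepsilon$, establishing pointwise convergence on $\mathcal{A}$.

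There is no serious obstacle here; the only subtlety worth flagging is that the continuity of $f$ is required precisely at the limit point $g(x)$, not at $g_n(x)$, which is why the decomposition is written so that $f$ (rather than $f_n$) is evaluated at the moving argument $g_n(x)$. The uniform convergence of $f_n$ on $\IR$ is strictly stronger than needed: uniform convergence on any set containing $\{g_n(x) : n \ge 1\} \cup \{g(x)\}$ would suffice, but the stated hypothesis makes the argument immediate without extracting such a set.
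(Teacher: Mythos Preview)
Your proof is correct and essentially identical to the paper's own argument: both fix $x\in\mathcal{A}$, split $|f_n(g_n(x))-f(g(x))|$ via the triangle inequality into $|f_n(g_n(x))-f(g_n(x))|+|f(g_n(x))-f(g(x))|$, bound the first term by $\sup_{y\in\IR}|f_n(y)-f(y)|$, and handle the second by continuity of $f$ at $g(x)$ together with $g_n(x)\to g(x)$. Your write-up is slightly more explicit (with the $\varepsilon/2$ bookkeeping and the closing remark on where continuity is actually used), but the substance is the same.
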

  \begin{proof}
    Fix an arbitrary $x \in \mathcal{A}$ and set $y_n = g_n(x)$ and $y = g(x)$
    for further reference.  Then $h_n(x) = f_n(y_n)$ and $h(x) = f(y)$ which
    leads to $\abs{h_n(x) - h(x)} = \abs{f_n(y_n) - f(y) + f(y_n) - f(y_n)} \leq \abs{f_n(y_n) - f(y_n)} + \abs{f(y_n) - f(y)}$.
    Due to pointwise convergence of $g_n$ to $g$ on $\mathcal{A}$, $y_n \to y$
    and hence the second term tends to zero by continuity of $f$.  For the first
    term we have $\abs{f_n(y_n) - f(y_n)} \leq \sup_{y\in\IR}\abs{f_n(y)-f(y)}$
    which converges to zero because $f_n \to f$ uniformly by assumption.
  \end{proof}
  \clearpage

  \begin{lemma}%
    \label{lem:unif:rate:pow:fun}%
    \begin{enumerate}[label=(\roman*), labelwidth=\widthof{(iii)}]
    \item\label{LemmaA3:1} For arbitrary fixed $a,b$ such that $0 < a < b$,
      \begin{align}\label{eq_max_of_power_difference}
        \sup_{u\in[0,1]} \abs{u^a-u^{b}} = (1-a/b)(b/a)^{a/(a-b)}.
      \end{align}
    \item\label{LemmaA3:2} For any fixed $a > 0$ a sequence $b_n \to 0$ we have
      for $n$ large enough that $\sup_{u\in[0,1]} \abs{u^a-u^{a+b_n}} \leq 3\e^{-1}\abs{b_n}/a$.
    \item\label{LemmaA3:3} For any fixed $b > 0$ a sequence $a_n \to \infty$ we
      have for $n$ large enough that $\sup_{u\in[0,1]} \abs{u^{a_n}-u^{a_n+b}} \leq 3\e^{-1}b/a_n$.
    \end{enumerate}
  \end{lemma}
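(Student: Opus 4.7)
The plan is to prove \ref{LemmaA3:1} by elementary calculus — locating the unique interior maximum of $u \mapsto u^a - u^b$ — and then derive \ref{LemmaA3:2} and \ref{LemmaA3:3} directly as special cases of the closed-form expression from \ref{LemmaA3:1}, analyzing the asymptotic behavior of the resulting factors.

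For \ref{LemmaA3:1}, I would consider $f(u) = u^a - u^b$ on $[0,1]$. Since $0 < a < b$, one has $u^a > u^b$ on $(0,1)$ and $f(0)=f(1)=0$, so $f > 0$ on $(0,1)$ and the supremum is attained at an interior critical point. Differentiating yields $f'(u) = au^{a-1} - bu^{b-1} = u^{a-1}(a - bu^{b-a})$, whose unique root in $(0,1)$ is $u^\ast = (a/b)^{1/(b-a)}$. Substituting back and using $(u^\ast)^a = (a/b)^{a/(b-a)} = (b/a)^{a/(a-b)}$ along with $(u^\ast)^{b-a} = a/b$ gives
\begin{align*}
f(u^\ast) = (u^\ast)^a\bigl(1 - (u^\ast)^{b-a}\bigr) = (b/a)^{a/(a-b)}(1 - a/b),
\end{align*}
which matches \eqref{eq_max_of_power_difference}.

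For \ref{LemmaA3:2}, I would split on the sign of $b_n$ and apply \ref{LemmaA3:1}. If $b_n > 0$, taking the roles of $(a,b)$ to be $(a, a+b_n)$ yields
\begin{align*}
\sup_{u\in[0,1]}|u^a - u^{a+b_n}| = \frac{b_n}{a+b_n}\Bigl(1 + \frac{b_n}{a}\Bigr)^{-a/b_n}
\le \frac{b_n}{a}\Bigl(1 + \frac{b_n}{a}\Bigr)^{-a/b_n}.
\end{align*}
If $b_n < 0$, taking the roles to be $(a+b_n, a)$ and simplifying gives
\begin{align*}
\sup_{u\in[0,1]}|u^a - u^{a+b_n}| = \frac{|b_n|}{a}\Bigl(1 - \frac{|b_n|}{a}\Bigr)^{(a-|b_n|)/|b_n|}.
\end{align*}
In both cases the trailing factor tends to $e^{-1}$ as $b_n \to 0$, hence is bounded by $3 e^{-1}$ for $n$ large enough, yielding the desired estimate. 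Part \ref{LemmaA3:3} is obtained analogously: applying \ref{LemmaA3:1} with $(a_n, a_n + b)$ gives the supremum $\tfrac{b}{a_n+b}(1 + b/a_n)^{-a_n/b} \le (b/a_n)(1+b/a_n)^{-a_n/b}$, and the latter factor tends to $e^{-1}$ as $a_n \to \infty$, so is again $\le 3 e^{-1}$ eventually.

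The proof is essentially routine calculus and elementary limit analysis; there is no substantive obstacle beyond keeping track of the two sign cases in \ref{LemmaA3:2} and verifying that the exponential factor $(1+x)^{-1/x}$-type expression, whose limit $e^{-1}$ is safely below $3 e^{-1}$, can be bounded eventually by $3 e^{-1}$ with ample slack. The factor of $3$ is just a convenient constant making the bound clean for all sufficiently large $n$.
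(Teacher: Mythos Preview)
Your proof is correct and part~\ref{LemmaA3:1} is identical to the paper's. For parts~\ref{LemmaA3:2} and~\ref{LemmaA3:3}, however, you take a genuinely simpler route than the paper. You simply observe that the exponential-type factor converges to $\e^{-1}$ and is therefore eventually below $3\e^{-1}$; this fully establishes the lemma as stated. The paper instead works harder: it invokes elementary inequalities such as $1-(1/x)\log(1+x)\le x/(x+1)$ and $\log(1+x)\le x$, combined with a Taylor-type estimate from \cite[Lemma~2.4.1~(ii)]{LeadbetterLindgrenRootzen1983}, to bound $(1+b_n/a)^{-a/b_n}-\e^{-1}$ explicitly. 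This yields a concrete range of $b_n$ (namely $b_n\in(-a\log 2,\,a\log 2/(1-\log 2))$) on which the bound $3\e^{-1}|b_n|/a$ is valid, rather than a bare ``for $n$ large enough''. That explicit threshold is not needed for the lemma itself, but the paper exploits it downstream (in Corollaries~\ref{corollary_uniform_rate_upper_Iverson} and~\ref{corollary_uniform_difference_upper_Iverson}) to obtain inequalities valid for \emph{all} $n\in\IN$, not just eventually. So your argument is cleaner and perfectly adequate for the lemma, while the paper's more laborious approach buys quantitative control that is reused later.
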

  \begin{proof}\mbox{}
    \begin{enumerate}[label=(\roman*), labelwidth=\widthof{(iii)}]
    \item Fix arbitrary $0 < a < b$ and observe that the function $f$ given, for
      all $u\in [0,1]$, by $f(u) = u^a - u^{b}$ is continuous, positive, and
      satisfies $f(0) = f(1) = 0$.  Furthermore, $u^* = (b/a)^{1/(a-b)}$ is
      easily seen to be the maximizer of $f$ given that
      $f^\prime(u) = au^{a-1} -
      bu^{b-1}$. Equation~\eqref{eq_max_of_power_difference} then follows from
      the fact that $f(u^*) = (1-a/b)(b/a)^{a/(a-b)}$.
    \item Fix an arbitrary $a >0$ and let $n\in\IN$ be sufficiently large such that
      we have $b_n \in (-a \log(2), a \log(2)/(1-\log(2)))$. If $b_n =0$, then the claim
      is trivially true. Otherwise, a case distinction needs to be made,
      depending on the sign of $b_n$.

      In the case when $b_n >0$, we make use of the inequality
      $1-(1/x)\log(1+x) \le x/(x+1)$ valid for all $ x > 0$. By setting
      $x=b_n/a$, and noting that $b_n < a \log(2)/(1-\log(2))$ is equivalent to
      $b_n/(b_n+a) < \log(2)$, we obtain
      $1-\frac{a}{b_n} \log(1+ \frac{b_n}{a}) \le \frac{b_n}{b_n + a}
      < \log(2)$.  This allows us to apply
      \cite[Lemma~2.4.1~(ii)]{LeadbetterLindgrenRootzen1983} to derive that for
      some $\theta\in (0,1)$,
      \begin{align*}
        (1+b_n/a)^{-a/b_n} - \e^{-1}
        &= \e^{-a \log(1+b_n/a)/b_n} - \e^{-1}\\
        &= \e^{-1}\left[\{1-a \log(1+b_n/a)/b_n\} + \theta\{1-a\log(1+b_n/a)/b_n\}^2\right]\\
        &\le\e^{-1}\frac{b_n}{a+b_n} \left( 1+\theta \frac{b_n}{a+b_n} \right) \le 2\e^{-1}\frac{b_n}{a+b_n},
      \end{align*}
      where the last inequality stems from $b_n/(b_n + a) \le \log(2) <
      1$. Equation~\eqref{eq_max_of_power_difference} from part~\ref{LemmaA3:1}
      then leads to
      \begin{align*}
        \sup_{u\in[0,1]} \abs{u^a-u^{a+b_n}}&=(1+b_n/a)^{-a/b_n} \frac{b_n}{a+b_n} = \bigl\{\e^{-1} + (1+b_n/a)^{-a/b_n} - \e^{-1}\bigr\} \frac{b_n}{a+b_n}\\
                                            &\leq \left(\e^{-1} + 2\e^{-1}\frac{b_n}{a+b_n}\right) \frac{b_n}{a+b_n} = \e^{-1} \frac{b_n}{a+b_n} \left(1 + \frac{2b_n}{a+b_n}\right).
      \end{align*}
      Using again that $b_n/(b_n + a)< 1$ and further that $\theta \in (0,1)$
      and $b_n/(a+b_n) \le b_n/a$, we finally get
      $\sup_{u\in[0,1]} \abs{u^a-u^{a+b_n}} \le 3 \e^{-1} b_n/a$.

      The case when $b_n < 0$ is similar, but here we make use of the inequality
      $\log(1+x) \le x$ valid for all $x > -1$, which leads to
      $1-(1+1/x)\log(1+x) \le -x$ whenever $x \in (-1,0)$. Setting $x = b_n /a$,
      we then have that $1-(1+\frac{a}{b_n})\log(1 + \frac{b_n}{a}) \le -\frac{b_n}{a} < \log(2)$,
      where the last inequality holds because $n$ is assumed large enough so
      that $b_n > -a \log(2)$. \cite[Lemma~2.4.1~(ii)]{LeadbetterLindgrenRootzen1983} then
      implies that for some $\theta \in (0,1)$,
      \begin{align*}
         &\phantom{{}={}}(1+b_n/a)^{-(1+a/b_n)} - \e^{-1} = \e^{-(1+a/b_n)\log(1+b_n/a)} - \e^{-1} & \quad\\
         &= \e^{-1}\left[1 - (1+a/b_n)\log(1+b_n/a) +
           \theta\bigl\{
           1 - (1+a/b_n)\log(1+b_n/a)
           \bigr\}^2
           \right]\\
         &\le \frac{|b_n|}{a}\e^{-1}\left(1 + \theta \frac{\abs{b_n}}{a}\right) \le 2\e^{-1}\frac{\abs{b_n}}{a},
      \end{align*}
      where the last inequality derives from $\abs{b_n}/a < \log(2)
      <1$. Equation~\eqref{eq_max_of_power_difference} from part~\ref{LemmaA3:1} then
      yields
      \begin{align*}
        \sup_{u\in[0,1]} \abs{u^a-u^{a+b_n}}
        &= -\frac{b_n}{a}(1+b_n/a)^{-(1+a/b_n)}
          = -\frac{b_n}{a}\bigl\{\e^{-1} + (1+b_n/a)^{-(1+a/b_n)} - \e^{-1}\bigr\}\\
        &\leq -\frac{b_n}{a}\left(\e^{-1} +2\e^{-1}\frac{\abs{b_n}}{a}\right)\le 3\e^{-1}\frac{\abs{b_n}}{a},
      \end{align*}
      as claimed, where the last step utilizes that $\abs{b_n}/a <1$ one more
      time.
    \item First note that $(1-\log(2))/\log(2) \approx 0.4427$. Suppose that $n$
      is sufficiently large so that $b(1-\log(2))/\log(2) \leq a_n$. We then
      have $b/(a_n+b) \leq \log(2)$. Calling again on the inequality
      $1 - \log(1+x)/x \leq x/(x+1)$ valid for all $x>0$, we obtain with
      $x = b/a_n$ that $1 - (a_n/b)\log(1+b/a_n) \leq b/(a_n+b) \le \log(2)$.
      Applying \cite[Lemma~2.4.1~(ii)]{LeadbetterLindgrenRootzen1983} then yields that for some $\theta\in(0,1)$,
      \begin{align*}
        (1+b/a_n)^{-a_n/b} - \e^{-1}
        &= \e^{-a_n \log(1+b/a_n)/b} - \e^{-1}\\
        &=  \e^{-1}\left[\{1-a_n \log(1+b/a_n)/b\} + \theta\{1-a_n\log(1+b/a_n)/b\}^2\right]\\
        &\le \e^{-1}\frac{b}{a_n+b} \left( 1+ \frac{\theta b}{a_n+b} \right) \le 2\e^{-1}\frac{b}{a_n+b},
      \end{align*}
      where the last inequality holds since $b/(b+a_n) \leq \log(2) <
      1$. Equation~\eqref{eq_max_of_power_difference} in part~\ref{LemmaA3:1} then
      gives
      \begin{align*}
        \sup_{u\in[0,1]} \abs{u^{a_n}-u^{a_n+b}}&\le \frac{b}{b+a_n}(1+b/a_n)^{-a_n/b} = \frac{b}{b+a_n}\left\{\e^{-1} + (1+b/a_n)^{-a_n/b} - \e^{-1}\right\}\\
                                                &\leq  \frac{b}{b+a_n} \left(\e^{-1} + 2\e^{-1}\frac{b}{a_n+b}\right) \leq 3 \e^{-1} \frac{b}{b+a_n},
      \end{align*}
      where $b/(b+a_n)< 1$ is used in the last step. Since $b/(b+a_n) < b/a_n$,
      the claim follows.
    \end{enumerate}
  \end{proof}

  \begin{corollary}%
    \label{corollary_pointwise_convergence_power_limit}
    Denote by $(y_n)$ a sequence of real numbers such that $y_n \to y > 0$ and
    by $(\pMargin_n)$ a sequence of distribution functions on $\IR$ that
    converges weakly to a distribution function $\pMargin$ on $\IR$.  Then
    $\pMargin_n^{y_n}$ converges weakly to $\pMargin^{y}$.
  \end{corollary}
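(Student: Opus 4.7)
The plan is to verify directly that $\pMargin_n^{y_n}(x) \to \pMargin^y(x)$ at every continuity point of $\pMargin^y$, which by definition is what weak convergence requires. Since $y > 0$, the map $u \mapsto u^y$ is a continuous strictly increasing bijection of $[0,1]$ onto itself, so the continuity points of $\pMargin^y$ coincide with those of $\pMargin$. Fix such a continuity point $x$; by weak convergence, $\pMargin_n(x) \to \pMargin(x)$, and by hypothesis $y_n \to y > 0$. Thus the problem reduces to showing joint continuity of $(u,a) \mapsto u^a$ at the point $(\pMargin(x), y) \in [0,1] \times (0,\infty)$, with the convention $0^a := 0$.

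I would then split into two cases. If $\pMargin(x) > 0$, then $\pMargin_n(x) > 0$ eventually, and
\begin{align*}
\pMargin_n(x)^{y_n} = \exp\bigl(y_n \log \pMargin_n(x)\bigr) \longrightarrow \exp\bigl(y \log \pMargin(x)\bigr) = \pMargin(x)^y,
\end{align*}
by continuity of $\log$ and $\exp$. If $\pMargin(x) = 0$, then $\pMargin^y(x) = 0$, and I need $\pMargin_n(x)^{y_n} \to 0$. Since $y_n \to y > 0$, there is an index beyond which $y_n \geq y/2$; since $\pMargin_n(x) \to 0$, there is an index beyond which $\pMargin_n(x) < 1$. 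For all sufficiently large $n$ both hold, hence $\pMargin_n(x)^{y_n} \leq \pMargin_n(x)^{y/2} \to 0$, using that $u \mapsto u^{y/2}$ is continuous at $0$.

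Combining the two cases gives pointwise convergence $\pMargin_n^{y_n}(x) \to \pMargin^y(x)$ at every continuity point of $\pMargin^y$. Since $\pMargin^y$ is itself a distribution function (being a composition of the continuous nondecreasing map $u\mapsto u^y$ with $\pMargin$), this is precisely weak convergence. The only subtlety in the argument is the boundary case $\pMargin(x) = 0$, where $(u,a)\mapsto u^a$ is not smooth; I expect this to be the sole point requiring care, handled by the bound $\pMargin_n(x)^{y_n} \leq \pMargin_n(x)^{y/2}$ above.
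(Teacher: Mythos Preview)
Your proof is correct. It differs from the paper's in that the paper proceeds via two auxiliary lemmas: it first invokes Lemma~\ref{lem:unif:rate:pow:fun}\ref{LemmaA3:2} to obtain the explicit uniform bound $\sup_{u\in[0,1]}|u^{y}-u^{y_n}| \le 3\e^{-1}|y_n-y|/y$, establishing that $g_n(u)=u^{y_n}$ converges uniformly to $g(u)=u^y$ on $[0,1]$, and then applies the composition Lemma~\ref{Lemma_function_composition} to conclude $g_n\circ\pMargin_n \to g\circ\pMargin$ at continuity points. Your argument bypasses both lemmas by working pointwise and exploiting joint continuity of $(u,a)\mapsto u^a$ on $(0,1]\times(0,\infty)$ directly, with a separate elementary bound at $u=0$. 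Your route is more self-contained and slightly shorter; the paper's route is more modular, since the uniform power bound and the composition lemma are reused repeatedly elsewhere (notably in the convergence-rate results of Section~\ref{sec:unif:conv:rate}).
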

  \begin{proof}
    Because $\pMargin^{y}$ and $\pMargin$ have the same points of continuity, we
    need to show that $\pMargin_n^{y_n}(x) \to \pMargin^y(x)$ as $n \to \infty$
    at any continuity point $x$ of $\pMargin$. To this end, set $a = y$ and
    $b_n = y_n-y$ and define the functions $g_n$ and $g$ for all $u \in [0,1]$
    by $g_n(u) = u^{y_n}$ and $g(u) = u^y$,
    respectively. Lemma~\ref{lem:unif:rate:pow:fun}~\ref{LemmaA3:2} then
    implies that $\sup_{u\in[0,1]} \abs{g(u)-g_n(u)} = \sup_{u\in[0,1]} \abs{u^{a}-u^{a + b_n}} \leq \frac{3}{y} \e^{-1}\abs{y_n-y} \to 0$,
    i.e.\ $g_n$ converges uniformly to $g$. Given that $g$ is obviously
    continuous, and $\pMargin_n(x) \to \pMargin(x)$ at any continuity point of
    $\pMargin$ we can conclude from Lemma~\ref{Lemma_function_composition} that
    the composition $g_n \circ \pMargin_n$ converges pointwise to
    $g \circ \pMargin$ at any continuity point of $\pMargin$ as claimed.
  \end{proof}

  \begin{corollary}%
    \label{corollary_uniform_rate_upper_Iverson}
    Denote by $(r_n)$ a sequence of positive real numbers such that
    $r_n \to \infty$.  Then for all $n\in\IN$ we have
    \begin{align*}
      \sup_{u\in[0,1]} \abs{u-u^{r_n/\ceil{r_n}}} \leq 3\e^{-1}\frac{1}{\ceil{r_n}},
    \end{align*}
    where the right-hand side tends to $0$ as $n\to \infty$.
  \end{corollary}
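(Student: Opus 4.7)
The plan is to invoke Lemma~\ref{lem:unif:rate:pow:fun}\ref{LemmaA3:1}, which provides the exact value of $\sup_{u\in[0,1]} \abs{u^a - u^b}$ for $0 < a < b$. First I would dispatch the trivial case $r_n \in \IN$: in that situation $r_n/\ceil{r_n} = 1$, so $u - u^{r_n/\ceil{r_n}}$ vanishes identically and the claimed bound holds automatically.

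For $r_n \notin \IN$, I would set $a = r_n/\ceil{r_n} \in (0,1)$ and $b = 1$, so that $0 < a < b$. Lemma~\ref{lem:unif:rate:pow:fun}\ref{LemmaA3:1} then yields
\[
\sup_{u\in[0,1]} \abs{u - u^{r_n/\ceil{r_n}}} = \left(1 - \frac{r_n}{\ceil{r_n}}\right)\left(\frac{r_n}{\ceil{r_n}}\right)^{r_n/(\ceil{r_n} - r_n)}.
\]
Introducing the gap $\delta_n = \ceil{r_n} - r_n \in (0,1)$, the first factor equals $\delta_n/\ceil{r_n}$ and is therefore bounded above by $1/\ceil{r_n}$, while the second factor has base in $(0,1)$ and positive exponent, hence is bounded above by $1$. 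Combining these two estimates gives $\sup \leq 1/\ceil{r_n}$, which is in turn at most $3\e^{-1}/\ceil{r_n}$ since $3\e^{-1} > 1$. Convergence of the bound to $0$ is immediate from $r_n \to \infty$ and hence $\ceil{r_n} \to \infty$.

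I do not anticipate any genuine obstacle: the result is essentially arithmetic built on top of Lemma~\ref{lem:unif:rate:pow:fun}. A stylistic variant would be to apply Lemma~\ref{lem:unif:rate:pow:fun}\ref{LemmaA3:2} with $a = 1$ and $b_n = r_n/\ceil{r_n} - 1 = -\delta_n/\ceil{r_n} \to 0$, which would yield the constant $3\e^{-1}$ natively; however, that lemma only guarantees the bound for $n$ sufficiently large, so a separate small-$n$ argument would be required. Routing through Lemma~\ref{lem:unif:rate:pow:fun}\ref{LemmaA3:1} avoids this case distinction and delivers the bound uniformly in $n\in\IN$.
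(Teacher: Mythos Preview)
Your proof is correct and in fact takes a slightly different route than the paper. The paper applies Lemma~\ref{lem:unif:rate:pow:fun}\ref{LemmaA3:2} with $a=1$ and $b_n = r_n/\ceil{r_n}-1$, exactly the variant you mention at the end: it then needs to observe that the condition $b_n>-\log 2$ in the proof of part~\ref{LemmaA3:2} is equivalent to $r_n>1$, and patches the remaining case $r_n\le 1$ by noting that then $\ceil{r_n}=1$ and the right-hand side $3\e^{-1}>1$ dominates trivially. Your argument through part~\ref{LemmaA3:1} is cleaner: you obtain the exact supremum, bound each factor by~$1$ and $1/\ceil{r_n}$ respectively, and get the sharper estimate $\sup\le 1/\ceil{r_n}$ uniformly in $n$, without any small-$n$ case distinction. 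The paper's route has the minor virtue of explaining where the constant $3\e^{-1}$ comes from, but your approach shows this constant is not needed here at all.
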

  \begin{proof}
    Set $a=1$ and $b_n = r_n / \ceil{r_n} -1 = (r_n - \ceil{r_n})/\ceil{r_n}$.
    Clearly, $b_n \leq 0$ and $b_n \to 0$ as $n\to
    \infty$. Part~\ref{LemmaA3:2} of Lemma~\ref{lem:unif:rate:pow:fun} and its
    proof then yield the inequality holds as soon as $b_n > -\log(2)$, which
    happens whenever $r_n > 1$.  However, for $r_n \leq 1$ we have
    $3\e^{-1}/\ceil{r_n} = 3\e^{-1} \approx 1.103638$ in which case the
    statement continues to be correct since we always have
    $\abs{u-u^{r_n/\ceil{r_n}}} \leq 1$.
  \end{proof}

  \begin{corollary}%
    \label{corollary_uniform_difference_upper_Iverson}
    Denote by $(r_n)$ a sequence of positive real numbers such that
    $r_n \to \infty$ and by $\pMargin$ a distribution function on $\IR$.  We
    then have for all $n\in\IN$ that
    \begin{align*}
      \sup_{x\in\IR} \abs{\pMargin^{\ceil{r_n}}(x) - \pMargin^{r_n}(x)} \leq 3\e^{-1}\frac{1}{r_n}
    \end{align*}
    and the right-hand side tends to $0$ as $n\to \infty$.
  \end{corollary}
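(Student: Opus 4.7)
The plan is to reduce the statement to a uniform bound on $[0,1]$ via a change of variables, and then appeal to Lemma~\ref{lem:unif:rate:pow:fun}~\ref{LemmaA3:3}. Since $\pMargin(x) \in [0,1]$ for every $x \in \IR$, we have the trivial domination
\begin{align*}
  \sup_{x\in\IR} \abs{\pMargin^{\ceil{r_n}}(x) - \pMargin^{r_n}(x)} \leq \sup_{u \in [0,1]} \abs{u^{\ceil{r_n}} - u^{r_n}}.
\end{align*}
Writing $b_n = \ceil{r_n} - r_n$ so that $b_n \in [0,1)$ and $b_n = 0$ exactly when $r_n \in \IN$, the right-hand side equals $\sup_{u \in [0,1]} \abs{u^{r_n} - u^{r_n + b_n}}$. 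If $r_n \in \IN$, then $b_n = 0$, the supremum vanishes, and the claimed inequality holds trivially.

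For the remaining case $b_n \in (0,1)$, I would apply Lemma~\ref{lem:unif:rate:pow:fun}~\ref{LemmaA3:3} with $a_n = r_n \to \infty$ and $b = b_n$. Although that lemma is stated for a fixed $b$, a direct inspection of its proof shows that the only requirement on $b$ is the inequality $b(1-\log 2)/\log 2 \leq a_n$, which in our setting becomes $r_n \geq (1-\log 2)/\log 2 \approx 0.4427$ because $b_n < 1$; this holds for all $n$ sufficiently large, and for such $n$ the lemma yields
\begin{align*}
  \sup_{u\in[0,1]} \abs{u^{r_n}-u^{r_n+b_n}} \leq \frac{3\e^{-1} b_n}{r_n} \leq \frac{3\e^{-1}}{r_n},
\end{align*}
using $b_n < 1$ in the last step.

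It remains to handle the finitely many small values of $n$ where $r_n < (1-\log 2)/\log 2$. For any such $n$, the right-hand side satisfies $3\e^{-1}/r_n > 3\e^{-1}\log 2 /(1-\log 2) > 1$, while the supremum on the left is bounded by $1$ because $\pMargin$ takes values in $[0,1]$; the stated inequality therefore holds trivially. This is the same device used in Corollary~\ref{corollary_uniform_rate_upper_Iverson}. Finally, $r_n \to \infty$ forces $3\e^{-1}/r_n \to 0$, which yields the convergence claim. The only subtlety in this argument is the passage from fixed $b$ to variable $b_n$ in Lemma~\ref{lem:unif:rate:pow:fun}~\ref{LemmaA3:3}; however, because $b_n$ is uniformly bounded by $1$, this poses no difficulty once the proof of that lemma is examined.
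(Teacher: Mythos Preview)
Your proof is correct and follows essentially the same route as the paper's: reduce to a supremum over $[0,1]$, apply Lemma~\ref{lem:unif:rate:pow:fun}~\ref{LemmaA3:3}, and handle small $r_n$ trivially because $3\e^{-1}/r_n > 1$ there. The one difference is that you work with the variable increment $b_n = \ceil{r_n} - r_n$ and must inspect the proof of the lemma to justify applying it with a non-fixed $b$, whereas the paper first uses the monotonicity bound $|u^{\ceil{r_n}} - u^{r_n}| \leq |u^{r_n+1} - u^{r_n}|$ (valid since $r_n \leq \ceil{r_n} \leq r_n + 1$) and then invokes the lemma with fixed $b=1$ as a black box. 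The paper's extra monotonicity step is a small cosmetic improvement that avoids the caveat you flag at the end; otherwise the two arguments are the same.
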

  \begin{proof}
    First note that
    $\sup_{x\in\IR} \abs{\pMargin^{\ceil{r_n}}(x) - F^{r_n}(x)}
      \leq \sup_{x\in\IR} \abs{\pMargin^{r_n + 1}(x) - F^{r_n}(x)}
      \leq \sup_{u\in[0,1]}$ $\abs{u^{r_n}-u^{r_n+1}}$. %
    Upon setting $b=1$ and $a_n=r_n$, part~\ref{LemmaA3:3} of
    Lemma~\ref{lem:unif:rate:pow:fun} and its proof imply that the right-hand
    side is bounded above by $3 \e^{-1}/r_n$ as soon as
    $r_n \geq (1-\log(2))/\log(2) \approx 0.442695$.  However, when
    $r_n < (1-\log(2))/\log(2)$ we have $3\e^{-1}/r_n > 1$ and so
    $|\pMargin^{\ceil{r_n}}(x) - \pMargin^{r_n}(x)| \leq 1$ is always true.  As
    such the statement holds for all $n\in\IN$, even though it only becomes
    informative when $3\e^{-1}/r_n < 1$.
  \end{proof}

  \section{Proofs of the main results}

  \begin{proof}[Proof of Theorem~\ref{theorem_continuous_limit}] For
    part~\ref{mainThm:1}, define $f_n$ and $g_n$ for all $x \in \mathbb{R}$
    and $u \in [0,1]$ by
    $f_n(x) = \pMargin^{\ceil{\rate_n}}(\an_{\ceil{\rate_n}} x +
    \bn_{\ceil{\rate_n}})$ and $g_n(u) = u^{\rate_n/\ceil{\rate_n}}$,
    respectively. The fact that $r_n \to \infty$ as $n \to \infty$ implies that
    $f_n \to \EVDist$ pointwise as $n\to\infty$. Now consider
    $h_n = g_n \circ f_n$ so that for all $x \in \mathbb{R}$,
    $h_n(x) = \pMargin^{\rate_n}(\an_{\ceil{\rate_n}} x +
    \bn_{\ceil{\rate_n}})$. Corollary~\ref{corollary_uniform_rate_upper_Iverson}
    together with Lemma~\ref{Lemma_function_composition} implies that
    $h_n \to \EVDist$ pointwise on $\mathbb{R}$ as $n\to\infty$. Furthermore,
    Lemma~\ref{lemma_uniform_convergence} implies that the convergence
    $\pD^{\rate}_n \to \pD$ is uniform. Hence, for all $x \in \mathbb{R}$ by
    Lemma~\ref{Lemma_function_composition},
    $\Prob(M_n \leq \an_{\ceil{\rate_n}} x + \bn_{\ceil{\rate_n}}
    )= %
    \Cdiag_n[\{\pMargin^{\rate_n}(\an_{\ceil{\rate_n}}
    x+\bn_{\ceil{\rate_n}})\}^{1/\rate_n}] = (\pD_n^{\rate} \circ h_n)(x) \to
    \pD\{\EVDist(x)\}$.
    As a uniform limit of functions with these properties, $D$ is continuous,
    non-decreasing, $\pD(0)=0$, and $\pD(1)=1$. Hence $\pD$ is a continuous
    distribution function on $[0,1]$.

    For part~\ref{mainThm:2}, $D_n^r(u) = \delta_n(u^{1/r_n})$ implies
    $\pD_n^{\rate}(u) = \pD(u)$ for $u \in \{0,1\}$ and all $n\in\IN$.  For a
    given continuity point $u\in(0,1)$ of $\pD$, pick $x\in\IR$ such that
    $u = \EVDist(x)$ and set
    $u_n = \pMargin^{\rate_n}(\an_{\ceil{\rate_n}}x + \bn_{\ceil{\rate_n}})$. By
    construction and
    Corollary~\ref{corollary_pointwise_convergence_power_limit},
    $u_n = (\pMargin^{\ceil{\rate_n}}(\an_{\ceil{\rate_n}}x +
    \bn_{\ceil{\rate_n}}))^{\rate_n/\ceil{\rate_n}} \to u$, and, by the triangle
    inequality,
    \begin{align}\label{Theorem2.3-decomposition}
      \abs{\pD_n^{\rate}(u)-\pD(u)} &\leq \abs{\pD_n^{\rate}(u)-\pD_n^{\rate}(u_n)} + \abs{\pD_n^{\rate}(u_n)-\pD(u)}.
    \end{align}
    Now choose an arbitrary $\varepsilon>0$. By assumption, there exists
    $n_1\in\IN$ such that
    $\abs{\pD_n^{\rate}(u_n)-\pD(u)}=
    \abs{\Prob((M_{n}-\bn_{\ceil{\rate_n}})/\an_{\ceil{\rate_n}} \leq x) -
      \pD\circ\EVDist(x)} < \varepsilon/2$ for all $n>n_1$.  To handle the first
    term in \eqref{Theorem2.3-decomposition}, Lipschitz continuity of copulas
    \cite[Theorem~2.10.7]{Nelsen2006} implies that
    $\abs{\pD_n^{\rate}(u)-\pD_n^{\rate}(u_n)}\leq
    n\abs{u^{1/\rate_n}-u_n^{1/\rate_n}}$.  Next, let $f_n(y) = y^{1/\rate_n}$,
    $y \in [0,1]$, and pick $\delta > 0$ small enough so that
    $\delta < \min(u,1-u)$. By assumption and $u_n\to u$, there exists $n_2$ so
    that for all $n > n_2$, $1/r_n < 1$ and at the same time
    $u_n \in (u-\delta, u+\delta)$. The derivative
    $f_n^\prime(y) = (y^{1/\rate_n-1})/\rate_n$ is then continuous and
    decreasing in $y$ on $[u-\delta, u+\delta]$ and attains its maximum at
    $u-\delta$. This means that $f_n$ is Lipschitz on $[u-\delta,u+\delta]$ with
    constant $L_n = f_n^\prime(u-\delta)$ whenever $n > n_2$. This constant can
    be bounded above via $L_n = (u-\delta)^{1/r_n-1}/r_n <
    1/((u-\delta)r_n)$. Because $1/r_n = O(1/n)$, there exists $\kappa > 0$ so
    that for all $n > n_2$, $n/r_n \le \kappa$. Put together, we obtain that
    $n\abs{u^{1/\rate_n}-u_n^{1/\rate_n}} \leq n L_n \abs{u-u_n} <
    \frac{n}{(u-\delta)\rate_n} \abs{u-u_n} \leq \frac{\kappa}{(u-\delta)}
    \abs{u-u_n}$.  Hence, there exists $n_3$ so that for all $n > n_3$,
    $|u-u_n| < \varepsilon(u-\delta)/2\kappa$. Thus
    $|\pD_n^{\rate}(u)-\pD_n^{\rate}(u_n)| < \varepsilon/2$ if
    $n > \max(n_2,n_3)$ and $|\pD_n^{\rate}(u)-\pD(u)| < \varepsilon$ if
    $n > \max(n_1,n_2,n_3)$.
  \end{proof}

  \begin{proof}[Proof of Proposition~\ref{cor:r(n)finite}]
    Part~\ref{cor:case:1} follows immediately via the same techniques as
    Theorem~\ref{theorem_continuous_limit}~\ref{mainThm:1}.  For part
    \ref{cor:case:2}, Lemma~\ref{lem:unif:rate:pow:fun}~\ref{LemmaA3:2}
    yields that
    $\sup_{x\in\IR} \abs{\pMargin^{\varrho}(x)-\pMargin^{\rate_n}(x)} =
    \sup_{u\in[0,1]} \abs{u^{\varrho}-u^{\varrho + (\rate_n-\varrho)}} \leq
    \frac{3}{\varrho \e}\abs{\rate_n-\varrho}$.  The result now follows along
    the same steps as in the proof of
    Theorem~\ref{theorem_continuous_limit}~\ref{mainThm:2} with the
    alternative definitions $u = \pMargin^{\varrho}(x)$ and
    $u_n = \pMargin^{\rate_n}(x)$; these are possible since $\pMargin$ is
    continuous.  The key inequality then becomes
    $n\abs{u^{1/\rate_n}-u_n^{1/\rate_n}} < \frac{n}{\rate_n
      (u-\delta)}\abs{u-u_n} = \frac{n}{\rate_n
      (u-\delta)}\abs{\pMargin^{\varrho}(x) - \pMargin^{\rate_n}(x)} \le
    \frac{n}{\rate_n (u-\delta)} \frac{3}{\varrho \e}\abs{\rate_n-\varrho} \to
    0$ which implies $\pD^{\rate}_n(u) \to \pD(u)$ for $u\in\{0,1\}$ and all
    $u\in(0,1)$ where $\pD$ is continuous.
  \end{proof}

  \begin{proof}[Proof of Corollary~\ref{corollary_FTG}]
    As in Theorem~\ref{theorem_continuous_limit}~\ref{mainThm:1}, let $M_n^*$
    denote the maximum of the first $n$ variables of an iid sequence with
    marginal distribution function $\pMargin$ and $(c_n^*)$, $c_n^* > 0$ and
    $(d_n^*)$ be such that
    $(M_n^* -
    d_n^*)/c_n^*\indist\EVDist_{\EVI,\mu,\sigma}$. From~\eqref{eq_generalized_FTP}
    and the Convergence to Types Theorem, we have that
    $\cn_n/\an_{\ceil{\rate_n}} \to c$ and
    $(\dn_n-\bn_{\ceil{\rate_n}})/\an_{\ceil{\rate_n}} \to d$ as $n \to \infty$
    for some $c > 0$ and $d \in \IR$, and that
    $\pG(x) = \pD\circ \EVDist_{\EVI,\mu,\sigma}(cx + d)$, $x \in \IR$. It is
    easily seen that
    $\EVDist_{\EVI,\mu,\sigma}(cx + d) =
    \EVDist_{\EVI,\tilde{\mu},\tilde{\sigma}}(x)$ with $\tilde{\mu} = (\mu-d)/c$
    and $\tilde{\sigma} = \sigma/c$.
  \end{proof}

  \begin{proof}[Proof of Theorem~\ref{theorem-FTG2}]
    Because $\pD$ is a strictly increasing, continuous bijection, it has a
    continuous inverse $\pD^{-1}$
    \cite[p.~53]{RoydenFitzpatrick2010}. %
    Hence, we can set $\EVDist = \pD^{-1} \circ \pG$ and note that the
    continuity points of $\EVDist$ and $\pG$ coincide. We now need to prove that
    $\EVDist$ is max-stable.  By assumption, for each continuity point $x$ of
    $\pG$,
    $\pD_n^{\rate} \{\pMargin^{\rate_n}(\cn_n x + \dn_n)\} \to \pG(x)$
    as $n\to \infty$. Again by assumption, $\pD_n^{\rate}$ is strictly
    increasing, so it is invertible and $(\pD_n^{\rate})^{-1} \to D^{-1}$
    pointwise on $[0,1]$; see \cite[Proposition~0.1]{Resnick1987}.  By
    Lemma~\ref{lemma_uniform_convergence} this convergence is uniform and
    hence, by Lemma~\ref{Lemma_function_composition},
    \begin{align}\label{eq:neededforProp3.1}
      \lim_{n\to \infty} \pMargin^{\rate_n}(\cn_n x + \dn_n) = \lim_{n \to \infty}(\pD_n^\rate)^{-1} \bigl[ \pD_n^\rate \{\pMargin^{\rate_n} (\cn_n x + \dn_n)\}\bigr]=\pD^{-1}\{\pG(x)\} = \EVDist(x)
    \end{align}
    for each continuity point $x$ of $\EVDist$.
    Therefore, for any $t > 0$ and any continuity point $x$ of $\EVDist$,
    \begin{align}
      \lim_{n\to\infty} \pMargin^{t\rate_n}(\cn_n x + \dn_n) = \EVDist^t(x)\label{theorem-FTG2:proof:1}
    \end{align}
    as $n \to \infty$. At the same time, Corollary~\ref{corollary_pointwise_convergence_power_limit} yields
    \begin{align}
      \lim_{n \to \infty} \pMargin^{t\rate_n}(\cn_{\ceil{tn}} x + \dn_{\ceil{tn}}) = \lim_{n \to \infty} \bigl\{\pMargin^{\rate_{\ceil{tn}}}(\cn_{\ceil{tn}} x + \dn_{\ceil{tn}})\bigr\}^{t \rate_n/\rate_{\ceil{tn}}} = \{\EVDist(x)\}^{t/\lambda(t)}.\label{theorem-FTG2:proof:2}
    \end{align}
    Applying the convergence to types theorem with $U = \EVDist^{t/\lambda(t)}$
    and $V = \EVDist^{t}$, to~\eqref{theorem-FTG2:proof:1}
    and~\eqref{theorem-FTG2:proof:2}, there exist functions $c(t) > 0$ and
    $d(t) \in \IR$ so that $\EVDist^{\lambda(t)}(x) = \EVDist\{c(t) x + d(t)\}$,
    $x\in \IR$. Because $\lambda$ is a bijection, it then follows that for any
    $t > 0$ and $x\in \IR$,
    $\EVDist^t(x) = \EVDist\bigl[c\{\lambda^{-1}(t)\} x +
    d\{\lambda^{-1}(t)\}\bigr]$. This means that $\EVDist$ is max-stable, and
    hence, by the classical Fisher--Tippett--Gnedenko theorem, $\EVDist\in\GEV$.
  \end{proof}

  \begin{proof}[Proof of Corollary~\ref{corollary_copula_diagonal_extremal_index}]
    To prove~\ref{item:diag:1}, set $x = \EVDist^{-1}(u)$ and
    $v_n = F^n(\cn_n x + \dn_n)$ so that $v_n \to \EVDist(x) = u$ as
    $n\to \infty$. For this $x$ and rate $r(n) = n$,
    $\Prob(M_n \le \cn_n x + \dn_n) = \pD_n^{\rate}(v_n)$. As in the proof of
    Theorem~\ref{theorem_continuous_limit}~\ref{mainThm:2}, we can argue that
    $\abs{\pD_n^{\rate}(v_n) - \pD_n^{\rate}(u)} \to 0$ as $n\to \infty$, so
    that $\Prob(M_n \le \cn_n x + \dn_n) \to \gamma$. From
    \cite[Theorem~2.2]{Leadbetter1983} we deduce that there exists
    $\theta \in [0,1]$ so that
    $\Prob(M_n \le \cn_n z + \dn_n) \to \EVDist^{\theta}(z)$ for any
    $z \in \IR$. Because $\gamma \in (0,1)$, the degenerate case $\theta =0$ is
    excluded. The claim now follows from
    Theorem~\ref{theorem_continuous_limit}~\ref{mainThm:2} and
    Definition~\ref{def:3.1}.

    Part~\ref{item:diag:2} is a direct consequence of
    \cite[Theorem~3.5.2]{LeadbetterLindgrenRootzen1983} and
    Theorem~\ref{theorem_continuous_limit}~\ref{mainThm:2}.
  \end{proof}

  \begin{proof}[Proof of Proposition~\ref{prop:Dun-violated}]
    Under the assumptions of Proposition~\ref{prop:Dun-violated}, the
    conditions of Theorem~\ref{theorem-FTG2} are satisfied and hence
    $\pG = \EVDist^\theta$ for some $\EVDist\in\GEV$. This implies that
    $\pG \in \GEV$. To show that the distributional mixing condition is
    violated, pick an arbitrary $x \in \IR$ with $\pG(x) \in (0,1)$, which also
    means that $\EVDist(x) \in (0,1)$. Because $\lambda$ is not of the form
    $\lambda(t) = \alpha t$, $t \in (0,1)$, $\alpha > 0$,
    (\cite[Proposition~1]{aczel1987}) implies that there exist
    $t_1, t_2 \in (0,1)$ such that $t_1 + t_2 \in (0,1)$ while
    $\lambda(t_1 + t_2) \neq \lambda(t_1) + \lambda(t_2)$.
    For large enough $n$ so that $\lceil n t_1 \rceil + \lceil nt_2 \rceil < n$,
    set $A = \{ 1,\dots, \lceil n t_1 \rceil\}$ and
    $B = \{n- \lceil n t_2 \rceil + 1, \dots, n\}$. The index gap between these
    two sets is $n - \lceil n t_2 \rceil - \lceil nt_1 \rceil= O(n)$, and is
    also eventually larger than $s_n$ for any sequence $(s_n) = o(n)$.  Next,
    recall from the proof of Theorem~\ref{theorem-FTG2} that
    \eqref{eq:neededforProp3.1} holds, which,
    together with Lemma~\ref{lemma_uniform_convergence} and
    Lemma~\ref{Lemma_function_composition}, implies that
    $\Prob(\max_{i \in A} X_i \leq u_n) = \delta_{\lceil n t_1\rceil}\{ F(\cn_n
    x + \dn_n)\} = \delta_{\lceil n t_1\rceil}([\{ F^{r_n}(\cn_n x +
    \dn_n)\}^{r_{\lceil nt_1 \rceil}/r_n}]^{1/r_{\lceil nt_1 \rceil}})$ tends to
    $H^{\theta \lambda(t_1)}(x)$ as $n \to \infty$. Similarly,
    $\Prob(\max_{i \in B} X_i \leq u_n) \to H^{\theta \lambda(t_2)}(x)$ as
    $n \to \infty$.  Because the sequence is exchangeable, and
    $\ceil{n(t_1 + t_2)} \leq \ceil{nt_1} + \ceil{nt_2} \leq \ceil{n(t_1 + t_2)}
    + 1$ the same argument also implies that
    \begin{align*}
      \lim_{n\to \infty}  \Prob(\max_{i \in A \cup B} X_i \leq u_n) &\le \lim_{n\to \infty}  \Prob(\max_{i \in \{1,\dots, \lceil n(t_1 + t_2)\rceil\}} X_i \leq u_n) = H^{\theta \lambda(t_1 +t_2)},\quad\text{and that}\\
      \lim_{n\to \infty}  \Prob(\max_{i \in A \cup B} X_i \leq u_n) &\ge \lim_{n\to \infty}  \Prob(\max_{i \in \{1,\dots, \lceil n(t_1 + t_2)\rceil+1\}} X_i \leq u_n)\\
                                                                    &\ge \lim_{n\to \infty} \{  \Prob(\max_{i \in \{1,\dots, \lceil n(t_1 + t_2)\rceil\}} X_i \leq u_n) - \Prob(X_{\lceil n(t_1 + t_2)\rceil+1} > u_n)\}.
    \end{align*}
    For the last inequality, note that %
    \begin{align*}
    \Prob(\max_{i \in \{1,\dots, \lceil n(t_1 + t_2)\rceil+1\}} X_i \leq u_n)=\Prob(\max_{i \in \{1,\dots, \lceil n(t_1 + t_2)\rceil\}} X_i \leq u_n, X_{\lceil n(t_1 + t_2)\rceil+1}\le u_n)
    \end{align*}
    Subtracting
    $\Prob(X_{\lceil n(t_1 + t_2)\rceil+1} > u_n) - \Prob(\max_{i \in \{1,\dots, \lceil n(t_1 + t_2)\rceil\}} X_i \leq u_n,
    X_{\lceil n(t_1 + t_2)\rceil+1}> u_n)\ge 0$ from the right hand side leads to the result.
    Now $\Prob(X_{\lceil n(t_1 + t_2)\rceil+1} > u_n) = 1- F(u_n)$ and
    $\lim_{n\to \infty} F(u_n) = \{F^{r_n}(u_n)\}^{1/r_n}=1$ by
    \eqref{eq:neededforProp3.1} and the fact that $r_n \to \infty$ as
    $n \to \infty$. Because
    $\lim_{n\to \infty} \Prob(\max_{i \in \{1,\dots, \lceil n(t_1 +
      t_2)\rceil\}} X_i \leq u_n) = H^{\theta \lambda(t_1 +t_2)}$, this implies
    that
    $\lim_{n\to \infty} \Prob(\max_{i \in A \cup B} X_i \leq u_n) \ge H^{\theta
      \lambda(t_1 +t_2)}$. Put together,
    $\lim_{n\to\infty}| \Prob(\max_{i \in A\cup B} X_i \leq u_n) - \Prob(\max_{i
      \in A} X_i \leq u_n)\Prob(\max_{i \in B} X_i \leq u_n)| = |H(x)^{\theta
      \lambda(t_1 + t_2)} - H(x)^{\theta \{\lambda(t_1) + \lambda(t_2)\}}|$,
    which is non-zero since $H(x) \in (0,1)$ and $\theta \neq 0$ as well as
    $\lambda(t_1 + t_2) \neq \lambda(t_1) + \lambda(t_2)$.
  \end{proof}

  \begin{proof}[Proof of Theorem~\ref{theorem_Archimedean_distortion_limit}]
    The convergence $\pD_n^{\rate}(u) \to \pD(u)$ is immediate for $u=0$ and
    $u=1$, so consider an arbitrary $u\in(0,1)$.  By continuity of $\gen$, we
    only need to establish the convergence of
    $\STDFone_n \invgen(u^{1/\rate_n})$. Setting
    $x_n = 1/\{\rate_n(1-u^{1/\rate_n})\}$, the latter expression can be
    rewritten as $\invgen\{1-1/(x_n \rate_n)\}/\invgen(1-1/\rate_n)$. To compute
    the limit of $x_n$ for $n\to\infty$, note that $r_n \to \infty$ because
    $\STDFone_n \to \infty$ as $n\to\infty$ by assumption, and
    $1 - \gen(1/t) \to 0$ as $t \to \infty$. Because $t(1-u^{1/t}) \to - \log u$
    as $t \to \infty$, we obtain that $x_n \to 1/(-\log u)$ as $n \to
    \infty$. Utilizing \cite[Proposition~1\,(d)]{LarssonNeslehova2011} with
    $\rho \in (0,1]$, we have that $1 - \gen(1/\cdot) \in \RV_{-\rho}$ if and
    only if $\invgen(1-1/\cdot) \in \RV_{-1/\rho}$. The uniform convergence
    theorem for regularly varying functions
    \cite[Theorem~1.5.2]{BinghamGoldieTeugels1987} implies that
    $\lim_{n\to \infty} \frac{\invgen\{1-1/(x_n \rate_n)\}}{\invgen(1-1/\rate_n)}
    = \{(-\log u)^{-1}\}^{-1/\rho} = (-\log u)^{1/\rho}$. Because $\gen$
    is continuous,
    $\gen\{\STDFone_n \invgen(u^{1/\rate_n})\} \to \gen\{(-\log u)^{1/\rho}\}$
    as $n \to \infty$. The remaining part of
    Theorem~\ref{theorem_Archimedean_distortion_limit} then follows from
    Theorem~\ref{theorem_continuous_limit}~\ref{mainThm:1}.
  \end{proof}

  \begin{proof}[Proof of Corollary~\ref{corollary_density_Archimedean_distortion}]
    The fact that $\pD$ is a distribution function with density $\dD$ and
    quantile function $\qD$ follows immediately from properties of $\gen$ and the
    logarithm.
    With the change of variables $(-\log u)^{1/\rho} = y$, one obtains the limit $\lim_{u \to 0} \dD(u)$ as stated. Finally,
    \begin{align*}
\lim_{u \to 1} \dD(u) &= \lim_{y \to 0} -\gen'(y)y^{1-\rho}\e^{(y^{\rho})}/\rho =
          \begin{cases}
            -\gen^\prime(0),&\text{if}\ \rho = 1,\\
            \lim_{y \to 0} -\gen'(y)y^{1-\rho}/\rho,&\text{if}\ \rho \in (0,1),
          \end{cases}
    \end{align*}
    as claimed.
  \end{proof}

  \begin{proof}[Proof of Proposition~\ref{prop_unique_archimax_limit}]
    Given that $\rate_n = 1/(1-\gen(1/\STDFone_n))$, we have that
    $\rate_{\ceil{nt}}/\rate_n = \frac{1/\{1-\gen(1/\STDFone_{\ceil{n
          t}})\}}{1/\{1-\gen(1/\STDFone_n)\}} =
    \frac{\smash{1-\gen(1/\STDFone_n)}}{1-\gen(1/\STDFone_{\ceil{n t}})} =
    \frac{\smash{1-\gen\left(1/\STDFone_n\right)}}{1-\gen[1/\{\STDFone_n(\STDFone_{\ceil{n
          t}}/\STDFone_n)\}]}$.  Setting
    $t_n = \STDFone_{\ceil{nt}}/\STDFone_n$, we have, by assumption, that
    $t_n \to \kappa(t)$. With $1 - \gen(1/\cdot) \in \RV_{-\rho}$,
    $\rho \in (0,1]$, and $\STDFone_n \to \infty$ we obtain, by regular
    variation combined with the uniform convergence theorem for regularly
    varying functions \cite[Theorem~1.5.2]{BinghamGoldieTeugels1987}, that
    $\lim_{n\to\infty}\rate_{\ceil{nt}}/\rate_n = \lambda(t)$, where
    $\lambda(t)=(\kappa(t))^{\rho}$. Clearly,
    $\lambda \colon (0,\infty) \to (0,\infty)$ is a bijection, and
    assumption~\ref{thm:A:dist:limit:2} of
    Theorem~\ref{theorem_Archimedean_distortion_limit} implies that
    $\rate_n \to \infty$. Because $\gen$ is strict, the copula diagonals
    $\Cdiag_n(u) = \gen\{\STDFone_n\invgen(u)\}$ as well as the limiting
    distortion $\pD(u) = \gen\{(-\log u)^{1/\rho}\}$ are continuous and strictly
    increasing.
    Now apply Theorem~\ref{theorem-FTG2}.
  \end{proof}

  \begin{proof}[Proof of Lemma~\ref{Lemma_Archimedean_GEV_Limit}]
    From \cite[Theorem~3.3]{HerrmannHofertNeslehova2023b}, a distribution
    function $\pD$ on $[0,1]$ satisfies $\pG = \pD \circ \EVDist \in \GEV$ for
    all $\EVDist\in\GEV$ if and only if $\pD$ is of the form
    $\e^{-\lambda (-\log u)^\gamma}$ for some $\lambda,\gamma > 0$. Equating
    this expression to $\gen(-\log u)$ for $u \in (0,1)$ and setting
    $t = -\log(u)$, we obtain that
    $\gen(t)=\e^{-\lambda t^\gamma}= \e^{-(ct)^{1/\theta}}$ with
    $\theta = 1/\gamma$ and $c= \lambda^{\theta}$. And the function
    $\e^{-(ct)^{1/\theta}}$ is regularly varying at $0$ with index
    $1/\theta$. However, from \cite[Remark~2]{LarssonNeslehova2011}, $\gen$ is
    convex only if $\theta \ge 1$.
  \end{proof}

  \begin{proof}[Proof of Theorem~\ref{theorem_convergence_rate}]
    Under the assumptions of Theorem~\ref{theorem_convergence_rate} the
    conditions of Theorem~\ref{theorem_continuous_limit}~\ref{mainThm:1} hold
    and hence $(M_n -\bn_{\ceil{\rate_n}})/\an_{\ceil{\rate_n}}$ converges
    weakly to $D \circ H$. By the triangle inequality,
    \begin{multline*}
      \abs{\Prob(M_{n} \leq \an_{\ceil{\rate_n}} x + \bn_{\ceil{\rate_n}}) - \pD(\EVDist(x))} \leq \sup_{x\in\IR} \abs{\pD\{\pMargin^{\rate_n}(\an_{\ceil{\rate_n}} x + \bn_{\ceil{\rate_n}})\} - \pD\{\EVDist(x)\}}\\
      +  \sup_{x\in\IR}\abs{\pD^\rate_n\{\pMargin^{\rate_n}(\an_{\ceil{\rate_n}} x + \bn_{\ceil{\rate_n}})\} - \pD\{\pMargin^{\rate_n}(\an_{\ceil{\rate_n}} x + \bn_{\ceil{\rate_n}})\} }.
    \end{multline*}
    By assumption~\ref{itconvrate:2}, the second term on the right-hand side
    is bounded above by $s(n)$. Using H\"older continuity of $\pD$ in
    assumption~\ref{itconvrate:3}, the first term is bounded above by
    $K\sup_{x\in\IR}\abs{\pMargin^{\rate_n}(\an_{\ceil{\rate_n}} x +
      \bn_{\ceil{\rate_n}}) - \EVDist(x)}^{\kappa}$.  If $\rate$ is
    integer-valued, $\ceil{\rate_n} = \rate_n$ for all $n \in \IN$, and
    assumption~\ref{itconvrate:1} gives the upper bound
    $K \{\beta^*(\rate_n)\}^\kappa$. Otherwise, the supremum in the previous
    display is bounded above by
    $\sup_{x\in\IR}\abs{ \pMargin^{\ceil{\rate_n}}(\an_{\ceil{\rate_n}} x +
      \bn_{\ceil{\rate_n}}) - \EVDist(x) } +
    \sup_{x\in\IR}\abs{\pMargin^{\rate_n}(\an_{\ceil{\rate_n}} x +
      \bn_{\ceil{\rate_n}}) - \pMargin^{\ceil{\rate_n}}(\an_{\ceil{\rate_n}} x +
      \bn_{\ceil{\rate_n}})}$.  The first term is at most
    $\beta^*(\ceil{\rate_n})$ by assumption~\ref{itconvrate:1}, while the
    second is at most $3\e^{-1}/\rate_n$ by
    Corollary~\ref{corollary_uniform_difference_upper_Iverson}.
  \end{proof}

\begin{proof}[Proof of Corollary~\ref{corollary_PQD_convergence_speed}]
  In view of the discussion immediately preceding
  Corollary~\ref{corollary_PQD_convergence_speed}, the case of monotonically
  decreasing $\iidrate$ and a PQD $\Copula_n$ remains to be considered. The
  Fr\'echet--Hoeffding inequality \cite[Theorem~2.10.12]{Nelsen2006} and the
  PQD property imply that $u^n \leq u^{\eta_n} \leq u$ for all $u\in[0,1]$ and
  hence $1 \le \eta_n \le n$.  As $\iidrate$ is monotonically decreasing,
  $\iidrate\left(\ceil{\eta_n}\right) \geq \iidrate\left(n\right)$.
\end{proof}

\begin{proof}[Proof of Proposition~\ref{proposition_convergence_rate_dependence_structure}]
  Calling on assumption~\ref{itconvrate:1} of
  Theorem~\ref{theorem_convergence_rate} and the triangle inequality, we can
  bound
  $\sup_{x\in\IR} \abs{\pMargin^{\rate_n}(\an_{\ceil{\rate_n}} x +
    \bn_{\ceil{\rate_n}}) - \EVDist(x)}$ from above by
  $\sup_{x\in\IR}\abs{\pMargin^{\rate_n}(\an_{\ceil{\rate_n}} x +
    \bn_{\ceil{\rate_n}}) - \pMargin^{\ceil{\rate_n}}(\an_{\ceil{\rate_n}} x +
    \bn_{\ceil{\rate_n}})} + \sup_{x\in\IR}\abs{
    \pMargin^{\ceil{\rate_n}}(\an_{\ceil{\rate_n}} x + \bn_{\ceil{\rate_n}}) -
    \EVDist(x) }$,
  where the second term is at most $\iidrate\left(\ceil{\rate_n}\right)$.  When
  $\ceil{\rate_n} = \rate_n$, the first term disappears; otherwise, it is at
  most $3/(\rate_n\e)$ by
  Corollary~\ref{corollary_uniform_difference_upper_Iverson}.

  For $u=0$ and $u=1$, we immediately have $\pD_n^{\rate}(u) = \pD(u)$ for all
  $n\in\IN$.  For a given $u\in(0,1)$, pick $x\in\IR$ such that $u = \EVDist(x)$
  and set
  $u_n = \pMargin^{\rate_n}(\an_{\ceil{\rate_n}}x + \bn_{\ceil{\rate_n}})$;
  clearly, such an $x$ exists since $\EVDist\in\GEV$ is continuous.  H\"older
  continuity of $\pD_n^{\rate}$ now implies that
  $\sup_{u\in[0,1]}\abs{\pD_n^{\rate}(u)-\pD(u)} \leq
  \sup_{u\in[0,1]}\abs{\pD_n^{\rate}(u)-\pD_n^{\rate}(u_n)} +
  \sup_{u\in[0,1]}\abs{\pD_n^{\rate}(u_n)-\pD(u)} \leq \sup_{u\in[0,1]} K_n
  \abs{u-u_n}^{\kappa_n} + \sup_{x\in\IR}\abs{\Prob(M_{n} \le
    \an_{\ceil{\rate_n}}x+\bn_{\ceil{\rate_n}}) - \pD\circ\EVDist(x)} \leq K_n
  \bigl\{\sup_{x\in\IR}\abs{\pMargin^{\rate_n}(\an_{\ceil{\rate_n}}x$ $+
    \bn_{\ceil{\rate_n}})-\EVDist(x)}\bigr\}^{\kappa_n} + \gamma(n) \leq K
  \bigl\{\iidrate (\ceil{\rate_n}) +
  3\e^{-1}\I_{\{r_n\notin\IN\}}/\rate_n\bigr\}^{\kappa} + \gamma(n)$.  The
  assumptions on $\rate_n$, $\iidrate$ and $\gamma$ imply that the right-hand
  side converges to $0$ as $n\to \infty$.
\end{proof}

\section{Connections to and discussion of the results of \cite{Ballerini1994b}}
In contrast to \cite{Wuthrich2004} and the results in
Section~\ref{section_Archimedean_Archimax}, the results in \cite{Ballerini1994b}
are not derived based on a suitable regular variation condition of the
Archimedean generator (or its inverse).  Instead, comparable results are derived
based on the asymptotic polynomial growth condition
  \begin{align}
    \lim_{t\to\infty} t^{\rho}\{1-\gen(1/t)\} = c\label{eq:poly:growth}
  \end{align}
  for some $\rho\in(0,\infty)$ and $c\in(0,\infty)$; see
  \cite[Theorem~2]{Ballerini1994b}.  However this condition limits the scope of
  the derived results.  While this is already noted in
  \cite[p.~386]{Ballerini1994b}, the precise extent of this limitation has not
  been addressed in the literature.  In the following discussion, we show that
  the regular variation condition $1-\gen(1/\cdot) \in \RV_{-\rho}$ used in
  Section~\ref{section_Archimedean_Archimax} is strictly more general than the
  asymptotic polynomial growth condition \eqref{eq:poly:growth} of \cite{Ballerini1994b}.
  \begin{proposition}\label{prop_asymp_polynomial_growth}
    Let $f$ be a positive measurable function, defined on some neighbourhood $[S,\infty)$ of infinity, and $\rho \in \IR$ and
    $c > 0$ be two constants.  Then $f\in\RV_{-\rho}$ with associated slowly
    varying function $L$ such that
    $\lim_{x\to\infty} L(x) = c$ if and only if
    $\lim_{x\to\infty} x^{\rho} f(x) = c$.
  \end{proposition}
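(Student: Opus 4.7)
The plan is to unpack the definition of regular variation to see that both statements reduce to the same pointwise limit. Recall that a positive measurable function $f$ is in $\RV_{-\rho}$ precisely when it admits the representation $f(x) = x^{-\rho} L(x)$ for some slowly varying $L$, and the associated slowly varying factor can always be extracted as $L(x) = x^{\rho} f(x)$. With this observation the proposition becomes essentially tautological, so both implications will be one-step arguments.

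For the forward direction, I would assume $f \in \RV_{-\rho}$ with associated slowly varying $L$ satisfying $L(x) \to c$. By the representation $f(x) = x^{-\rho} L(x)$, multiplying through by $x^{\rho}$ immediately yields $x^{\rho} f(x) = L(x) \to c$ as $x \to \infty$.

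For the converse, I would suppose $x^{\rho} f(x) \to c$ and define $L(x) := x^{\rho} f(x)$ on $[S,\infty)$. Then $L$ is positive and measurable, and $L(x) \to c > 0$. The positivity of the limit is the only substantive ingredient: for every $t > 0$,
\begin{align*}
\lim_{x\to\infty} \frac{L(tx)}{L(x)} = \frac{c}{c} = 1,
\end{align*}
so $L$ is slowly varying. The identity $f(x) = x^{-\rho} L(x)$ then shows $f \in \RV_{-\rho}$ with associated slowly varying function $L$ satisfying $L(x) \to c$, which is precisely what was required.

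There is no real obstacle here; the only subtlety to flag is that the positivity $c > 0$ is essential in the converse, since otherwise the ratio $L(tx)/L(x)$ would be of indeterminate form and one could not conclude that $L$ is slowly varying from a limit alone. With $c > 0$ this is automatic, so the two directions together establish the stated equivalence.
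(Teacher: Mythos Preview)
Your proof is correct and follows essentially the same approach as the paper: both directions use the representation $f(x) = x^{-\rho} L(x)$ with $L(x) = x^{\rho} f(x)$, and the converse hinges on the observation that $x^{\rho} f(x) \to c > 0$ forces the relevant ratio to converge. The only cosmetic difference is that you define $L$ first and verify it is slowly varying, whereas the paper first checks $f(\lambda x)/f(x) \to \lambda^{-\rho}$ and then extracts $L$; these are the same computation up to a factor of $\lambda^{\rho}$.
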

  \begin{proof}
    For necessity, $f\in\RV_{-\rho}$ can be represented as
    $f(x) = x^{-\rho}L(x)$ for some slowly varying function $L$; see
    \cite[Theorem~1.4.1]{BinghamGoldieTeugels1987}.  By assumption, we
    have that
    $\lim_{x\to\infty} x^{\rho} f(x) = \lim_{x\to\infty} x^{\rho} x^{-\rho}L(x)
    = \lim_{x\to\infty}L(x) = c$. For sufficiency, if
    $\lim_{x\to\infty} x^{\rho}f(x) = c$, $c > 0$, we have by assumption that for any $\lambda>0$
    that, $\lim_{x\to\infty} \frac{f(\lambda x)}{f(x)} = \lim_{x\to\infty} \lambda^{-\rho}\frac{(x\lambda)^{\rho} f(\lambda x)}{x^{\rho}f(x)} = \lambda^{-\rho}$,
    that is $f \in \RV_{-\rho}$.  This implies that $f$ is necessarily of the
    form $f(x) = x^{-\rho}L(x)$, where $L$ is slowly varying; see
    \cite[Theorem~1.4.1]{BinghamGoldieTeugels1987}. Again by assumption, we then have that
    $c = \lim_{x\to\infty} x^{\rho}f(x) = \lim_{x\to\infty} L(x)$.
  \end{proof}
  By Proposition~\ref{prop_asymp_polynomial_growth}, and as already noted in Example~\ref{ex:Ballerini}, the class $\RV_{-\rho}$ is
  strictly larger than the set of all functions with
  $\lim_{x\to\infty} x^{\rho} f(x) = c > 0$, since slowly varying functions with
  finite non-zero limits are only a subset of all slowly varying functions; a
  simple example of a slowly varying function $L$ such that
  $\lim_{x\to\infty} L(x)$ does not exist is $L(x) = \log x$; see
  \cite[p.~16]{BinghamGoldieTeugels1987} for more examples.

  In the context of completely monotone Archimedean generators, the following
  lemma of \cite[Lemma~1]{Ballerini1994b} can be used to to derive a useful
  corollary for the constructing of an example where
  $1-\gen(1/\cdot) \in \RV_{-\rho}$ but where the asymptotic polynomial growth
  condition~\eqref{eq:poly:growth} is violated.
  \begin{lemma}[\cite{Ballerini1994b}, Lemma~1]\label{lemma_Ballerini}
    Denote by $\gen \colon (0,\infty)\to\IR$ a real valued function.  Suppose
    $\gen(t) \geq 0$ on $(0,\infty)$ and $\gen'(t) = -\kappa(t)\gen(t)$ for
    $\kappa\colon(0,\infty)\to\IR$ being completely monotone on $(0,\infty)$. Then
    $\gen$ is completely monotone on $(0,\infty)$.
  \end{lemma}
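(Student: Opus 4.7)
The plan is to establish $(-1)^k \gen^{(k)}(t) \ge 0$ for every $k \ge 0$ and $t \in (0,\infty)$ by induction on $k$, leveraging the product structure of the ODE $\gen' = -\kappa \gen$ together with the sign pattern of the derivatives of $\kappa$.

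First I would verify that $\gen$ is actually smooth on $(0,\infty)$, which is needed for the induction to make sense. Complete monotonicity of $\kappa$ ensures $\kappa \in C^\infty(0,\infty)$. Since $\gen$ is assumed differentiable, the identity $\gen' = -\kappa \gen$ immediately gives $\gen' \in C^1$, hence $\gen \in C^2$; iterating this bootstrapping argument yields $\gen \in C^\infty(0,\infty)$.

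Next I would handle the base cases $k=0$ and $k=1$. The inequality $(-1)^0 \gen(t) = \gen(t) \ge 0$ is the standing assumption. Complete monotonicity of $\kappa$ in particular gives $\kappa \ge 0$, so from $\gen \ge 0$ we get $\gen'(t) = -\kappa(t)\gen(t) \le 0$, i.e.\ $(-1)^1 \gen'(t) \ge 0$.

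For the inductive step, assume $(-1)^m \gen^{(m)}(t) \ge 0$ for every $0 \le m \le k$. Applying the Leibniz rule to the relation $\gen^{(k+1)} = (\gen')^{(k)} = -(\kappa \gen)^{(k)}$ gives
\begin{align*}
(-1)^{k+1}\gen^{(k+1)}(t) = \sum_{j=0}^k \binom{k}{j}\bigl[(-1)^j \kappa^{(j)}(t)\bigr]\bigl[(-1)^{k-j}\gen^{(k-j)}(t)\bigr],
\end{align*}
where I have used $(-1)^{k+2} = (-1)^j(-1)^{k-j}$. The first bracketed factor is non-negative by complete monotonicity of $\kappa$, the second by the inductive hypothesis, and binomial coefficients are non-negative, so the whole sum is non-negative. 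This closes the induction. The only genuinely delicate point is the smoothness bootstrap at the start; once that is in place, the rest is a bookkeeping argument about signs, and I do not anticipate any obstacle.
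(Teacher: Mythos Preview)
Your proof is correct. The paper does not actually supply its own proof of this lemma; it is quoted verbatim from \cite{Ballerini1994b} and used as a black box in the subsequent corollary. Your argument --- bootstrapping smoothness from the ODE and then running a Leibniz-rule induction on the sign of $(-1)^k\gen^{(k)}$ --- is the standard way to establish this result and is exactly what one would expect the original reference to contain.
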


  Lemma~\ref{lemma_Ballerini} implies the following method for constructing
  completely monotone Archimedean generators $\gen$ with given regular variation
  properties of $1-\gen(1/\cdot)$.
  \begin{corollary}\label{corollary_comp_mon_generator}
    Let $f \colon [0,\infty)\to\IR$ be a differentiable function.
    \begin{enumerate}[label=(\roman*), labelwidth=\widthof{(iii)}]
    \item Then it holds that $f(0)=0$, $\lim_{t\to\infty}f(t)=\infty$ and that the derivative
      $f'\colon (0,\infty)\to\IR$ is completely monotone if and only if
      $\gen(t) = \exp\{-f(t)\}$ is a completely monotone generator of an
      Archimedean copula such that $\gen^{\alpha}$ is completely monotone for
      all $\alpha\in(0,\infty)$.
    \item Consider a completely monotone Archimedean generator of the form
      $\gen(t) = \exp\{-f(t)\}$. Then the mean of the associated frailty
      distribution is given by $\lim_{t\to 0}f'(t)$. If, additionally,
      $f'(1/\cdot)\in\RV_{\beta}$, $\beta\in\IR$, then
      $1-\gen(1/\cdot)\in\RV_{-(1-\beta)}$.
    \end{enumerate}
    \vspace{-\baselineskip}%
  \end{corollary}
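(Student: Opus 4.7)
The plan is to treat the two parts separately. Part~(i) is essentially the standard Bernstein function characterization, derivable here directly from Lemma~\ref{lemma_Ballerini}. Part~(ii) splits into two: the mean of the frailty distribution follows from the Bernstein--Widder representation, while the regular variation claim follows from combining the asymptotic $1-e^{-y}\sim y$ as $y\to 0$ with Karamata's theorem on integrals of regularly varying functions.

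For the forward direction of part~(i), given $f(0)=0$, $\lim_{t\to\infty}f(t)=\infty$, and $f'$ completely monotone, I would set $\psi_\alpha(t) = \exp\{-\alpha f(t)\}$ for any $\alpha > 0$. Since $\psi_\alpha \geq 0$ and $\psi_\alpha'(t) = -\alpha f'(t)\psi_\alpha(t)$ with $\alpha f'$ completely monotone, Lemma~\ref{lemma_Ballerini} yields that $\psi_\alpha$ is completely monotone on $(0,\infty)$. Specializing to $\alpha = 1$, the boundary conditions on $f$ make $\psi$ a bona fide completely monotone generator in the sense of Definition~\ref{def:4.1}. For the converse, $\psi(0)=1$ and $\lim_{t\to\infty}\psi(t) = 0$ immediately yield the boundary conditions on $f$, while the complete monotonicity of $f'$ follows from the Bernstein function characterization: a non-negative $g$ on $[0,\infty)$ is a Bernstein function if and only if $\exp(-\alpha g)$ is completely monotone for every $\alpha > 0$, and every Bernstein function has a completely monotone derivative.

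For part~(ii), the Bernstein--Widder theorem provides a positive frailty $V$ with $\psi(t) = \E[e^{-tV}]$. Differentiating under the expectation yields $\E[V] = -\psi'(0^+)$; the chain rule gives $\psi'(0^+) = -f'(0^+)\psi(0) = -f'(0^+)$, and therefore $\E[V] = \lim_{t\to 0}f'(t)$. For the regular variation claim, $f(0)=0$ combined with continuity of $f$ at $0$ yields $f(1/x)\to 0$ as $x\to\infty$, so that $1 - \psi(1/x) = 1 - \exp\{-f(1/x)\} \sim f(1/x)$. Writing $f(1/x) = \int_0^{1/x} f'(s)\,\d s$ and substituting $s=1/u$ gives $f(1/x) = \int_x^\infty f'(1/u)u^{-2}\,\d u$. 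The integrand lies in $\RV_{\beta-2}$, and since $\beta<1$ (see below), Karamata's theorem yields $f(1/x) \sim f'(1/x)/\{x(1-\beta)\}$, which belongs to $\RV_{\beta-1}=\RV_{-(1-\beta)}$. Hence $1-\psi(1/\cdot) \in \RV_{-(1-\beta)}$.

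The main subtlety is the implicit constraint $\beta < 1$ in part~(ii). A completely monotone $f'$ with $f'(1/\cdot) \in \RV_\beta$ for $\beta \geq 1$ would force $f'(t)$ to be of order $1/t$ or larger near $0$, making $\int_0^\epsilon f'(s)\,\d s$ diverge and contradicting $f(0)=0$; thus the hypothesis inherited from part~(i) automatically confines us to the range $\beta<1$, where Karamata's theorem applies cleanly. The most delicate single step is the Bernstein function characterization invoked in the reverse direction of part~(i); a fully self-contained alternative is to iteratively differentiate $\exp\{-\alpha f\}$ and use the sign conditions inherited from the complete monotonicity of each $\psi^\alpha$ to recover the sign of every derivative of $f'$.
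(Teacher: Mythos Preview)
Your treatment of part~(i) and of the frailty mean in part~(ii) matches the paper's: the paper likewise uses Lemma~\ref{lemma_Ballerini} for the forward direction and invokes the Bernstein-function equivalence (via \cite[Proposition~2.1.5\,(5)]{Hofert2010}) for the converse; your idea of applying Lemma~\ref{lemma_Ballerini} directly to $\psi^\alpha=\exp\{-\alpha f\}$ is a small streamlining. For the regular-variation claim in part~(ii), however, the paper takes a different and somewhat cleaner route: it applies L'H\^opital's rule directly to the ratio $\{1-\psi(1/(\lambda t))\}/\{1-\psi(1/t)\}$, writes $\psi'=-\psi f'$, uses $\psi(0)=1$, and reduces the limit to $\lambda^{-1}\lim_{t\to\infty} f'\{1/(\lambda t)\}/f'(1/t)=\lambda^{\beta-1}$. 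This works uniformly for all $\beta\in\IR$ and avoids any integration. Your Karamata route is a valid alternative, but it requires $\beta-2<-1$; your claim that $\beta\geq 1$ is automatically excluded by $f(0)=0$ is correct for $\beta>1$ yet has a small gap at $\beta=1$, since $\int^\infty u^{-1}L(u)\d u$ can converge for suitable slowly varying $L$. The boundary version of Karamata would still give $f(1/\cdot)\in\RV_0$ there, so your conclusion survives, but the L'H\^opital argument sidesteps the case distinction entirely.
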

  \begin{proof}\mbox{}
    \begin{enumerate}[label=(\roman*), labelwidth=\widthof{(iii)}]
    \item Consider sufficiency. If $\gen(t) = \exp\{-f(t)\}$, we have
      $f(t) = -\log \gen(t)$, which directly yields $f(0)=0$ and
      $\lim_{t\to\infty}f(t)=\infty$. For the derivative we have
      $f'(t) = -\gen'(t)/\gen(t)=\{-\log\gen(t)\}'$. By
      \cite[Proposition~2.1.5\,(5)]{Hofert2010}, $f'$ is completely monotone
      if and only if $\gen^{\alpha}$ is completely monotone
      for all $\alpha\in(0,\infty)$, which holds by assumption.

      Now consider necessity. By construction, we have
      $\gen(0) = 1$ and $\lim_{t\to\infty}\gen(t) = 0$.  Furthermore, we have
      $\gen'(t) = -\exp\{-f(t)\}f'(t) = -\gen(t)f'(t)$. Since, by assumption, $f'$ is
      completely monotone, Lemma~\ref{lemma_Ballerini} guarantees that $\gen$ is
      completely monotone.  By \cite{Kimberling1974}, $\gen$ is therefore the
      generator of an Archimedean copula.
    \item The mean of the associated frailty distribution is given by
      $\lim_{t\to 0}-\gen'(t) = \lim_{t\to 0}\gen(t)$
      $\cdot f'(t) = \lim_{t\to 0}f'(t)$ since $\gen(0)=1$. For the regular
      variation property, we first compute
      $\frac{\d}{\d t}\gen\{1/(\lambda t)\} = -\gen'\{1/(\lambda
      t)\}t^{-2}\lambda^{-1}$.  Using L'Hôpital's rule and $\gen(0)=1$, we have
      $\lim_{t\to\infty}\frac{1-\gen\{1/(\lambda t)\}}{1-\gen(1/t)} =
      \frac{1}{\lambda}\lim_{t\to\infty}\frac{\gen'\{1/(\lambda t)\}}{\gen'(1/t)}
      = \frac{1}{\lambda}\lim_{t\to\infty}\frac{\gen\{1/(\lambda t)\}f'\{1/(\lambda
        t)\}}{\gen(1/t)f'(1/t)} =$ $\frac{1}{\lambda}\lim_{t\to\infty}\frac{f'\{1/(\lambda t)\}}{f'(1/t)}$ for $\lambda>0$.  By
      assumption, $f'(1/\cdot)\in\RV_{\beta}$ and so
      $\lim_{t\to\infty}\frac{1-\gen\{1/(\lambda t)\}}{1-\gen(1/t)}$ $=
      \lambda^{-1}\lambda^{\beta} = \lambda^{-1+\beta}$, that is
      $1-\gen(1/\cdot)\in\RV_{-(1-\beta)}$.\qedhere
    \end{enumerate}
  \end{proof}

  We now use Corollary~\ref{corollary_comp_mon_generator} in the following
  example to construct a regularly varying Archimedean generator which is not
  covered by the framework of \cite{Ballerini1994b}.
  We therefore close a gap in the literature since the proposed construction of
  \cite[Example~3]{Ballerini1994b} based on $\gen_{\beta}(t) = \exp\bigl[-\{\log(1+t)\}^{1/\beta}\bigr],\quad \beta \geq 1$,
  actually yields (via L'Hôspital's rule) that
  $\lim_{t\to\infty} t^{1/\beta}\{1-\gen(1/t)\} = 1$ and not that
  $\lim_{t\to\infty} t^{1/\beta}\{1-\gen(1/t)\} = 0$ as claimed.
  \begin{example}\label{example_counter_Ballerini}
    Consider the function $f(x)= (1 + x) \log(1 + 1/x) + \log(x)$,
    $x\in(0,\infty)$, which satisfies $f(x)\geq 0$, $\lim_{x\to 0}f(x) = 0$,
    $\lim_{x\to\infty}f(x)=\infty$ and $f'(x) = \log(1+1/x)$. Also note that
    $f'$ is completely monotone. %
    By Corollary~\ref{corollary_comp_mon_generator}, the function
    $\gen(t) = \exp\{-f(t)\} = 1/\{t(1+1/t)^{1+t}\}$ is therefore a completely
    monotone Archimedean generator. Furthermore, we have
    $f'(1/\cdot) \in \RV_{0}$, that is $f'(1/\cdot)$ is slowly varying; this can
    be seen from L'Hôpital's rule via
    $\lim_{x\to\infty} \frac{f'\{1/(\lambda x)\}}{f'(1/x)} = \lim_{x\to\infty}
    \frac{\log(1+\lambda x)}{\log(1+x)} = \lambda \lim_{x\to\infty}
    \frac{1+x}{1+\lambda x} = 1$.  By
    Corollary~\ref{corollary_comp_mon_generator},
    $1-\gen(1/\cdot)\in\RV_{-1}$. While it is easily seen that
    $-\gen'(0)<\infty$ implies $\rho=1$, it is interesting to note that here we
    have $-\gen'(0) = f'(0) = \infty$ even though $\rho=1$, demonstrating that
    the converse implication is not true in general; see also
    Table~\ref{table_generator_limit} and the example provided by Family~23 of \cite[Table~1]{Charpentier/Segers:2009}.
    More importantly, considering the limit
    condition in \cite{Ballerini1994b}, we have that
    $\lim_{t\to\infty} t^{\beta}\{1-\gen(1/t)\} \in \{0, \infty\}$ for
    $\beta\neq 1$, and, for $\beta=1$ via L'Hôpital's rule, that
    $\lim_{t\to\infty} t\{1-\gen(1/t)\} = \lim_{t\to\infty}
    \frac{1}{t^{-1}}\{1 - \frac{t}{(1+t)^{1+1/t}}\} =
    \lim_{t\to\infty} \frac{-(1 + t)^{-1 - 1/t} \log(1 + t)/t}{-t^{-2}} =
    \lim_{t\to\infty} \frac{t\log(1 + t)}{(1 + t)^{1 + 1/t} } = \infty$.  Hence
    the generator $\gen(t)=1/\{t(1+1/t)^{1+t}\}$ does not satisfy the asymptotic
    polynomial growth condition~\eqref{eq:poly:growth} of the framework of
    \cite{Ballerini1994b}.
  \end{example}

  As a final step we can compare the limiting distributions from
  \cite{Ballerini1994b} with ours from
  Theorem~\ref{theorem_Archimedean_distortion_limit}. In contrast to
  Theorem~\ref{theorem_Archimedean_distortion_limit}, the limiting distributions in
  \cite[Equation~(2.2)]{Ballerini1994b} reference a positive parameter $c>0$
  linked to the asymptotic polynomial growth condition~\eqref{eq:poly:growth}, where $\rho$ is the
  coefficient of regular variation of $1-\gen(1/\cdot)$. The following remark
  shows that in our more general framework, this dependence is in fact an
  artifact that can be interpreted as a rescaling of the copula generator $\gen$
  without an effect on the final dependence structure. We illustrate this point
  with a detailed example in the remaining part of this supplement.
  \begin{remark}\label{remark_invariance_generator_scaling}
    An Archimedean generator $\gen_1$ is not uniquely identified in the sense
    that the generator $\gen_2(t)=\gen_1(ct)$ generates the same copula for all
    $c>0$. Our results in Theorem~\ref{theorem_Archimedean_distortion_limit}
    about the limiting distribution of $M_n$ in the Archimax case should
    therefore not depend on the rescaling factor $c$. However, following the
    same steps as in Theorem~\ref{theorem_Archimedean_distortion_limit}, it
    seems counter-intuitive at first that the limiting distribution function
    $\pD$ depends on $c$. Specifically, for $1-\gen_1(1/\cdot) \in \RV_{-\rho}$,
    consider $\gen_2(t)=\gen_1(\tilde{c}t)$ with $\tilde{c}=c^{-1/\rho}$, where
    the power $-1/\rho$ of $c$ allows us to connect our discussion to the
    results derived in \cite{Ballerini1994b}.  Verifying that then also
    $1-\gen_2(1/\cdot)\in\RV_{-\rho}$, it follows that
    $\lim_{n\to\infty}\pD^{\rate}_n(u) = \lim_{n\to\infty}\gen_2\{\STDFone_n\invgen_2(u^{1/\rate_n})\}
    =\gen_1\{\tilde{c}(-\log u)^{1/\rho}\} = \gen_1\{c^{-1/\rho}(-\log
    u)^{1/\rho}\}$, where the limit on the right-hand side is
    equal to the distortion implicitly derived in the Archimedean case in
    \cite[Equation~(2.2)]{Ballerini1994b}. To reconcile the fact that $\gen_1$
    leads to the limit $\pD_1(u)=\gen_1\{(-\log u)^{1/\rho}\}$, while $\gen_2$
    leads to the limit
    $\pD_2(u)=\gen_1\{c^{-1/\rho}(-\log u)^{1/\rho}\}=\gen_1\{(-\log
    u^{1/c})^{1/\rho}\}$, it is important to remember that in the Archimedean
    case, the stabilizing constants $(\cn_n)$ and $(\dn_n)$ depend on the
    underlying generator. This leads to different pairs of stabilizing sequences
    for $M_n$ for $\gen_1$ and $\gen_2$, and hence implies by the convergence to
    types theorem, see \cite[Proposition~0.2]{Resnick1987}, that the limiting
    distributions are related by an affine transformation.  That this is indeed
    the case can be seen from the fact that the two limits differ by a power
    transformation of the argument, that is $\pD_1(u)=\pD_2(u^c)$. In case of a
    max-stable distribution as argument, this indeed leads to an affine
    shift. As discussed in Remark~\ref{rem_rate_uniqueness}, for a max-stable
    distribution function $\EVDist$ the constants $a>0$ and $b\in\IR$ such that
    $\EVDist^{c}(x) = \EVDist(ax+b)$ can be calculated explicitly, which hence
    precisely determines the impact of using stabilizing sequences connected to
    either $\gen_1$ or $\gen_2$. Via the convergence to types theorem, also
    their relation to each other can be determined.
  \end{remark}

  The following example illustrates this point.
  \begin{example}
    Denote by $\pMargin_{\lambda}(x) = 1-\e^{-\lambda x}$, $x\ge 0$, the
    distribution function of an exponential distribution with parameter
    $\lambda>0$. Then possible iid stabilizing constants are $\an_n = 1/\lambda$
    and $\bn_n = \log(n)/\lambda$, leading to
    $\lim_{n\to\infty} \pMargin_{\lambda}^n\left(\an_n x + \bn_n\right) =
    \e^{-\e^{-x}} = \Lambda(x)$, that is $\pMargin_{\lambda} \in \MDA(\Lambda)$.
    Now consider a sequence $(X_i)$ where the margins are exponentially
    distributed and the copula $\Copula_{\theta}$ of $(X_1,\ldots,X_n)$ is a
    Clayton copula with parameter $\theta > 0$.  In this case the generator
    takes the form $\gen(t) = \left(1+t\right)^{-1/\theta}$, and, for $c>0$, the
    scale-transformed version is given by
    $\gen_c(t) = \left(1+ct\right)^{-1/\theta}$.  In this case we have
    $\rho = 1$ for $\gen$ and $\gen_c$.  At the same time, we have
    $-\gen_c'(0) = -c\gen'(0) = c/\theta < \infty$, and hence, as discussed in
    Example~\ref{Example_Archimax_no_tail_dep}, we can use the iid stabilizing
    constants $(\an_n)$ and $(\bn_n)$ to obtain
    $\lim_{n\to\infty} \Prob\bigl(\frac{M_n - \bn_{n}}{\an_{n}} \leq x\bigr) =
    \gen_c\{-\log \Lambda^{-1/\gen_c'(0)}(x)\} = \gen\{-c \log
    \Lambda^{\theta/c}(x)\} = \gen\{-\log \Lambda^{\theta}(x)\}$, where, as
    expected, the result is invariant with respect to the value of $c>0$ since
    $\gen$ and $\gen_c$ induce the same copula $\Copula_{\theta}$.

    However, when using the stabilization implied by
    Theorem~\ref{theorem_Archimedean_distortion_limit}, the result seemingly
    depends on the value of $c>0$. In this case, we have for
    $\rate_n = 1-\gen_c(1/n)$ the asymptotic expansion
    $\rate_n = \frac{1}{1-\gen_c(1/n)} = \frac{\theta n}{c} + \frac{\theta+1}{2}
    - \frac{c(\theta^2-1)}{12\theta n} + \mathcal{O}\left(n^{-2}\right)$, which,
    for large $n$, suggests the approximation $\rate_n \approx \theta n/c$.
    With $\ceil{\rate_n} = \ceil{n\theta/c} = c_n(n\theta/c)$, where
    $c_n = \ceil{n\theta/c} c / (n\theta) \to 1$, this leads to the (asymptotic)
    stabilizing constants $\an_{\ceil{\rate_n}}= 1/\lambda$ and
    $\bn_{\ceil{\rate_n}} = \log(\ceil{\rate_n})/\lambda = \log(n)/\lambda +
    \log(\theta)/\lambda - \log(c)/\lambda + \log(c_n)/\lambda$.  By
    Theorem~\ref{theorem_Archimedean_distortion_limit}, we then have for
    $x \in \IR$ the weak limit
    $\lim_{n\to\infty} \Prob\bigl(\frac{M_n -
        \bn_{\ceil{\rate_n}}}{\an_{\ceil{\rate_n}}} \leq x\bigr) = \gen_c
    \{-\log \Lambda(x)\} = \gen\{-\log \Lambda^c(x)\}$.  The
    key to understanding the dependence on $c$ is to realize that in this case
    the stabilizing constants $\an_{\ceil{\rate_n}}$ and $\bn_{\ceil{\rate_n}}$
    also depend on $c>0$.  As such, the appearance of $c>0$ in the limit is a
    consequence of the convergence to types theorem (see
    \cite[Proposition~0.2]{Resnick1987}): First noting that
    $\Lambda^c(x) = \Lambda(x - \log c)$, we also have
    $\an_{\ceil{\rate_n}} / \an_{n} = 1$ and
    $(\bn_{\ceil{\rate_n}} - \bn_{n})/ \an_{n} \to \log(\theta) - \log(c)$.  The
    two different limits are now connected via
    $\gen\bigl[-\log \Lambda^{\theta}\{x + \log(\theta) - \log(c)\}\bigr] =
    \gen\bigl[-\log \Lambda\{x - \log(c)\}\bigr] = \gen\{-\log
    \Lambda^{c}(x)\}$, and the appearance of $c>0$ in the second limit can
    indeed be attributed to the utilized stabilizing constants.
  \end{example}

\printbibliography[heading=bibintoc]
\end{document}

%
%
%
%
